\newcommand{\NN}{\mathbbm{N}}
\newcommand{\RR}{\mathbbm{R}}
\newcommand{\uv}{\mathbf{u}}
\DeclareMathOperator{\im}{im}
\newtheorem{theorem}{Theorem}
\newtheorem{proposition}[theorem]{Proposition}
\newtheorem{lemma}[theorem]{Lemma}
\newtheorem{corollary}[theorem]{Corollary}
\theoremstyle{definition}
\newtheorem{definition}[theorem]{Definition}
\newtheorem{example}[theorem]{Example}
\theoremstyle{remark}
\newtheorem{remark}[theorem]{Remark}
\numberwithin{equation}{section}
\begin{document}

\begin{frontmatter}

\title{Singular Initial Value
  Problems for Scalar Quasi-Linear Ordinary Differential Equations} 
\author[kassel]{Werner M. Seiler}
\ead{seiler@mathematik.uni-kassel.de}
\author[kassel]{Matthias Sei\ss}
\ead{mseiss@mathematik.uni-kassel.de}
\address[kassel]{Institut f\"ur Mathematik, Universit\"at Kassel, 34132 Kassel,
  Germany}

\begin{abstract}
  We discuss existence, non-uniqueness and regularity of one- and two-sided
  solutions of initial value problems for scalar quasi-linear ordinary
  differential equations where the initial condition corresponds to an
  impasse point of the equation.  With a differential geometric approach,
  we reduce the problem to questions in dynamical systems theory.  As an
  application, we discuss in detail second-order equations of the form
  $g(x)u''=f(x,u,u')$ with an initial condition imposed at a simple zero of
  $g$.  This generalises results by Liang and also makes them more
  transparent via our geometric approach.
\end{abstract}

\begin{keyword}
  implicit differential equations \sep singular initial value problems \sep
  Vessiot distribution \sep non-uniqueness of solutions
  \MSC[2010] 34A09 \sep 34A12 \sep 34A26
\end{keyword}

\end{frontmatter}

\section{Introduction}

In this work, we are concerned with initial value problems for scalar
\emph{implicit} ordinary differential equations
\begin{equation}\label{eq:implode}
  F(x,u,u_{1},\dots,u_{q})=0
\end{equation}
where $u_{i}$ denotes the $i$th derivative of the unknown real-valued
function $u(x)$ (for convenience we identify $u_{0}=u$).  Initial data
consist of a point
$(\bar x,\bar u,\bar{u}_{1},\dots,\bar{u}_{q})\in\RR^{q+2}$ on which $F$
vanishes.  We call such an initial value problem \emph{singular}, if the
derivative $F_{u_{q}}(\bar x,\bar u,\bar{u}_{1},\dots,\bar{u}_{q})$
vanishes implying that around our initial point \eqref{eq:implode} cannot
be solved for the highest derivative $u_{q}$ and thus that standard
existence and uniqueness theorems do not apply.

In the first part of this article (Sections
\ref{sec:geoode}--\ref{sec:prol}), we will recall the relevant structures
to study equations like \eqref{eq:implode} from a geometric point of view
and define what we mean by \emph{(geometric) singularities} of a
differential equation.  The basic idea of our approach is to associate with
\eqref{eq:implode} a vector field on a submanifold of a jet bundle such
that its integral curves correspond to (prolonged) solutions.  This idea
leads naturally to a generalised notion of solutions -- \emph{geometric
  solutions} -- which do not necessarily represent the graph of a function,
but which can be understood as a concatenation of graphs and thus are very
useful for a solution theory at singularities.

The main emphasis of this article will be on the special case of
\emph{quasi-linear} equations of the form
\begin{equation}\label{eq:qlode}
  g(x,u,u_{1},\dots,u_{q-1})u_{q}=f(x,u,u_{1},\dots,u_{q-1})\,.
\end{equation}
Here it suffices to provide an arbitrary point
$(\bar x,\bar u,\bar{u}_{1},\dots,\bar{u}_{q-1})\in\RR^{q+1}$ as initial
data.  The corresponding initial value problem is singular, if the function
$g$ vanishes at this point.  Many classical equations in mathematical
physics are of this form for $q=2$.  They are usually written in explicit
form with a rational right hand side.  For the kind of problems studied by
us, it is, however, better to use the implicit representation.  We will
always assume that \eqref{eq:qlode} is in reduced form, i.\,e.\ that the
functions $f$ and $g$ do not have a non-trivial common factor.
Furthermore, we will assume that \eqref{eq:qlode} does not admit singular
integrals which is equivalent to the overdetermined system of differential
equations $f(x,u,u_{1},\dots,u_{q-1})=g(x,u,u_{1},\dots,u_{q-1})=0$ being
inconsistent and thus not possessing any solutions.

The second part of this article (Sections
\ref{sec:impasse}--\ref{sec:propimp}) is concerned with adapting the
geometric approach outlined in the first part to quasi-linear equations.
Geometric singularities of differential equations can be understood as a
special case of the theory of singularities of smooth maps between two
manifolds \cite{agv:sing1,gg:stable}.  The main emphasis in this theory has
been on classification problems for generic implicit equations of low order
(see e.\,g.\ \cite{ld:singgen,diis:gensing}).  Since quasi-linear equations
are not generic, they are not covered by these works.  By contrast, in the
context of the theory of differential algebraic equations, essentially only
the quasi-linear case has been considered (see e.\,g.\
\cite{rr:dae,rr:das,gt:singqldae,jt:sing}), but the relation to singularity
theory has not been explored.  We will show that in the case of a
quasi-linear equation the relevant geometric structure can be projected to
a lower order (this has already been noted in \cite{wms:singbif}).  This
fact leads to new phenomena not present in general implicit equations.

Our geometric approach allows us the reduce the problem essentially to the
analysis of a stationary point of a vector field and thus to a classical
question in dynamical systems theory.  However, the situation is not so
trivial, as for $q\geq2$ the arising stationary points can never be
hyperbolic.  Only for planar vector fields a fairly extensive qualitative
theory exists of the behaviour around non-hyperbolic points (see e.\,g.\
\cite{dla:qualplan}).  We will therefore concentrate in this article on
situations where at most two-dimensional centre manifolds can arise.

As a concrete demonstration of the power of the geometric approach, we will
provide in the third and final part of the article (Section
\ref{sec:soivp}) an essentially complete analysis of second-order initial
value problems of the particular form
\begin{equation}\label{eq:liang}
  g(x)u''=f(x,u,u')\,,\qquad u(y)=c_{0}\,,\quad u'(y)=c_{1}
\end{equation}
where $y$ is a simple zero of the function $g$.  For the special case
$g(x)=x$, this problem has already been studied by Liang \cite{jfl:singivp}
with classical analytical methods.  It seems to us that it is not
straightforward to extend his results to more general functions $g$ and
that the analytic approach requires a certain amount of ingenuity to guess,
say, the right integrating factor and estimates.  Furthermore, the analytic
proofs cannot really explain why in this problem a certain dichotomy and
resonances appear.  By contrast, in our geometric point of view everything
arises in a completely natural and transparent manner and it will turn out
that the key parameters are nothing but eigenvalues of Jacobians at
stationary points.

As we aim at answering for our initial value problems the standard
analytical questions of existence, (non-)uniqueness and regularity of
solutions, our study will be within the smooth category, i.\,e.\ we assume
that the functions $F$ and $f$, $g$, respectively, are smooth and we search
primarily for smooth solutions, although it will turn out that sometimes
only solutions of lower regularity exist.  In the Conclusions we will
comment on the extension of our results to the case of equations of finite
regularity $\mathcal{C}^{r}$ which is possible.  Furthermore, we will
distinguish between one-sided solutions which are required to exist only on
intervals of the form $(a,\bar x]$ or $[\bar x, b)$ and two-sided solutions
where $\bar x$ is required to be an interior point of the existence
interval.  In the theory of explicit systems, such a distinction is rarely
made, as typical existence results like the theorem of Picard-Lindel\"of
automatically provide solutions existing on both sides of the initial
point.  However, we will see that for implicit systems there are usually
less two-sided solutions than one-sided ones.

For the special case of analytic equations, there also exists a long
tradition of applying algebraic techniques to such problems like
Newton-Puiseux polygons and similar constructions.  This goes back at least
to Fine \cite{hbf:puiseux,hbf:singsol}; more modern references are e.\,g.\
\cite{bruno:local,jc:puiseux,sf:diss}.  The main thrust of these works is
the construction of explicit solutions in form of Puiseux series and a
discussion of their convergence.  Such results are surely of great
interest, not least because of their algorithmic character.  However, they
concern only a narrower class of equations and -- more importantly -- of
solutions.  In particular, one-sided solutions cannot be described by
series, but are characteristic for certain types of singularities.  In this
article, we therefore rely exclusively on methods from geometry and
dynamical systems theory with the consequence that we obtain only
non-constructive existence results.  Combining the algebraic and geometric
approaches is an interesting task for future works.  In \cite{lrss:gsade},
where the here used notions of singularities were extended to general
systems of ordinary and partial differential equations, we demonstrated
already that such a combination can be very powerful.

This article is structured as follows.  In the next section, we briefly
recall some basics of the geometric theory of ordinary differential
equations and in particular discuss our key tool, the Vessiot spaces.
Section~\ref{sec:gensol} introduces two generalised notions of solutions:
generalised solutions are curves in a jet bundle and geometric solutions
are their projections to the base space.  Statements about the regularity
of solutions require the analysis of prolongations of the given equation.
In Section~\ref{sec:prol}, we will in particular study how singularities
behave under prolongation.  The next section specialises to quasi-linear
equations.  We will demonstrate that their analysis can be performed one
order lower which leads to the notion of impasse points.  We will show that
impasse points do not necessarily come from singularities, an observation
which entails that quasi-linear equations indeed require their own theory.
In Section~\ref{sec:weaksol}, we introduce and study weak generalised
solution as a form of generalised solutions adapted to quasi-linear
equations.  Section~\ref{sec:propimp} describes our approach of reducing
the local solution behaviour around a proper impasse point to the analysis
of a stationary point of a dynamical system.  Finally, we apply in the
following section all the developed tools to the analysis of the initial
value problem \eqref{eq:liang}.

\section{The Geometry of Ordinary Differential Equations}
\label{sec:geoode}

In the geometric theory of differential equations \cite{sau:jet,wms:invol},
(systems of) differential equations are represented by an intrinsic object,
a fibred submanifold of the appropriate jet bundle.  The $q$-jet of a
smooth function\footnote{For notational simplicity, we will use throughout
  a global notation, although all our results are of a local nature.  Thus
  strictly speaking, $\phi$ is only defined on some open subset of $\RR$
  which we, however, suppress.}  $\phi:\RR\rightarrow\RR$ at a point
$x\in\RR$ is the equivalence class $[\phi]^{(q)}_{x}$ of all smooth
functions which have at $x$ the same Taylor expansion up to order $q$ as
$\phi$ and can be identified with the corresponding Taylor polynomial.  The
$q$th order \emph{jet bundle} $\mathcal{J}_{q}=\mathcal{J}_{q}(\RR,\RR)$
consists of all such $q$-jets and defines an $(q+2)$-dimensional manifold.
We identify $\mathcal{J}_{0}=\RR^{1+1}$ with the space of the independent
variable $x$ and the dependent variable $u$.  By the theorem of Taylor,
coordinates on $\mathcal{J}_{q}$ are given by
$(x,u,u',\dots,u^{(q)})=(x,\uv_{(q)})$ where $u^{(q)}$ denotes the
derivatives of order $q$ and $\uv_{(q)}$ the collection of all derivatives
from order $0$ up to $q$.  For orders $q>r$, there are natural projection
maps $\pi^{q}_{r}:\mathcal{J}_{q}\rightarrow\mathcal{J}_{r}$ between the
corresponding jet bundles where simply the higher derivatives are
``forgotten''.  In addition, we have the projection
$\pi^{q}:\mathcal{J}_{q}\rightarrow\RR$ to the base space where everything
except the expansion point $x$ is ``forgotten''.  We define a \emph{scalar
  differential equation} as a hypersurface
$\mathcal{R}_{q}\subseteq\mathcal{J}_{q}$ such that
$\pi^{q}(\mathcal{R}_{q})$ lies dense in $\RR$.  In the classical geometric
theory, one requires that the restriction of $\pi^{q}$ to $\mathcal{R}_{q}$
defines a surjective submersion.  However, this condition excludes the
appearance of any kind of singularity.  We will therefore use our relaxed
condition which still suffices to ensure that $x$ is indeed an independent
variable.

\begin{remark}\label{rem:algsing}
  In practice, the set $\mathcal{R}_{q}$ is given as the zero set of some
  smooth function $F:\mathcal{J}_{q}\rightarrow\RR$.  Even if we assume for
  simplicity that this function is analytic, we must expect that
  $\mathcal{R}_{q}$ is not a manifold, but only an analytic variety which
  may possess singularities in the sense of analytic geometry.  We call
  such points \emph{algebraic singularities} in contrast to the
  \emph{geometric singularities} which are the topic of this article.  As
  currently not much is known about the local solution behaviour around
  algebraic singularities (see the recent works \cite{sf:diss,ss:casc20}
  for some results), we will ignore them here.  Thus, strictly speaking, we
  do not work with the whole variety $\mathcal{R}_{q}$, but always restrict
  to its smooth part (which we call again $\mathcal{R}_{q}$).  In concrete
  examples, we will ensure that we study only smooth points.
\end{remark}

A very important geometric structure on the jet bundle $\mathcal{J}_{q}$
for $q\geq1$ is provided by the \emph{contact distribution}
$\mathcal{C}^{(q)}\subset T\mathcal{J}_{q}$ which encodes geometrically the
chain rule and thus the different roles played by the various jet
variables.  In our case of scalar ordinary differential equations, it is
spanned by two vector fields: a transversal one
\begin{equation}\label{eq:Ctrans}
  C^{(q)}=\partial_{x}+u^{(1)}\partial_{u}+\cdots+
                      u^{(q)}\partial_{u^{(q-1)}}
\end{equation}
and a vertical one (with respect to the fibration $\pi^{q}$ to the base
space)
\begin{equation}\label{eq:Cvert}
  C_{q}=\partial_{u^{(q)}}\,.
\end{equation}
To avoid case distinctions, we set $C^{(0)}=\partial_{x}$ and
$C_{0}=\partial_{u}$.  By abuse of notation, we use the vector fields
$C^{(q)}$ and $C_{q}$ on any jet bundle $\mathcal{J}_{r}$ with $r\geq q$
without writing out the needed pull-back.

Given a differential equation $\mathcal{R}_{q}\subseteq\mathcal{J}_{q}$ and
a (smooth) point $\rho=(\bar x,\bar{\uv}_{(q)})\in\mathcal{R}_{q}$ on it,
that part of the contact distribution which is tangential to
$\mathcal{R}_{q}$ is the \emph{Vessiot space}
$\mathcal{V}_{\rho}[\mathcal{R}_{q}]=
T_{\rho}\mathcal{R}_{q}\cap\mathcal{C}^{(q)}|_{\rho}$ at $\rho$.  We will
see below that the elements of the Vessiot space may be interpreted as
infinitesimal solutions (or integral elements in the language of Cartan).
The family of all Vessiot spaces is called the \emph{Vessiot distribution}
$\mathcal{V}[\mathcal{R}_{q}]$ of $\mathcal{R}_{q}$, although in general
$\mathcal{V}[\mathcal{R}_{q}]$ defines a regular smooth distribution only
on an open subset of $\mathcal{R}_{q}$.

Computing the Vessiot space $\mathcal{V}_{\rho}[\mathcal{R}_{q}]$ at a
point $\rho\in\mathcal{R}_{q}$ is straightforward and requires only linear
algebra.  Any vector $X\in\mathcal{V}_{\rho}[\mathcal{R}_{q}]$ lies in the
contact distribution $\mathcal{C}^{(q)}|_{\rho}$ and thus is a linear
combination of the basic contact fields:
$X=aC^{(q)}|_{\rho}+bC_{q}|_{\rho}$.  On the other hand, $X$ must be
tangent to $\mathcal{R}_{q}$.  It is well-known that hence $X$ must satisfy
the equation $X(F)(\rho)=0$ where we again assume that $\mathcal{R}_{q}$ is
given as the zero set of the function $F:\mathcal{J}_{q}\rightarrow\RR$.
Entering our ansatz yields then the following linear equation for the two
coefficients $a$ and $b$:
\begin{equation}\label{eq:vessdist}
  C^{(q)}(F)(\rho)a+C_{q}(F)(\rho)b=0\,.
\end{equation}
Note that $X$ is vertical for $\pi^{q}$, if and only if the coefficient $a$
vanishes.  Obviously, at almost all points $\rho\in\mathcal{R}_{q}$ the
Vessiot space $\mathcal{V}_{\rho}[\mathcal{R}_{q}]$ is one-dimensional.

\section{Generalised and Geometric Solutions}
\label{sec:gensol}

From a geometric point of view, we identify any function
$\phi:\RR\rightarrow\RR$ with its graph or, more precisely, we prefer to
consider instead of the function $\phi$ the section
$\sigma_{\phi}:\RR\rightarrow\RR^{2},\ x\mapsto \bigl(x,\phi(x)\bigr)$
whose image is the graph of $\phi$.  It induces naturally a section of any
jet bundle $\mathcal{J}_{q}$ with $q\geq1$, namely the \emph{prolonged
  section}
\begin{displaymath}
  j_{q}\sigma_{\phi}\,:\,\RR\rightarrow\mathcal{J}_{q},\quad
  x\mapsto\bigl(x,\phi(x),\phi'(x),\dots,\phi^{(q)}(x)\bigr)
\end{displaymath}
Obviously, $j_{q}\sigma_{\phi}$ can be defined only at points $x$ where
$\phi$ is at least $q$ times differentiable.  A \emph{(strong) solution} of
a differential equation $\mathcal{R}_{q}\subseteq\mathcal{J}_{q}$ is a
function $\phi$ such that the image of $j_{q}\sigma_{\phi}$ lies completely
in the manifold $\mathcal{R}_{q}$.  This represents a natural geometric
formulation of the usual notion of a solution.  The Vessiot distribution
allows us to introduce a more general concept of solutions which helps in
the understanding of singularities.

\begin{definition}\label{def:gensol}
  A \emph{generalised solution} of the differential equation
  $\mathcal{R}_{q}$ is a one-dimensional integral manifold
  $\mathcal{N}\subseteq\mathcal{R}_{q}$ of the Vessiot distribution
  $\mathcal{V}[\mathcal{R}_{q}]$, i.\,e.\ at every point
  $\rho\in\mathcal{N}$ we have
  $T_{\rho}\mathcal{N}\subseteq\mathcal{V}_{\rho}[\mathcal{R}_{q}]$.  A
  generalised solution is \emph{proper}, if there does not exist a point
  $x\in\RR$ such that $\mathcal{N}\subseteq(\pi^{q})^{-1}(x)$.  The
  projection $\pi_{0}^{q}(\mathcal{N})\subset\mathcal{J}_{0}$ of a proper
  generalised solution is called a \emph{geometric solution}.
\end{definition}

If $\phi$ is a strong solution, then $\im{\sigma_{\phi}}$ is a geometric
solution coming from the generalised solution $\im{j_{q}\sigma_{\phi}}$.
However, not all geometric solutions are graphs of functions.  In fact,
they are not even necessarily smooth curves, as they arise via a
projection.  If a generalised solution is not proper, then it is of no
interest for an existence theory, as it lives completely over a single
point $x\in\RR$.  Sometimes such solutions can be useful as
separatrices.

In an \emph{initial value problem}, we prescribe a point\footnote{For
  implicit equations, it is generally necessary to prescribe also a value
  for the $q$th order derivative, as it is usually not uniquely determined
  by the differential equation.}
$\rho=(\bar x, \bar{\uv}_{(q)})\in\mathcal{R}_{q}$ and look for proper
generalised solutions containing $\rho$.  We distinguish between
\emph{one-sided} generalised solutions $\mathcal{N}$ where $\rho$ is a
boundary point of $\mathcal{N}$ and \emph{two-sided} solutions where $\rho$
is an interior point of $\mathcal{N}$.

In the case of an explicit equation, the classical existence and uniqueness
theorems for ordinary differential equations imply the existence of a
unique two-sided generalised solution through every point
$\rho_{q}\in\mathcal{R}_{q}$ and this generalised solution projects on a
strong solution (see Theorem~\ref{thm:regsing} below).  In the case of an
implicit equation, the situation becomes more involved at certain points,
namely the geometric singularities.  Following Arnold \cite{via:geoode}, we
will use the following taxonomy for smooth points on
$\mathcal{R}_{q}$.\footnote{Rabier \cite{pjr:singular} considers this
  terminology as ``inappropriate'', because the same terms appear in the
  Fuchs-Frobenius theory of linear ordinary differential equations with a
  different meaning.  However, from a geometric point of view, the
  terminology is very natural and as it has become standard in singularity
  theory, we will stick to it.}

\begin{definition}\label{def:sing}
  A smooth point $\rho\in\mathcal{R}_{q}$ is an \emph{irregular
    singularity} of the differential equation
  $\mathcal{R}_{q}\subset\mathcal{J}_{q}$, if
  $\dim{\mathcal{V}_{\rho}[\mathcal{R}_{q}]}>1$.  In the case of a
  one-dimensional Vessiot space, we further distinguish whether or not it
  lies transversal to the canonical fibration $\pi^{q}$.  If
  $\mathcal{V}_{\rho}[\mathcal{R}_{q}]$ is vertical (i.\,e.\ all solutions
  of \eqref{eq:vessdist} satisfy $a=0$), then the point $\rho$ is a
  \emph{regular singularity}.  Otherwise $\rho$ is a \emph{regular point}.
\end{definition}

It follows thus from \eqref{eq:vessdist} that $\rho\in\mathcal{R}_{q}$ is a
regular point, if and only if $C_{q}(F)(\rho)\neq0$.  A singularity is
irregular, if and only if not only $C_{q}(F)(\rho)$ vanishes but also
$C^{(q)}(F)(\rho)$.  Hence, we may conclude that generically all the
singularities form a submanifold of codimension $1$ and the irregular
singularities one of codimension $2$.

\begin{remark}\label{rem:irreg}
  It follows trivially from \eqref{eq:vessdist} that away from the
  irregular singularities the Vessiot distribution is smooth and regular.
  Thus in any simply connected domain $\Omega\subseteq\mathcal{R}_{q}$
  without irregular singularities $\mathcal{V}[\mathcal{R}_{q}]$ can be
  generated by a smooth vector field $X$.  One can show that such a field
  $X$ can be smoothly extended to any irregular singularity $\rho$ lying in
  the boundary of $\Omega$ and that generically it will vanish there
  \cite[Prop.~20]{ss:casc20}.\footnote{In an earlier version of this result
    \cite[Thm.~4.2]{wms:aims} some crucial conditions were omitted.  There
    are certain non-generic situations -- in particular when singular
    integrals exist -- where $X$ may not vanish at the irregular
    singularity.}  Now generalised solutions of $\mathcal{R}_{q}$ through
  $\rho$ are \emph{invariant} manifolds of the extended vector field $X$
  and thus we may study the local solution behaviour around $\rho$ with the
  help of dynamical systems theory.  In particular, it is now obvious that
  generally several generalised solutions will intersect at an irregular
  singularity.
\end{remark}

Away from irregular singularities, the existence and uniqueness theory of
differential equations satisfying our assumptions is rather simple.  We
recall the following result from \cite{wms:aims} which generalises the
classical existence and uniqueness theorem for explicit ordinary
differential equations.  We also include the short proof, as it makes the
underlying geometry more transparent.

\begin{theorem}\label{thm:regsing}
  Let $\mathcal{R}_{q}\subset\mathcal{J}_{q}$ be a scalar ordinary
  differential equation of order $q$ such that at every point
  $\rho\in\mathcal{R}_{q}$ the Vessiot space
  $\mathcal{V}_{\rho}[\mathcal{R}_{q}]$ is one-dimensional (i.\,e.\ there
  are no irregular singular points).  If $\rho$ is a regular point, then
  there exists a unique strong solution $\sigma$ with
  $\rho\in\im{j_q\sigma}$.  This solution is two-sided.  More precisely, it
  can be extended in both directions until $\im{j_q\sigma}$ reaches either
  the boundary of $\mathcal{R}_{q}$ or a regular singular point.  If $\rho$
  is a regular singular point, then either two strong one-sided solutions
  $\sigma_1,\sigma_2$ exist with $\rho\in\overline{\im{j_q\sigma_i}}$ which
  either both start or both end in $\rho$ or only one strong two-sided
  solution exists whose $(q+1)$th derivative blows up at $x=\pi^{q}(\rho)$.
\end{theorem}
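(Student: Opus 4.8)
The plan is to work directly with the vector field $X$ generating the Vessiot distribution, which exists and is smooth on all of $\mathcal{R}_{q}$ by Remark~\ref{rem:irreg} (since there are no irregular singularities by hypothesis, and hence no extension issues arise). The key observation is that a generalised solution through $\rho$ is precisely an integral curve of $X$, and a strong solution corresponds to such an integral curve that can be reparametrised so as to project diffeomorphically onto an interval of the $x$-axis. So the whole statement reduces to a careful reading of what the flow of $X$ does near $\rho$, distinguishing the two cases by whether $X|_{\rho}$ has a nonzero $C^{(q)}$-component.

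First I would treat the regular point case. Here $C_{q}(F)(\rho)\neq0$, so from \eqref{eq:vessdist} we may take the generator to be $X=C^{(q)}(F)\,C_{q}-C_{q}(F)\,C^{(q)}$ (up to sign/normalisation), whose $C^{(q)}$-coefficient $-C_{q}(F)$ is nonvanishing in a neighbourhood of $\rho$. Thus $\pi^{q}_{*}X=-C_{q}(F)\,\partial_{x}\neq0$, i.e. $X$ is transversal to the fibres of $\pi^{q}$ near $\rho$; the integral curve of $X$ through $\rho$ therefore projects locally diffeomorphically to the $x$-axis and is the prolongation $j_{q}\sigma$ of an honest function $\sigma$ (here one invokes that an integral manifold of the contact distribution which is transversal to $\pi^{q}$ is automatically a prolonged section — a standard fact about the contact structure). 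Uniqueness is the uniqueness of integral curves of a smooth vector field. For the extension statement, I would let the integral curve run in forward and backward "contact time" as far as it goes: it can only fail to continue by leaving $\mathcal{R}_{q}$ (reaching the boundary) or by the transversality to $\pi^{q}$ breaking down, i.e. by reaching a point where $-C_{q}(F)=0$ while $X\neq0$ — which, since there are no irregular singularities, is exactly a regular singular point.

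Next the regular singular point case. Now $C_{q}(F)(\rho)=0$ but $C^{(q)}(F)(\rho)\neq0$ (otherwise $\rho$ would be irregular), so the Vessiot space at $\rho$ is the vertical line spanned by $C_{q}$, and the generator can be normalised near $\rho$ as $X=C^{(q)}+h\,C_{q}$ with $h=-C^{(q)}(F)/C_{q}(F)$ smooth away from the singular set and blowing up at $\rho$; alternatively, and more cleanly, use the unnormalised field $\tilde X=C_{q}(F)\,C^{(q)}-C^{(q)}(F)\,C_{q}$, which is smooth everywhere, nonzero at $\rho$ (its $C_{q}$-component is $-C^{(q)}(F)(\rho)\neq0$), and whose integral curve through $\rho$ is the generalised solution. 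The point is that the $C^{(q)}$-coefficient of $\tilde X$, namely $C_{q}(F)$, vanishes at $\rho$; so along the integral curve, parametrised by contact-time $t$, we have $\dot x = C_{q}(F)$, which changes sign generically as $t$ passes through the value at $\rho$ (it has a simple zero there under the generic condition that $C^{(q)}$-component of the derivative is nonzero) — giving a fold of the curve over the $x$-axis and hence two one-sided solutions meeting at $\rho$, both starting or both ending there according to the sign — or, in the degenerate subcase, $\dot x$ has a zero of even order (does not change sign), the curve stays on one side and projects to a two-sided solution, but its reparametrisation by $x$ forces $u^{(q)}$ to vary infinitely fast, i.e. $u^{(q+1)}=du^{(q)}/dx$ blows up at $x=\pi^{q}(\rho)$. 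I would make this precise by computing $\frac{d}{dt}$ of the $u^{(q)}$-coordinate along $\tilde X$, which equals $-C^{(q)}(F)(\rho)\neq 0$ at $\rho$, against $\dot x \to 0$, so $du^{(q)}/dx = (-C^{(q)}(F))/C_{q}(F)\to\infty$.

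The main obstacle is the case analysis at the regular singular point: one must argue cleanly why precisely the two alternatives occur and no others. This amounts to understanding the local behaviour of the function $t\mapsto x(t)$ near a zero — a simple zero gives the fold (two one-sided solutions), a higher-order zero the cusp-like picture (one two-sided solution with blow-up) — and one should check that, under the standing assumptions (no irregular singularities, no singular integrals, reduced form), no worse degeneracy (e.g. $x(t)$ constant on an interval) can occur, which would correspond exactly to an improper generalised solution living in a single fibre and is excluded. I would also need to be slightly careful that "two-sided" in the last alternative really means $\bar x$ is interior to the $x$-existence interval even though the curve is tangent to the fibre at $\rho$; this follows because the even-order zero of $\dot x$ means $x(t)$ is monotone on both sides of the critical $t$, so its image is a genuine two-sided neighbourhood of $\bar x$, with the loss of regularity confined to the single point $x=\bar x$.
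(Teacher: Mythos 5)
Your proposal is correct and follows essentially the same route as the paper: the paper invokes the Frobenius theorem for the one-dimensional Vessiot distribution (equivalently, integral curves of a generating vector field $X$, as you do), obtains the strong solution at regular points from transversality of $X$ to $\pi^{q}$, and at a regular singular point distinguishes exactly the same two alternatives by whether the $\partial_{x}$-component of $X$ changes sign, with the blow-up of $u^{(q+1)}$ coming from Remark~\ref{rem:vdprol} — which your explicit computation of $du^{(q)}/dx=-C^{(q)}(F)/C_{q}(F)$ merely unpacks.
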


\begin{proof}
  By the made assumptions, $\mathcal{V}[\mathcal{R}_{q}]$ is a smooth
  regular one-dimensional distribution and hence trivially involutive.  The
  Frobenius theorem guarantees for each point $\rho\in\mathcal{R}_{q}$ the
  existence of a unique generalised solution $\mathcal{N}_{\rho}$ with
  $\rho\in\mathcal{N}_{\rho}$.  This generalised solution is a smooth curve
  which can be extended until it reaches the boundary of $\mathcal{R}_{q}$
  and around each regular point $\bar{\rho}\in\mathcal{N}_{\rho}$ it
  projects onto the graph of a strong solution $\sigma$, since
  $\mathcal{V}_{\bar{\rho}}[\mathcal{R}_{q}]$ is transversal.

  Assume that in an open, simply connected neighbourhood of $\rho$ the
  Vessiot distribution $\mathcal{V}[\mathcal{R}_{q}]$ is generated by the
  vector field $X$.  If $\rho$ is a regular singular point, then $X_{\rho}$
  is vertical for $\pi^q$, i.\,e.\ its $\partial_{x}$-component vanishes.
  The behaviour of the projection
  $\widetilde{\mathcal{N}}_{\rho}=\pi^{q}_{0}(\mathcal{N}_{\rho})$ depends
  on whether or not the $\partial_{x}$-component changes its sign
  at~$\rho$.  If the sign changes, then $\widetilde{\mathcal{N}}_{\rho}$
  has two branches corresponding to two strong solutions which either both
  end or both begin at $\hat{\rho}=\pi^{q}_{0}(\rho)$.  Otherwise
  $\widetilde{\mathcal{N}}_{\rho}$ is around $\hat{\rho}$ the graph of a
  strong solution, but Remark \ref{rem:vdprol} below implies that the
  $(q+1)$th derivative of this solution at $x=\pi^{q}(\rho)$ is infinite.
\end{proof}

As already mentioned in Remark \ref{rem:irreg}, we expect that at an
irregular singularity several (possibly infinitely many) generalised
solutions meet and thus the classical uniqueness statements fail.  However,
there are situations when only a unique \emph{proper} generalised solution
goes through an irregular singularity.  In such a case, this solution is
completely regular and the singular character of the singular point lies
solely in the behaviour of the nearby generalised solutions.  Around a
regular point, the generalised solutions define a regular foliation.

\begin{example}\label{ex:appsing}
  We consider the scalar first-order equation
  $\mathcal{R}_{1}\subset\mathcal{J}_{1}$ described by $u(u')^{2}+x=0$.
  The linear equation defining the Vessiot spaces takes here the form
  $\bigl(1+(u')^{3}\bigr)a+2uu'b=0$.  Hence singularities are all those
  points on $\mathcal{R}_{1}$ where either $u=0$ or $u'=0$.  Irregular
  singularities must satisfy in addition $(u')^{3}=-1$.  Hence we have
  exactly one irregular singularity, namely the point $\rho=(0,0,-1)$.
  Outside of this point $\rho$, the Vessiot distribution
  $\mathcal{V}[\mathcal{R}_{1}]$ is one-dimensional and spanned by the
  vector field
  $X=2uu'(\partial_{x}+u'\partial_{u})-\bigl(1+(u')^{3}\bigr)\partial_{u'}$.
  
  It should be noted that -- although the explicit coordinate expression
  seems to indicate otherwise -- the vector field $X$ is defined only on
  the two-dimensional manifold $\mathcal{R}_{1}$ and not on the whole jet
  bundle $\mathcal{J}_{1}$.  In this particular case, it is obvious that
  $u$ and $u'$ could be used as parameters, as $\mathcal{R}_{1}$ is the
  graph of a function $x=h(u,u')$.  In general, it is hard (if not
  impossible) to find a global parametrisation.  Therefore, we will work
  throughout this article with the redundant coordinates of the ambient jet
  bundle.
  
  Obviously, at the point $\rho$ the vector field $X$ vanishes.  The
  Jacobian of $X$ evaluated at the singularity $\rho$ is given by the
  matrix
  \begin{displaymath}
    \begin{pmatrix}
      0 & -2 & 0\\
      0 & 2 & 0\\
      0 & 0 & -3
    \end{pmatrix}\,.
  \end{displaymath}
  It has the eigenvalues $0$, $2$ and $-3$.  The eigenvector to the first
  eigenvalue, $(1,0,0)^{T}$, is not tangential to $\mathcal{R}_{1}$ and
  hence irrelevant.\footnote{The appearance of this spurious
    eigenvalue/vector is a consequence of our use of redundant coordinates.
    If we had worked with a proper parametrisation of $\mathcal{R}_{1}$, we
    would have obtained a $2\times2$ Jacobian and no spurious eigenvalue
    could have arisen.  It is, however, generally much easier to check an
    eigenvector for tangency than to find good parametrisations.}  The
  eigenvector to the third eigenvalue, $(0,0,1)^{T}$, is tangent to the
  fibre $(\pi^{1}_{0})^{-1}(0,0)$ and it is easy to see that this fibre
  actually represents the corresponding invariant manifold: it is
  completely contained in $\mathcal{R}_{1}$ and the vector field $X$ is
  vertical at every point on it.  Moreover, the fibre consists entirely of
  regular singularities.  Thus we do not get a proper generalised solution
  out of this eigenvalue.  The invariant manifold corresponding to the
  second eigenvalue (which is tangent to the eigenvector $(1,-1,0)^{T}$) is
  the unique proper generalised solution through $\rho$.  Since the tangent
  vector is transversal with respect to the fibration $\pi^{1}$ (its first
  component does not vanish), the corresponding geometric solution is a
  strong solution, namely $u(x)=-x$.

  It is not difficult to compute the general solution of this implicit
  equation via separation of variables: it is given by
  \begin{equation}\label{eq:sol1}
    u(x)=
    \begin{cases}
      \sqrt[3]{(C\pm\sqrt{-x^{3}})^{2}} & x\leq0\\
      -\sqrt[3]{(C\pm\sqrt{x^{3}})^{2}} & x\geq0
    \end{cases}
  \end{equation}
  which yields for $C=0$ the above mentioned strong solution whose
  prolongation passes through the irregular singularity.  For all other
  values of the parameter $C$, the corresponding generalised solutions hit
  the regular singularities $(0,\pm\sqrt[3]{C^{2}},0)$ depending on whether
  we approach $x=0$ from the left or from the right.  In the first case we
  always find two solutions ending in this point, in the second case two
  solutions start in this point.  The two solutions are always obtained by
  the two different signs in the corresponding branch of \eqref{eq:sol1}.
  Thus we are here in the generic case of Theorem \ref{thm:regsing}.  Note
  that the unique generalised solution through the irregular singularity is
  the only generalised solution existing for positive and for negative
  values of $x$.
\end{example}

\section{Prolongations}\label{sec:prol}

By the definition of a jet bundle, the manifold
$\mathcal{R}_{q}\subseteq\mathcal{J}_{q}$ contains only information about
derivatives up to order $q$.  For a regularity theory, we must also be able
to speak about higher-order derivatives.  This means that we must also look
at the prolongations of $\mathcal{R}_{q}$.  An intrinsic geometric
description of the prolongation process is somewhat cumbersome
\cite{wms:invol}, but in local coordinates it becomes straightforward.
Assume that the differential equation
$\mathcal{R}_{q}\subseteq\mathcal{J}_{q}$ is given as the zero set of a
function $F:\mathcal{J}_{q}\rightarrow\RR$. Then the \emph{first
  prolongation} $\mathcal{R}_{q+1}\subseteq\mathcal{J}_{q+1}$ is the zero
set of both the function $F$ and its formal derivative
\begin{equation}\label{eq:fderiv}
  D_{x}F=C^{(q)}(F)+C_{q}(F)u^{(q+1)}:
  \mathcal{J}_{q+1}\rightarrow\RR\,.
\end{equation}
$\mathcal{R}_{q+1}$ is not necessarily a manifold anymore, but for
simplicity we will assume in the sequel that it is.  Iteration of this
process yields the \emph{higher prolongations}
$\mathcal{R}_{q+r}\subseteq\mathcal{J}_{q+r}$ for any $r\in\NN$: at each
prolongation order $r$ we have to add one further equation
$D_{x}^{r}F(x,\uv_{(q+r)})=0$.  Note that the formal derivative always
yields a quasi-linear function, since we have
$\partial(D_{x}F)/\partial u^{(q+1)}= C_{q}(F)=\partial F/\partial
u^{(q)}$.

\begin{remark}\label{rem:vdprol}
  Prolonging the differential equation $\mathcal{R}_{q}$ requires
  essentially the same computations as determining its Vessiot spaces
  $\mathcal{V}_{\rho}[\mathcal{R}_{q}]$.  Indeed, we may consider
  (\ref{eq:vessdist}) as a homogenised (or ``projective'') form of
  (\ref{eq:fderiv}) considered as a linear equation for $u^{(q+1)}$.  For
  any solution $(a,b)$ of (\ref{eq:vessdist}) with $a\neq0$, we may
  identify the quotient $b/a$ with the coordinate $u^{(q+1)}$ of a point
  $\hat{\rho}\in\mathcal{R}_{q+1}\cap(\pi^{q+1}_{q})^{-1}(\rho)$ and
  conversely any such point defines a one-parameter family of solutions
  $(a,b)$ of (\ref{eq:vessdist}).  If we assume that (\ref{eq:vessdist})
  has no solution $(a,b)$ with $a\neq0$, i.\,e.\ the Vessiot space
  $\mathcal{V}_{\rho}[\mathcal{R}_{q}]$ is vertical, then by the same
  reasoning there cannot exist a point
  $\hat{\rho}\in\mathcal{R}_{q+1}\cap(\pi^{q+1}_{q})^{-1}(\rho)$ which
  implies that any strong solution $\phi$ with
  $\rho\in\im{j_{q}\sigma_{\phi}}$ lies in
  $\mathcal{C}^{q}\setminus\mathcal{C}^{q+1}$, i.\,e.\ is of finite
  regularity.
\end{remark}

This observation has the following implications for solutions of a
differential equation $\mathcal{R}_{q}\subset\mathcal{J}_{q}$.  Assume that
we have a proper generalised solution
$\mathcal{N}_{r}\subseteq\mathcal{R}_{r}$ living on some prolongation of
order $r\geq q$ which projects on a strong solution, i.\,e.\ the
corresponding geometric solution is the graph of a function.  Then this
function is -- by definition of the jet bundle -- at least of class
$\mathcal{C}^{r}$ and all projections $\pi^{r}_{r'}(\mathcal{N}_{r})$ to an
order $q\leq r'\leq r$ define generalised solutions of the corresponding
prolongations $\mathcal{R}_{r'}$.  However, a generalised solution
$\mathcal{N}_{r+1}$ of the next prolongation $\mathcal{R}_{r+1}$ projecting
onto $\mathcal{N}_{r}$ will only exist, if this function is at least of
class $\mathcal{C}^{r+1}$.

Hence we can make statements about the regularity of solutions by studying
the behaviour of the prolongations.  As a first step, we translate the
observation made in Remark~\ref{rem:vdprol} into a statement about the
fibres above points on $\mathcal{R}_{q}$ by combining it with
Definition~\ref{def:sing}.

\begin{proposition}\label{prop:fibre}
  Let $\rho_{q}\in\mathcal{R}_{q}$ be an arbitrary point on the $q$th order
  differential equation $\mathcal{R}_{q}$ and consider the fibre
  $\mathcal{F}_{q+1}=(\pi^{q+1}_{q})^{-1}(\rho_{q})\cap\mathcal{R}_{q+1}$
  above it in the first prolongation $\mathcal{R}_{q+1}$.  If $\rho_{q}$ is
  a regular point, then $\mathcal{F}_{q+1}$ is non-empty and consists
  entirely of regular points of $\mathcal{R}_{q+1}$.  If $\rho_{q}$ is a
  regular singularity, then $\mathcal{F}_{q+1}$ is empty.  In the case of
  an irregular singularity, the fibre $\mathcal{F}_{q+1}$ is non-empty and
  consists entirely of singular points of $\mathcal{R}_{q+1}$.
\end{proposition}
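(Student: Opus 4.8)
The plan is to reduce both assertions — emptiness/non-emptiness of $\mathcal{F}_{q+1}$ and the type of its points — to fibrewise linear algebra built from \eqref{eq:fderiv} and \eqref{eq:vessdist}. First I would recall that $\mathcal{R}_{q+1}\subseteq\mathcal{J}_{q+1}$ is cut out by $F$ together with $D_xF = C^{(q)}(F)+C_q(F)\,u^{(q+1)}$, and that the coefficient functions $C^{(q)}(F)$ and $C_q(F)$ depend only on the jet coordinates of order $\le q$. Hence on the fibre $(\pi^{q+1}_q)^{-1}(\rho_q)$, where every coordinate except $u^{(q+1)}$ is frozen to its value at $\rho_q$, the equation $D_xF=0$ becomes the genuinely affine-linear scalar equation
\[
  C^{(q)}(F)(\rho_q)+C_q(F)(\rho_q)\,u^{(q+1)}=0
\]
in the single unknown $u^{(q+1)}$. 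Its solvability is read off directly, using the characterisations following Definition~\ref{def:sing}: at a regular point $C_q(F)(\rho_q)\neq0$ and there is exactly one solution, so $\mathcal{F}_{q+1}$ is a single point; at a regular singularity $C_q(F)(\rho_q)=0$ but $C^{(q)}(F)(\rho_q)\neq0$ and there is no solution, so $\mathcal{F}_{q+1}=\emptyset$; at an irregular singularity both coefficients vanish, the equation is $0=0$, and $\mathcal{F}_{q+1}$ is the entire fibre. This settles the (non-)emptiness part.

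For the type of a point $\rho_{q+1}\in\mathcal{F}_{q+1}$ I would write down \eqref{eq:vessdist} for $\mathcal{R}_{q+1}$. A contact vector on $\mathcal{J}_{q+1}$ has the form $X=aC^{(q+1)}+bC_{q+1}$, and tangency to $\mathcal{R}_{q+1}$ means $X(F)=0$ and $X(D_xF)=0$. The key bookkeeping identity is that, since $C^{(q+1)}=C^{(q)}+u^{(q+1)}C_q$, we have $C^{(q+1)}(F)=D_xF$ by \eqref{eq:fderiv}, while $C_{q+1}(F)=0$; hence $X(F)=a\,D_xF$ vanishes identically on $\mathcal{R}_{q+1}$ and the only effective constraint is $X(D_xF)=0$. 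Using $C_{q+1}(D_xF)=C_q(F)$ (already noted after \eqref{eq:fderiv}), this reads
\[
  C^{(q+1)}(D_xF)(\rho_{q+1})\,a + C_q(F)(\rho_q)\,b = 0 ,
\]
which is precisely the Vessiot equation \eqref{eq:vessdist} for $\mathcal{R}_{q+1}$ — so, although $\mathcal{R}_{q+1}$ has codimension two, its Vessiot space is still governed by a single linear equation. Since the coefficient of $b$ is again $C_q(F)(\rho_q)$, Definition~\ref{def:sing} now gives: when $\rho_q$ is regular this coefficient is nonzero, so the unique point of $\mathcal{F}_{q+1}$ is a regular point of $\mathcal{R}_{q+1}$; when $\rho_q$ is an irregular singularity the coefficient vanishes, so the Vessiot equation has no solution with $a\neq0$ and therefore no point of $\mathcal{F}_{q+1}$ can be a regular point, i.e.\ each of them is a (regular or irregular) singularity of $\mathcal{R}_{q+1}$. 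Combined with the first paragraph, this is the full statement.

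I do not expect a genuine obstacle: once the fibrewise picture is set up everything is linear algebra, and the one identity to get right is $C^{(q+1)}(F)=D_xF$, which is exactly what lets the codimension-two equation $\mathcal{R}_{q+1}$ behave like a hypersurface for the purposes of its Vessiot distribution. The only point needing a word of care is that, at an irregular singularity, the differentials of $F$ and $D_xF$ may become dependent, so $\mathcal{R}_{q+1}$ can acquire an \emph{algebraic} singularity somewhere along the fibre; in accordance with Remark~\ref{rem:algsing} we restrict throughout to the smooth locus, on which the computation above is valid verbatim.
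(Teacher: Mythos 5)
Your proof is correct and follows essentially the same route as the paper's: the (non-)emptiness statements come from the fibrewise affine-linear equation $C^{(q)}(F)(\rho_q)+C_q(F)(\rho_q)\,u^{(q+1)}=0$ (i.e.\ Remark~\ref{rem:vdprol}), and the classification of the points in the fibre comes from the quasi-linearity identity $C_{q+1}(D_xF)=C_q(F)$, with your observation that $X(F)=a\,D_xF$ vanishes on $\mathcal{R}_{q+1}$ being exactly the reason the paper can treat the codimension-two prolongation as if it were the hypersurface $D_xF=0$. One clause is slightly imprecise: at an irregular singularity the reduced Vessiot equation $C^{(q+1)}(D_xF)(\rho_{q+1})\,a=0$ \emph{may} admit solutions with $a\neq0$ (namely when its remaining coefficient also vanishes, giving an irregular singularity of $\mathcal{R}_{q+1}$); the correct argument is that the vertical vector $(a,b)=(0,1)$ always solves it, so the point can never be regular — which is the conclusion you in fact draw.
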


\begin{proof}
  For regular singularities, the assertion follows immediately from
  Remark~\ref{rem:vdprol}, as for them the Vessiot space is vertical by
  definition.  The remark also implies that in the other two cases, the
  fibre is non-empty.  The respective statement about the nature of the
  points in the fibre follows from the quasi-linearity of the formal
  derivative.  Singular points are characterised by the vanishing of the
  Jacobian with respect to the highest order derivative and hence the above
  mentioned equality
  $\partial F/\partial u^{(q)}=\partial(D_{x}F)/\partial u^{(q+1)}$ entails
  the claim.
\end{proof}

Note that in the case of an irregular singularity, we only assert that the
fibre consists of singular points -- nothing is said about whether these
are regular or irregular.  As we will see later, essentially everything is
possible.  One consequence of Proposition \ref{prop:fibre} is that there
can never exist a smooth (generalised) solution through a regular singular
point (we saw this already in Theorem \ref{thm:regsing}).  The regularity
of any geometric solution reaching a regular singularity
$\rho\in\mathcal{R}_{q}$ is there always $q$.

A necessary condition for the existence of a generalised solution through
an irregular singular point $\rho_{q}\in\mathcal{R}_{q}$ projecting on a
geometric solution which is the graph of a function of regularity
$\mathcal{C}^{r}$ for some $r\geq q$ is that the fibre above it contains at
least one irregular singularity $\rho_{q+1}\in\mathcal{R}_{q+1}$ and that
the same holds for the fibre $\mathcal{F}_{q+2}$ above $\rho_{q+1}$ and so
on until prolongation order $r$.  If some fibre contains more than one
irregular singularity, then it is possible that several such solutions go
through $\rho_{q}$.  In the case of a fibre consisting entirely of
irregular singularities, this may even be infinitely many.  In Remark
\ref{rem:irreg}, we mentioned that we can study the generalised solutions
around the irregular singularity $\rho_{q}\in\mathcal{R}_{q}$ by analysing
the local phase portrait of a vector field $X$.  If there are local
solutions of different regularity, then the local phase portraits around
the sequence of irregular singularities $\rho_{q}$, $\rho_{q+1}$, \dots\
may qualitatively change at some order.  This observation will allow us
statements about the regularity of solutions.

\section{Impasse Points of Quasi-Linear Equations}
\label{sec:impasse}

In the previous sections, we considered arbitrary implicit ordinary
differential equations.  From now on, we specialise to quasi-linear
equations of the form \eqref{eq:qlode}, i.\,e.\
$g(x,\uv_{(q-1)})u^{(q)}=f(x,\uv_{(q-1)})$.  From a geometric point of
view, quasi-linearity means that $\mathcal{R}_{q}$ is an affine subbundle
of $\mathcal{J}_{q}$ (see the discussion in \cite[Rem.~10.1.4]{wms:invol}).
The linear equation \eqref{eq:vessdist} determining the Vessiot space at a
point $\rho=(\bar x,\bar{\uv}_{(q)})\in\mathcal{R}_{q}$ takes then the form
\begin{equation}\label{eq:vdql}
  \Bigl[C^{(q)}(g)(\rho)\bar{u}^{(q)}-C^{(q)}(f)(\rho)\Bigr]a+g(\rho)b=0\,.
\end{equation}
Whether or not the point $\rho$ is a singularity is independent of the
value of $\bar{u}^{(q)}$, as all singularities are obviously characterised
by the condition $g(\rho)=0$ and by assumption the function $g$ does not
depend on $u^{(q)}$.  A singularity is irregular, if and only if in
addition the equation
\begin{equation}\label{eq:irreg}
  C^{(q)}(g)(\rho)\bar{u}^{(q)}-C^{(q)}(f)(\rho)=0
\end{equation}
holds and this condition generally depends on the value of $\bar{u}^{(q)}$.

The key property of a quasi-linear equation
$\mathcal{R}_{q}\subset\mathcal{J}_{q}$ is that its analysis can be
performed already in the jet bundle $\mathcal{J}_{q-1}$ of one order less
(see also the discussion in \cite{wms:singbif}).  Consider the subset
$\widetilde{\mathcal{R}}_{q-1}=
\pi^{q}_{q-1}(\mathcal{R}_{q})\subseteq\mathcal{J}_{q-1}$ obtained by
projecting the $q$the order differential equation $\mathcal{R}_{q}$ into
the jet bundle $\mathcal{J}_{q-1}$ of one order less.  A point
$\tilde{\rho}\in\mathcal{J}_{q-1}$ lies in $\widetilde{\mathcal{R}}_{q-1}$,
if and only if either $g(\tilde{\rho})\neq0$ (in this case the fibre
$\mathcal{F}_{q}=(\pi^{q}_{q-1})^{-1}(\tilde{\rho})\cap\mathcal{R}_{q}$
consists of exactly one point $\rho$ which is regular for
$\mathcal{R}_{q}$) or $g(\tilde{\rho})=f(\tilde{\rho})=0$ (now the fibre
$\mathcal{F}_{q}$ is one-dimensional, i.\,e.\ it contains infinitely many
points which are all singularities).

On the open subset $\mathcal{S}_{q}\subseteq\mathcal{R}_{q}$ obtained by
removing all irregular singularities of the differential equation, the
Vessiot spaces form a one-dimensional smooth distribution
$\mathcal{V}[\mathcal{R}_{q}]$ generated by the vector field
\begin{equation}\label{eq:X}
  X=gC^{(q)}-\Bigl[C^{(q)}(g)u^{(q)}-C^{(q)}(f)\Bigr]C_{q}\,.
\end{equation}
This follows immediately from solving \eqref{eq:vdql}.  Writing out
$C^{(q)}$ and noting that everywhere on $\mathcal{R}_{q}$ we have
$gu^{(q)}=f$, we find that the vector field $X$ is projectable along
$\pi^{q}_{q-1}$ to the subset
$\widetilde{\mathcal{S}}_{q-1}= \pi^{q}_{q-1}(\mathcal{S}_{q}) \subseteq
\widetilde{\mathcal{R}}_{q-1}\subseteq\mathcal{J}_{q-1}$.  In other words,
if we take for each point $\rho\in\mathcal{S}_{q}$ the vector
$X_{\rho}\in T_{\rho}\mathcal{R}_{q}$ obtained by evaluating the vector
field \eqref{eq:X} at $\rho$ and then project it with the tangent map
$T_{\rho}\pi^{q}_{q-1}$ into the tangent space
$T_{\tilde{\rho}}\widetilde{\mathcal{R}}_{q-1}$ at the point
$\tilde{\rho}=\pi^{q}_{q-1}$, then we obtain on
$\widetilde{\mathcal{S}}_{q-1}$ a well-defined vector field\footnote{In an
  arbitrary implicit equation typically several points
  $\rho\in\mathcal{R}_{q}$ project on the same point
  $\tilde{\rho}\in\widetilde{\mathcal{R}}_{q-1}$, but there is no reason
  why the corresponding vectors $X_{\rho}$ should be mapped on the same
  vector.  However, in the case of a quasi-linear equation, there exists
  only one point $\rho\in\mathcal{R}_{q}$ over every point
  $\tilde{\rho}\in\widetilde{\mathcal{S}}_{q-1}$ and hence we obtain a
  unique vector.} given by
\begin{equation}\label{eq:projvd}
  Y=g\Bigl(\partial_{x}+u^{(1)}\partial_{u}+\cdots+
           u^{(q-1)}\partial_{u^{(q-2)}}\Bigr)
   +f\partial_{u^{(q-1)}}=gC^{(q-1)}+fC_{q-1}\,.
\end{equation}

The thus constructed vector field $Y$ can trivially be analytically
continued to any point $\tilde{\rho}\in\mathcal{J}_{q-1}$ where both
functions $f$ and $g$ are defined.  We will assume in the sequel for
simplicity that this is the case on the whole jet bundle
$\mathcal{J}_{q-1}$, as in many applications $f$ and $g$ are polynomials
and thus indeed everywhere defined.  We continue to call this field $Y$ and
note for later use that by construction it is a contact vector field,
i.\,e.\ it lies in the contact distribution $\mathcal{C}^{(q-1)}$ on
$\mathcal{J}_{q-1}$.

\begin{remark}
  Any other vector field $\tilde{X}$ that also generates the distribution
  $\mathcal{V}[\mathcal{R}_{q}]$ is of the form $\tilde{X}=hX$ with a
  nowhere vanishing function $h$.  It will also be projectable provided $h$
  does not depend on $u^{(q)}$ and in this case its projection is simply
  $hY$.  Thus we actually obtain a whole projected distribution. Because of
  our assumption that $f$ and $g$ have no non-trivial common factor, $Y$
  may be considered as a ``minimal'' generator of this distribution without
  spurious zeros.  Therefore we will work in the sequel exclusively with
  $Y$.
\end{remark} 

\begin{definition}\label{def:impasse}
  A point $\tilde{\rho}\in\mathcal{J}_{q-1}$ is an \emph{impasse
    point}\footnote{We use the word ``impasse point'' to clearly
    distinguish from ``singular points'' which for us always live in
    $\mathcal{R}_{q}$, i.\,e.\ one order higher.  In the literature, the
    name ``impasse point'' appears mainly in the context of differential
    algebraic equations where almost exclusively quasi-linear systems are
    studied.  But it seems that every author has here his/her own
    terminology\dots} for the quasi-linear differential equation
  $\mathcal{R}_{q}\subset\mathcal{J}_{q}$, if the vector field $Y$ given by
  \eqref{eq:projvd} is not transversal to the fibration $\pi^{q-1}$ at
  $\tilde{\rho}$ (i.\,e.\ if its $\partial_{x}$-component vanishes at
  $\tilde{\rho}$).  Otherwise it is a \emph{regular point}.  An impasse
  point is \emph{proper}, if the field $Y$ vanishes at $\tilde{\rho}$.
  Otherwise it is \emph{improper}.
\end{definition}

\begin{remark}
  This possibility to first project the vector field $X$ defined on an open
  subset of the original differential equation $\mathcal{R}_{q}$ obtaining
  the vector field $Y$ defined on some subset of
  $\widetilde{\mathcal{R}}_{q-1}$ and then to continue analytically $Y$ to
  all of $\mathcal{J}_{q-1}$ is specific to quasi-linear equations and has
  no analogon for fully non-linear equations.  In particular, for fully
  non-linear equations it is not possible to study solutions outside of
  projections of $\mathcal{R}_{q}$, i.\,e.\ a notion like an improper
  impasse point cannot be introduced in classical singularity theory.
  Indeed, there cannot be a (prolonged) strong solution through an improper
  impasse point, but we will see below that it makes sense to study initial
  value problems at such points.
\end{remark}

Obviously, impasse points are characterised by the condition
$g(\tilde{\rho})=0$ and at proper impasse points we find in addition that
also $f(\tilde{\rho})=0$ which is equivalent to
$\tilde{\rho}\in\widetilde{\mathcal{R}}_{q-1}$.  We now study the
relationship of impasse points and singularities.  Using the splitting
$C^{(q)}=C^{(q-1)}+u^{(q)}C_{q-1}$, the irregularity condition
\eqref{eq:irreg} can be written as a quadratic equation for the coordinate
$\bar{u}^{(q)}$:
\begin{equation}\label{eq:irreg2}
  C^{(q-1)}(f)(\tilde{\rho}) +
  \Bigl[C_{q-1}(f)(\tilde{\rho}) -
        C^{(q-1)}(g)(\tilde{\rho})\Bigr]\bar{u}^{(q)} -  
  C_{q-1}(g)(\tilde{\rho})\bigl(\bar{u}^{(q)}\bigr)^{2}=0\,.
\end{equation}
Together with the above made observations on
$\widetilde{\mathcal{R}}_{q-1}$, the various cases for this equation lead
immediately to the following result.

\begin{proposition}\label{prop:impasse}
  Let $\tilde{\rho}\in\mathcal{J}_{q-1}$ be an arbitrary point.  If
  $\tilde{\rho}$ is regular, then there exists a unique point
  $\rho\in\mathcal{R}_{q}$ with $\pi^{q}_{q-1}(\rho)=\tilde{\rho}$ and this
  point is regular, too.  If $\tilde{\rho}$ is an improper impasse point,
  then there exists no point $\rho\in\mathcal{R}_{q}$ with
  $\pi^{q}_{q-1}(\rho)=\tilde{\rho}$.  If $\tilde{\rho}$ is a proper
  impasse point, then every point $\rho\in\mathcal{J}_{q}$ with
  $\pi^{q}_{q-1}(\rho)=\tilde{\rho}$ lies in $\mathcal{R}_{q}$ and is a
  singularity.  Four different cases arise:
  \begin{enumerate}[label=(\roman*)]
  \item If $C_{q-1}(g)(\tilde{\rho})\neq0$, then the fibre above
    $\tilde{\rho}$ contains either two or no irregular singularities
    (counted with multiplicity).  We find two irregular singularities, if
    and only if in addition
    \begin{equation}\label{eq:disc}
      \Bigl[C_{q-1}(f)(\tilde{\rho})-C^{(q-1)}(g)(\tilde{\rho})\Bigr]^{2}
      +4C^{(q-1)}(f)(\tilde{\rho})C_{q-1}(g)(\tilde{\rho})\geq 0\,.
    \end{equation}
  \item If $C_{q-1}(g)(\tilde{\rho})=0$ and $C_{q-1}(f)(\tilde{\rho})\neq
    C^{(q-1)}(g)(\tilde{\rho})$, the fibre contains a unique irregular
    singularity.
  \item If $C_{q-1}(g)(\tilde{\rho})=0$ and
    $C_{q-1}(f)(\tilde{\rho})= C^{(q-1)}(g)(\tilde{\rho})$ and
    $C^{(q-1)}(f)(\tilde{\rho})\neq0$, then there are no irregular
    singularities in the fibre.
  \item If $C_{q-1}(g)(\tilde{\rho})=0$ and
    $C_{q-1}(f)(\tilde{\rho})= C^{(q-1)}(g)(\tilde{\rho})$ and
    $C^{(q-1)}(f)(\tilde{\rho})=0$, then the entire fibre consists only of
    irregular singularities.
  \end{enumerate}
\end{proposition}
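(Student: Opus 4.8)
The plan is to exploit the observation, already established before the statement, that impasse points are exactly the points of $\mathcal{J}_{q-1}$ where $g$ vanishes, that proper impasse points additionally satisfy $f(\tilde\rho)=0$, and that over such a proper impasse point the fibre $(\pi^q_{q-1})^{-1}(\tilde\rho)$ lies entirely in $\mathcal{R}_q$ and consists only of singularities (since $g(\rho)=g(\tilde\rho)=0$ for every point $\rho$ in this fibre, and $g$ does not depend on $u^{(q)}$). The only genuinely new content to prove is the trichotomy between regular points and improper/proper impasse points in the first three sentences, and then the count of irregular singularities in the four subcases (i)--(iv). The trichotomy is immediate: if $g(\tilde\rho)\neq0$ then $\tilde\rho$ is regular and the defining equation $g(\rho)u^{(q)}=f(\rho)$ determines $u^{(q)}$ uniquely, giving a single point $\rho$ which is regular because $C_q(F)(\rho)=g(\rho)\neq0$ in the sense of Definition~\ref{def:sing}; if $g(\tilde\rho)=0$ but $f(\tilde\rho)\neq0$ then the equation $0\cdot u^{(q)}=f(\tilde\rho)\neq0$ has no solution, so no point of $\mathcal{R}_q$ lies over $\tilde\rho$, and this is precisely the improper impasse case; and if $g(\tilde\rho)=f(\tilde\rho)=0$ then every value of $u^{(q)}$ solves the equation, so the whole fibre lies in $\mathcal{R}_q$ and, as noted, consists of singularities.

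For the count in (i)--(iv) the plan is to read off everything from the quadratic equation \eqref{eq:irreg2} for the coordinate $\bar u^{(q)}$, whose roots are exactly the values of $\bar u^{(q)}$ for which the corresponding singularity in the fibre is irregular (this is the content of equation \eqref{eq:irreg} rewritten via the splitting $C^{(q)}=C^{(q-1)}+u^{(q)}C_{q-1}$). The whole proposition is then a case analysis of this quadratic according to whether its leading coefficient $-C_{q-1}(g)(\tilde\rho)$ vanishes. If $C_{q-1}(g)(\tilde\rho)\neq0$ it is a genuine quadratic, hence has two roots counted with multiplicity in $\RR$ exactly when its discriminant is nonnegative; a direct computation shows this discriminant equals the left-hand side of \eqref{eq:disc}, giving case (i). If $C_{q-1}(g)(\tilde\rho)=0$ the equation degenerates to the linear (or constant) equation $C^{(q-1)}(f)(\tilde\rho)+[C_{q-1}(f)(\tilde\rho)-C^{(q-1)}(g)(\tilde\rho)]\bar u^{(q)}=0$: if the coefficient of $\bar u^{(q)}$ is nonzero there is a unique root (case (ii)); if that coefficient vanishes but $C^{(q-1)}(f)(\tilde\rho)\neq0$ there is no root, so no irregular singularity in the fibre (case (iii)); and if both vanish the equation is $0=0$, satisfied by every $\bar u^{(q)}$, so the entire fibre consists of irregular singularities (case (iv)).

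I do not anticipate a serious obstacle; the proof is essentially bookkeeping once \eqref{eq:irreg2} is in hand. The only point requiring a little care is the computation of the discriminant of \eqref{eq:irreg2}: writing the quadratic as $A(\bar u^{(q)})^2+B\bar u^{(q)}+C=0$ with $A=-C_{q-1}(g)(\tilde\rho)$, $B=C_{q-1}(f)(\tilde\rho)-C^{(q-1)}(g)(\tilde\rho)$, $C=C^{(q-1)}(f)(\tilde\rho)$, one has discriminant $B^2-4AC=[C_{q-1}(f)(\tilde\rho)-C^{(q-1)}(g)(\tilde\rho)]^2+4C^{(q-1)}(f)(\tilde\rho)C_{q-1}(g)(\tilde\rho)$, which is exactly \eqref{eq:disc}; the sign convention on $A$ is harmless since only $B^2-4AC$ matters. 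A secondary subtlety worth a sentence is the phrase ``counted with multiplicity'': when the discriminant in \eqref{eq:disc} vanishes the quadratic has a double root, which corresponds to a single point of the fibre being an irregular singularity, and the statement's wording is consistent with this reading. Beyond that, the identification of irregular singularities with roots of \eqref{eq:irreg2} was already done in the text immediately preceding the proposition, so it may simply be invoked.
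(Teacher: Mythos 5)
Your proposal is correct and follows exactly the route the paper intends: the trichotomy is read off from the observations on $\widetilde{\mathcal{R}}_{q-1}$ preceding the proposition, and cases (i)--(iv) are the straightforward case analysis of the quadratic \eqref{eq:irreg2} in $\bar{u}^{(q)}$ according to whether its leading coefficient $-C_{q-1}(g)(\tilde{\rho})$ vanishes, with the discriminant computation matching \eqref{eq:disc}. The paper gives no separate proof beyond asserting that these observations lead "immediately" to the result, so your write-up simply supplies the bookkeeping it leaves implicit.
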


Obviously, the first case is the generic one and the non-generic cases are
above ordered by their codimension.  We see that proper impasse points
always arise below irregular singularities of $\mathcal{R}_{q}$.  But the
first and the third case show that proper impasse points can exist without
the presence of an irregular singularity.  In the generic first case, this
happens when the solutions of \eqref{eq:irreg2} are complex.  In such a
situation a classical nonlinear singularity analysis, as e.\,g.\ described
in \cite{via:geoode}, would yield nothing.  We are then dealing with a
truly quasi-linear phenomenon requiring a special analysis based on the
vector field $Y$.  Concrete instances will be studied below in Example
\ref{ex:foeq}.

\section{Weak Generalised Solutions}
\label{sec:weaksol}

Instead of studying the original equation \eqref{eq:qlode} in
$\mathcal{J}_{q}$, we may analyse the projected vector field $Y$ living on
$\mathcal{J}_{q-1}$ which means that we have transformed a non-autonomous
implicit problem into an autonomous explicit one.  This idea furthermore
leads naturally to weaker notions of solutions, as we now no longer have to
require the existence of derivatives of order $q$.

\begin{definition}\label{def:geosol}
  A \emph{weak generalised solution} of the quasi-linear differential
  equation \eqref{eq:qlode} is a one-dimensional invariant manifold
  $\mathcal{N}\subset\mathcal{J}_{q-1}$ of the vector field~$Y$, i.\,e.\ we
  have at every point $\rho\in\mathcal{N}$ that
  $Y_{\rho}\in T_{\rho}\mathcal{N}$.  A weak generalised solution
  $\mathcal{N}$ is \emph{proper}, if in addition
  $T\mathcal{N}\subseteq\mathcal{C}^{(q-1)}$ and there does not exist a
  point $x\in\RR$ such that $\mathcal{N}\subseteq(\pi^{q-1})^{-1}(x)$.  The
  projection $\pi^{q-1}_{0}(\mathcal{N})\subset\mathcal{J}_{0}$ of a proper
  weak generalised solution is a \emph{weak geometric solution}.
\end{definition}

\begin{remark}
  Note the difference in the definitions of generalised and weak
  generalised solutions: in Definition \ref{def:gensol} we used
  \emph{integral} manifolds of the Vessiot distribution
  $\mathcal{V}[\mathcal{R}_{q}]$, whereas Definition \ref{def:geosol} is
  based on \emph{invariant} manifolds of the projected vector field $Y$.
  This difference is due to the opposite behaviour of the Vessiot
  distribution and its projection at singularities and impasse points,
  respectively.  If $\rho_{q}\in\mathcal{R}_{q}$ is an irregular
  singularity, then $\dim{\mathcal{V}_{\rho_{q}}[\mathcal{R}_{q}]}$ jumps
  to an higher value.  By contrast, a proper impasse point $\rho_{q-1}$ is
  a stationary point of the vector field~$Y$ and hence the dimension of the
  projected Vessiot distribution jumps there to a lower value.  Our
  definitions are always formulated in such a way that it is possible to
  have (weak) generalised solutions going through the singularity or the
  impasse point, respectively.
\end{remark}

Again, only proper weak generalised solutions are of interest for the
analysis of initial value problems.  It is obvious that we have to exclude
generalised solutions lying completely in the fibre above a point
$x\in\RR$.  Superficially seen, it may seem as if the first condition for a
proper weak generalised solution was always automatically satisfied, as the
vector field $Y$ is constructed with the help of the Vessiot spaces and
thus of contact vector fields.  However, if $\mathcal{N}$ consists entirely
of stationary points of $Y$, then it is trivially an invariant manifold,
but there is no reason why its tangent spaces should lie in the contact
distribution.  Finding invariant manifolds at a stationary point of an
autonomous vector field represents a standard task in dynamical systems
theory.  In particular, we can apply the (un)stable and the centre manifold
theorem, respectively (see e.\,g.\ \cite{lp:deds}).

\begin{example}\label{ex:appimp}
  Like for an irregular singularity, it is possible that only a unique
  proper weak generalised solution passes through an impasse point.  As a
  concrete example, we consider an autonomous quasi-linear second-order
  equation of the form $g(u)u''=f(u')$ where we assume the existence of
  values $\bar{u}$ and $\bar{u}'$ such that $g(\bar{u})=f(\bar{u}')=0$ and
  $\bar{u}'g'(\bar{u})f'(\bar{u}')<0$ (this implies in particular that we
  have simple zeros of $g$ and $f$ respectively).  Projection of the
  Vessiot distribution yields the vector field
  $Y=g(u)\partial_{x}+u'g(u)\partial_{u}+f(u')\partial_{u'}$.  For
  arbitrary values of $\bar{x}$, the points $(\bar{x},\bar{u},\bar{u}')$
  are stationary points of $Y$ and thus proper impasse points.  The
  Jacobian of the field $Y$ at any of these points has three simple
  eigenvalues: $0$, $\bar{u}'g'(\bar{u})$ and $f'(\bar{u}')$.  By our
  assumptions, the latter two eigenvalues possess opposite signs.  The
  centre manifold consists entirely of these stationary points (and is thus
  unique \cite[Cor.~3.3]{js:cm}).  At any of them, the tangent space of the
  centre manifold is trivially spanned by the vector $\partial_{x}$.
  However, our assumptions imply that $\bar{u}'\neq0$ and thus this vector
  does not lie in the contact distribution, as it only contains the
  transversal vector $\partial_{x}+\bar{u}'\partial_{u}$.  Consequently,
  the centre manifold does not define a proper weak generalised solution.
  As at any point with $u$-coordinate equal to $\bar{u}$ the vector field
  $Y$ becomes vertical, it is easy to see that the invariant manifold
  corresponding to the last eigenvalue is simply the fibre
  $(\pi^{1}_{0})^{-1}(\bar{x},\bar{u})$ and hence also does not define a
  proper weak generalised solution.  Only the invariant manifold
  corresponding to the second eigenvalue leads to a proper weak generalised
  solution and one can show that the corresponding weak geometric solution
  is actually a strong solution.  Again the singular behaviour appears in
  the relation to the neighbouring generalised solutions.  If one considers
  the vector field $Y$ restricted to the $u$-$u'$ plane, then the point
  $(\bar{u},\bar{u}')$ is a saddle point so that the generalised solutions
  cannot define a regular foliation.
\end{example}

For the remainder of this section, we assume that the functions $f$ and $g$
are not only smooth, but analytic. Then a one-dimensional invariant
manifold $\mathcal{N}$ consists either entirely of stationary points or the
stationary points lie discrete and the points between two neighbouring
stationary points form an integral curve of the vector field $Y$ which is
an analytic manifold.  However, if a weak generalised solution
$\mathcal{N}$ consists of several integral curves, then it is not analytic
at the connecting points.  More precisely, we obtain the following result.

\begin{proposition}\label{prop:geosol}
  A weak geometric solution of an analytic differential equation
  \eqref{eq:qlode} which comes from a weak generalised solution containing
  only a discrete set of stationary points is composed of graphs of
  functions.  If $(\alpha,\omega)$ is the maximal open interval of
  definition of such a function $\phi$ and either $\alpha$ or $\omega$ is
  finite, then the function $\phi$ can be continued to $\alpha$ or
  $\omega$, respectively, and it is there $q-1$ times continuously
  differentiable, but its $q$th derivative blows up.
\end{proposition}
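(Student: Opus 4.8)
The plan is to cut a weak generalised solution along its (by assumption discrete) stationary points, to identify each resulting arc as the graph of an analytic strong solution, and then to analyse the only way such an arc can terminate at a finite value of $x$: by running into an impasse point. Since $f$ and $g$ are analytic, $Y$ is analytic, so away from the stationary points a weak generalised solution $\mathcal N$ decomposes into open integral curves of $Y$, each an analytic curve. Parametrise one such curve by $\gamma(s)=\bigl(x(s),u(s),u^{(1)}(s),\dots,u^{(q-1)}(s)\bigr)$ with $\gamma'=Y\circ\gamma$; by \eqref{eq:projvd} this reads $x'=g\circ\gamma$, $(u^{(i)})'=(g\circ\gamma)\,u^{(i+1)}$ for $0\le i<q-1$ and $(u^{(q-1)})'=f\circ\gamma$. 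The analytic function $s\mapsto g(\gamma(s))$ either vanishes identically — then $\gamma$ lies in a single fibre of $\pi^{q-1}$ and projects to a point — or has only isolated zeros; on each maximal zero-free subinterval it keeps a fixed sign, so $x(\cdot)$ is a strictly monotone analytic diffeomorphism onto an open interval $(\alpha,\omega)$. With $s(\cdot)$ its inverse and $\phi:=u\circ s$, the chain rule and the ODEs above give inductively $\phi^{(i)}=u^{(i)}\circ s$ for $i\le q-1$ and $\phi^{(q)}=(f\circ\gamma\circ s)/(g\circ\gamma\circ s)$, well defined and analytic there because $g\circ\gamma\ne0$; thus $\phi$ is an analytic strong solution of \eqref{eq:qlode} on $(\alpha,\omega)$ whose graph is the corresponding arc of the weak geometric solution. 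This settles the first assertion.

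Now suppose $(\alpha,\omega)$ is maximal with $\omega<\infty$ (finite $\alpha$ is symmetric). In the parametrisation by $x$, $\phi$ solves the explicit equation $\phi^{(q)}=f/g$ on the open set $\{g\ne0\}\subset\mathcal J_{q-1}$, so by the standard continuation theorem the prolonged curve $x\mapsto\bigl(x,\phi(x),\dots,\phi^{(q-1)}(x)\bigr)$ must leave every compact subset of $\{g\ne0\}$ as $x\to\omega^-$; since $\omega$ is finite it cannot escape to infinity (here the reduced form and the absence of singular integrals enter) and hence converges to a point $p$ with $g(p)=0$, i.e.\ an impasse point. Then $\phi^{(i)}(x)\to u^{(i)}(p)$ for $0\le i\le q-1$, and since a function on $(\alpha,\omega)$ all of whose derivatives up to order $q-1$ extend continuously to $\omega$ is itself $\mathcal C^{q-1}$ on $(\alpha,\omega]$ (a repeated application of the mean value theorem), $\phi$ continues $\mathcal C^{q-1}$ to $\omega$. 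It remains to see that $\phi^{(q)}=f/g$ blows up as $x\to\omega^-$, and since $\phi^{(q)}$ is a ratio of analytic functions of $s$ along $\gamma$ it has a definite limit in $[-\infty,+\infty]$, so ``blows up'' only has to be distinguished from ``tends to a finite value''. If $p$ is an improper impasse point this is immediate, because $f(p)\ne0=g(p)$.

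The work is in the proper impasse case, where $f(p)=g(p)=0$. Then $p$ is a stationary point of $Y$, hence one of the discrete stationary points carried by $\mathcal N$, and $\gamma$ reaches $p$ along an invariant direction lying in $\mathcal C^{(q-1)}|_p$. That direction is not tangent to $\widetilde{\mathcal R}_{q-1}$ (which consists entirely of stationary points, whereas $\mathcal N$ has only discrete ones) and it is not vertical ($\mathcal N$ is proper, i.e.\ not contained in a fibre of $\pi^{q-1}$), so it is the remaining transversal direction, tangent to a one-dimensional invariant manifold $W$ of $Y$ through $p$ which near $p$ is a graph $x\mapsto\bigl(x,\eta_0(x),\dots,\eta_{q-1}(x)\bigr)$ with $\eta_i'=\eta_{i+1}$ and $\eta_{q-1}'=f/g$. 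Were $W$ of class $\mathcal C^q$ near $p$, then $\eta_0$ would be a $\mathcal C^q$ strong solution extending $\phi$ beyond $x=\omega$, contradicting the maximality of $(\alpha,\omega)$; therefore $W$ is only $\mathcal C^{q-1}$, $\eta_{q-1}=\eta_0^{(q-1)}$ fails to be differentiable at $\omega$, and consequently $\phi^{(q)}=\eta_{q-1}'=f/g$ cannot have a finite limit and thus blows up.

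The step I expect to require the most care is this last one: determining the invariant direction along which a proper weak generalised solution must enter a stationary point of $Y$ — which rests on the local eigenstructure of $Y$ at that point — and controlling the regularity of the associated invariant manifold $W$. The auxiliary claim that a maximal arc with a finite endpoint actually converges to an impasse point, rather than escaping to infinity, is more routine but, because $f$ and $g$ are not assumed bounded, also not entirely free.
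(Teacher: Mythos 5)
Your first half is correct and is essentially the paper's argument: you cut $\mathcal{N}$ along the zeros of the analytic function $g|_{\mathcal{N}}$, use the monotonicity of $x$ on the zero-free arcs to reparametrise, and recover $\phi^{(i)}=u^{(i)}\circ s$ (the paper reaches the same conclusion by citing that $Y$ is a contact field); the convergence of the derivatives up to order $q-1$ at a finite endpoint and the $\mathcal{C}^{q-1}$ extension also match. The first genuine problem is the claim that a maximal arc with $\omega<\infty$ ``cannot escape to infinity''. This is not a consequence of the reduced form or of the absence of singular integrals: already for $xu'=u^{2}$ the field $Y=x\partial_{x}+u^{2}\partial_{u}$ has an integral curve whose projection is the graph of $u=1/(1-\ln x)$, which blows up at the finite value $x=e$ where $g\neq0$. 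The paper avoids this by locating the endpoint directly as a zero of $g|_{\mathcal{N}}$ (an isolated zero along an integral curve, or a stationary $\alpha$/$\omega$-limit point); your detour through the escape lemma replaces a tacit restriction by an explicitly false step.

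The second, more serious problem is the proper-impasse paragraph. (i) A trajectory approaching a stationary point of $Y$ need not have a limiting tangent direction at all: in Example \ref{ex:foeq} the solutions spiral into the impasse point. (ii) Properness of $\mathcal{N}$ only forbids $\mathcal{N}$ from lying inside a fibre of $\pi^{q-1}$; it does not forbid a vertical tangent at the single point $p$. In the paper's examples where the blow-up actually happens at a proper impasse point (the critical resonance of Theorem \ref{thm:Lres}, the saddle-node of Example \ref{ex:foeq}) the trajectory enters $p$ \emph{vertically}, and that verticality is exactly the blow-up of $\phi^{(q)}=du^{(q-1)}/dx$ -- so your argument rules out precisely the case it needs. (iii) $\widetilde{\mathcal{R}}_{q-1}$ does not consist of stationary points (it contains every point with $g\neq0$), so the ``remaining transversal direction'' and the unique one-dimensional invariant manifold $W$ tangent to it are not available without spectral hypotheses that a general proper impasse point does not satisfy. (iv) ``$W$ is only $\mathcal{C}^{q-1}$, hence $f/g$ cannot have a finite limit and thus blows up'' is a double non sequitur: failure of one-sided differentiability does not force a difference quotient to tend to infinity, and your earlier reduction of ``blows up'' to ``not finite'' via analyticity in $s$ is unavailable here, since a stationary endpoint is reached only as $s\to\pm\infty$, where a quotient of analytic functions need not have any limit in $[-\infty,+\infty]$. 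The paper's route is entirely different and does not pass through the eigenstructure of $p$: at the endpoint the field $Y$ is vertical, so by Remark \ref{rem:vdprol} no finite value of $u^{(q)}$ is compatible with prolonging the graph, whence $\phi^{(q)}$ blows up.
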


\begin{proof}
  Let $\mathcal{N}\subset\mathcal{J}_{q-1}$ be a proper weak generalised
  solution of \eqref{eq:qlode} which does not consists entirely of
  stationary points and
  $\widetilde{\mathcal{N}}=\pi^{q-1}_{0}(\mathcal{N})$ the corresponding
  weak geometric solution.  The form of $\widetilde{\mathcal{N}}$ depends
  on the behaviour of the restriction of the function $g$ to the curve
  $\mathcal{N}$.  We first note that $g$ cannot vanish identically on
  $\mathcal{N}$, as then $Y$ was everywhere on $\mathcal{N}$ vertical with
  respect to the fibration $\pi^{q-1}$ and $\mathcal{N}$ was not a proper
  weak generalised solution.

  If $\mathcal{C}\subseteq\mathcal{N}$ is a connected open subset on which
  $g$ vanishes nowhere, then the field $Y$ is everywhere on $\mathcal{C}$
  transversal to the fibration $\pi^{q-1}$ and we may use the independent
  variable $x$ as parameter for the curve $\mathcal{C}$.  This implies that
  $\mathcal{C}$ is the graph of a section
  $(\alpha,\omega)\rightarrow\mathcal{J}_{q-1}$.  Since the tangent field
  $Y$ of $\mathcal{C}$ is a contact field, it follows from the properties
  of the contact distribution (see e.\,g.\ \cite[Prop.~2.1.6]{wms:invol})
  that this section is the prolongation of the section associated to a
  function $\phi:(\alpha,\omega)\rightarrow\RR$.  Hence the corresponding
  piece $\widetilde{\mathcal{C}}$ of the weak geometric solution
  $\widetilde{\mathcal{N}}$ is the graph of this function $\phi$.

  If $\mathcal{C}\subseteq\mathcal{N}$ is a connected open subset of the
  curve $\mathcal{N}$ which is an integral curve of $Y$, then $\mathcal{C}$
  possesses an analytic parametrisation and we may consider the restriction
  $\hat{g}=g|_{\mathcal{C}}$ as a univariate analytic function of the curve
  parameter.  Thus $\hat{g}$ either vanishes identically or it has only
  isolated zeros.  Let $s$ be such an isolated zero and $\overline{x}$ the
  $x$-coordinate of the corresponding point $\rho\in\mathcal{N}$.
  Alternatively, let $\overline{x}$ be the $x$-coordinate of a stationary
  point of the field $Y$ which is an $\omega$ limit point of the integral
  curve $\mathcal{C}$.

  For a sufficiently close value $\underline{x}<\overline{x}$, there exists
  a function $\phi:(\underline{x},\overline{x})\rightarrow\RR$ such that
  its graph defines a piece of the weak geometric solution
  $\widetilde{\mathcal{N}}$.  We study now what happens in the limit
  $x\rightarrow\overline{x}$.  For the derivatives of $\phi$ up to order
  $q-1$, it follows immediately from the continuity of the weak generalised
  solution $\mathcal{N}$ that they converge against the value of the
  corresponding coordinate of $\rho$.  By assumption, the vector field $Y$
  is vertical at the point $\rho$.  Hence, according to Remark
  \ref{rem:vdprol}, the $q$th derivative of $\phi$ blows up at
  $\overline{x}$.  Of course, the same happens, if we consider a function
  defined to the right of the point $\overline{x}$ or a stationary point
  which is an $\alpha$ limit point of the curve $\mathcal{C}$.
\end{proof}

An \emph{initial value problem} for the quasi-linear equation
\eqref{eq:qlode} consists of prescribing a point
$\tilde{\rho}\in\mathcal{J}_{q-1}$ and asking for proper weak generalised
solutions going through it.  Improper impasse points show then a similar
behaviour as regular singularities for fully non-linear differential
equations.  Therefore we obtain in close analogy to Theorem
\ref{thm:regsing} the following existence and (non-)uniqueness theorem.

\begin{theorem}\label{thm:exnuni}
  Let $\mathcal{U}\subseteq\mathcal{J}_{q-1}$ be an open domain without
  proper impasse points of the quasi-linear differential equation
  $\mathcal{R}_{q}\subset\mathcal{J}_{q}$.  Then there exists through every
  point $\tilde{\rho}\in\mathcal{U}$ a unique weak generalised solution
  $\mathcal{N}_{\tilde{\rho}}$ of $\mathcal{R}_{q}$.  If $\tilde{\rho}$ is
  a regular point, then the corresponding weak geometric solution
  $\widetilde{\mathcal{N}}_{\tilde{\rho}}=
  \pi^{q-1}_{0}(\mathcal{N}_{\tilde{\rho}})$ is a strong solution (in some
  neighbourhood of $\bar{x}=\pi^{q-1}(\tilde{\rho})$).  If $\tilde{\rho}$
  is an improper impasse point, there are three possibilities depending on
  the behaviour of the function $g$ along the curve
  $\mathcal{N}_{\tilde{\rho}}$:
  \begin{enumerate}[label=(\roman*)]
  \item If the restricted function
    $\hat{g}=g|_{\mathcal{N}_{\tilde{\rho}}}$ vanishes identically on the
    curve $\mathcal{N}_{\tilde{\rho}}$, then $\mathcal{N}_{\tilde{\rho}}$
    is a vertical line and thus is not a proper weak generalised solution.
    Hence no weak geometric solution exists.
  \item If $\hat{g}$ does not change its sign when passing through
    $\tilde{\rho}$, then the weak geometric solution
    $\widetilde{\mathcal{N}}_{\tilde{\rho}}$ is the graph of a function
    $\phi$ which is at the point $\bar{x}$ only $q-1$ times continuously
    differentiable, as its $q$th derivative blows up.
  \item If the sign of $\hat{g}$ changes when passing through
    $\tilde{\rho}$, then $\widetilde{\mathcal{N}}_{\tilde{\rho}}$ consists
    locally of the graphs of two functions $\phi_{1}$, $\phi_{2}$ separated
    by $\tilde{\rho}$.  If the sign changes from minus to plus, then the
    two functions are defined only for $x\geq\bar{x}$, i.\,e.\ the initial
    value problem has two solutions starting in $\bar{x}$.  In the opposite
    case the functions are defined only for $x\leq\bar{x}$, i.\,e.\ the
    initial value problem has two solutions ending in $\bar{x}$.
  \end{enumerate}
\end{theorem}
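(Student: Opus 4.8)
The plan is to reduce everything to the analysis of the explicit autonomous vector field $Y$ on $\mathcal{J}_{q-1}$ given by \eqref{eq:projvd}, since by assumption $f$ and $g$ are everywhere defined and $Y$ is a genuine smooth vector field. First I would handle the case where $\tilde\rho$ is not a proper impasse point. If $\tilde\rho$ is a regular point, then $Y(\tilde\rho)$ has a non-vanishing $\partial_x$-component, so the unique integral curve of $Y$ through $\tilde\rho$ (existence and uniqueness by Picard--Lindel\"of applied to the explicit system) can locally be parametrised by $x$; since $Y\in\mathcal{C}^{(q-1)}$, the resulting section is a prolonged section (using \cite[Prop.~2.1.6]{wms:invol}), so its projection is the graph of a strong solution. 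If $\tilde\rho$ is an improper impasse point, then $g(\tilde\rho)=0$ but $Y(\tilde\rho)\neq0$, so $Y(\tilde\rho)$ is a non-zero vertical vector; the integral curve $\mathcal{N}_{\tilde\rho}$ through $\tilde\rho$ still exists and is unique, and it is transversal to the fibre $(\pi^{q-1})^{-1}(\bar x)$ exactly off $\tilde\rho$ in a punctured neighbourhood (because $Y(\tilde\rho)$ is vertical but $Y$ depends continuously on the point). This is the curve $\mathcal{N}_{\tilde\rho}$ whose projection behaviour we must analyse.

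Next I would carry out the trichotomy for an improper impasse point. Consider $\hat g=g|_{\mathcal{N}_{\tilde\rho}}$, a smooth function of the curve parameter $s$ with $\hat g(\bar s)=0$ at the point corresponding to $\tilde\rho$. Case (i): if $\hat g\equiv0$ on $\mathcal{N}_{\tilde\rho}$, then along the whole curve the $\partial_x$-component of $Y$ vanishes, so $\mathcal{N}_{\tilde\rho}$ lies in a single fibre $(\pi^{q-1})^{-1}(\bar x)$; this violates the properness condition in Definition~\ref{def:geosol}, so no weak geometric solution arises. Case (ii): if $\hat g$ does not change sign at $\bar s$, then on each one-sided punctured neighbourhood of $\bar s$ the curve is transversal to the fibration and can be parametrised by $x$, and the two one-sided pieces glue to a single graph over an interval containing $\bar x$; the argument of Proposition~\ref{prop:geosol} (invoking Remark~\ref{rem:vdprol}) then shows the resulting function $\phi$ is $\mathcal{C}^{q-1}$ at $\bar x$ with blowing-up $q$th derivative. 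Case (iii): if $\hat g$ changes sign at $\bar s$, then the two one-sided pieces of $\mathcal{N}_{\tilde\rho}$ project onto graphs lying on the \emph{same} side of $\bar x$; whether this side is $x\ge\bar x$ or $x\le\bar x$ is dictated by the sign of the $\partial_x$-component $g$ immediately before and after $\bar s$ (positive $g$ means $x$ increasing along the curve). A sign change from minus to plus means the curve parameter increases while $x$ first decreases then increases through $\bar x$, so both branches sit at $x\ge\bar x$, giving two solutions starting at $\bar x$; the opposite sign change gives two solutions ending at $\bar x$.

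The main obstacle I anticipate is the careful bookkeeping in case (iii): one must track, for the single smooth integral curve $\mathcal{N}_{\tilde\rho}$ passing \emph{through} the impasse point (it is not a stationary point, so the curve genuinely crosses), how the $x$-coordinate behaves as a function of arclength near $\bar s$. Since $dx/ds$ equals $g$ up to a positive reparametrisation factor, $x(s)$ has a local extremum at $\bar s$ precisely when $g$ changes sign there, and the extremum is a minimum or maximum according to the direction of the sign change; this is the analytic content that produces the dichotomy between ``two solutions starting'' versus ``two solutions ending'', and the sign conventions must be matched exactly to Definition~\ref{def:impasse} and to the statement. All the regularity assertions are then inherited verbatim from Proposition~\ref{prop:geosol} and Remark~\ref{rem:vdprol}, so no new estimates are needed; the proof is essentially a translation of the dynamical picture of $Y$ into statements about $\widetilde{\mathcal{N}}_{\tilde\rho}$, in complete parallel to the proof of Theorem~\ref{thm:regsing}.
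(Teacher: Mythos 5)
Your proposal is correct and follows exactly the route the paper intends: the paper in fact gives no written proof of Theorem~\ref{thm:exnuni}, stating only that it is obtained ``in close analogy to Theorem~\ref{thm:regsing}'', and your argument --- uniqueness from the nowhere-vanishing explicit field $Y$, the contact-field property to recover prolonged sections, and the sign analysis of $dx/ds=\hat g$ along the integral curve to produce the trichotomy --- is precisely the intended filling-in of that analogy, with the regularity claims delegated to Proposition~\ref{prop:geosol} and Remark~\ref{rem:vdprol} as in the paper. The only cosmetic slip is your parenthetical claim that continuity of $Y$ forces transversality on a punctured neighbourhood of $\tilde{\rho}$ along the curve; this is not needed, since the case hypotheses on $\hat g$ already supply it where it is used.
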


\begin{remark}
  In the first case of Theorem \ref{thm:exnuni}, the corresponding initial
  value problem is obviously unsolvable, as not even a weak geometric
  solution exists.  In the first-order case, the projection of the weak
  generalised solution is a vertical line which is of some interest, if it
  extends over the whole $u$-axis, as it then represents a separatrix.  In
  the absence of proper impasse points on this line, no other weak
  geometric solution can cross it.  Hence, if $\bar{x}$ is the
  $x$-coordinate of the line, then no strong solution can be defined on
  an open interval $(\alpha,\omega)$ that contains $\bar{x}$.

  In all other cases, the initial value problem at an improper impasse
  point is solvable only in a weak sense, as the weak geometric solution is
  the graph of a function which is at the initial point only $q-1$ times
  differentiable.  In the second case we have then a unique weak solution,
  whereas in the third and generic case the initial value problem has
  exactly two weak solutions (and depending on the direction of the sign
  change one should speak of a ``terminal value problem'').  A classical
  analytical proof of a similar result can be found in
  \cite[Thm.~4.1]{pjr:singular}.
\end{remark}

We can now continue the discussion started after Proposition
\ref{prop:impasse} of situations that cannot be handled by a classical
fully nonlinear analysis.

\begin{example}\label{ex:foeq}
  A scalar first-order quasi-linear differential equation is of the form
  $g(x,u)u'=f(x,u)$ and thus defined by two functions
  $f,g:\mathcal{J}_{0}\rightarrow\RR$.  Our approach requires then the
  analysis of the vector field $Y=g(x,u)\partial_{x}+f(x,u)\partial_{u}$
  defined everywhere on the plane $\mathcal{J}_{0}$.  Thus obviously any
  phenomenon that can appear at a stationary point of a planar vector field
  may also arise at a proper impasse point of first-order quasi-linear
  equation.

  Let $\tilde{\rho}=(\bar{x},\bar{u})$ be a proper impasse point of our
  equation, i.\,e.\ a stationary point of the field $Y$.  The local
  behaviour of the trajectories of $Y$ around $\tilde{\rho}$ is largely
  determined by the Jacobian of $Y$ evaluated at $\tilde{\rho}$; we call it
  $\tilde{J}$.  The genericity condition of Proposition \ref{prop:impasse}
  takes the simple form $g_{u}(\tilde{\rho})=0$ and the discriminant
  deciding on the existence of real irregular singularities in the fibre
  over $\tilde{\rho}$ is given by
  \begin{equation}\label{eq:delta}
    \Delta=\bigl[f_{u}(\tilde{\rho})-g_{x}(\tilde{\rho})\bigr]^{2} +
    4f_{x}(\tilde{\rho})g_{u}(\tilde{\rho})\,.
  \end{equation}
  In a straightforward computation one can show that $\Delta$ is also the
  discriminant of the characteristic polynomial of $\tilde{J}$.

  Thus if there are only complex irregular singularities above
  $\tilde{\rho}$, then the eigenvalues of $\tilde{J}$ are complex, too.  If
  their real part does not vanish, then infinitely many weak geometric
  solutions approach the impasse point.  However, none of them has a well
  defined tangent in the limit.  This implies that it is not possible to
  combine two of them with the impasse point to a $\mathcal{C}^{1}$
  manifold.  Hence no strong solution can go through the impasse point.  If
  the eigenvalues are purely imaginary, then no weak geometric solution
  approaches the impasse point.

  A concrete example for the former case is provided by the functions
  $g(x,u)=u$ and $f(x,u)=u-x$.  Here the impasse points form the $x$-axis
  in $\mathcal{J}_{0}=\RR^{2}$ and the only proper impasse point is the
  origin.  The eigenvalues of the Jacobian of the (here even linear) vector
  field $Y$ at the origin are $\tfrac{1}{2}(1\pm\sqrt{3}i)$.  All weak
  geometric solutions spiral towards the origin.  Only those parts of them
  that lie completely either in the upper or in the lower half plane
  represent the graph of strong solution.  The improper impasse points
  represent turning points where we see the behaviour described in Theorem
  \ref{thm:exnuni}(iii) with two strong solutions either starting or
  ending.

  As a concrete example corresponding to the case (iii) of Proposition
  \ref{prop:impasse}, we consider the choice $g(x,u)=x^{2}$ and
  $f(x,u)=u^{2}+x$.  Again the origin is the only proper impasse point and
  there do not exist any irregular singularities in the fibre above it.  At
  none of the points in this fibre the gradient $(2xu_1 - 1,-2u, x^2)$ of
  the defining equation for $\mathcal{R}_1$ vanishes.  Thus the fibre does
  not contain any algebraic singularity.  We study the phase portrait of
  the vector field
  \begin{equation}\label{eq:foql}
    Y= x^2 \partial_x + (u^2 + x) \partial_u
  \end{equation}
  near the origin which is obviously an isolated stationary point.  One
  easily verifies that one is dealing with a nilpotent stationary point
  which is, according to \cite[Theorem 3.5 (4.i2)]{dla:qualplan}, a
  saddle-node.

  \begin{figure}[ht]
    \centering
    \includegraphics[width=6cm]{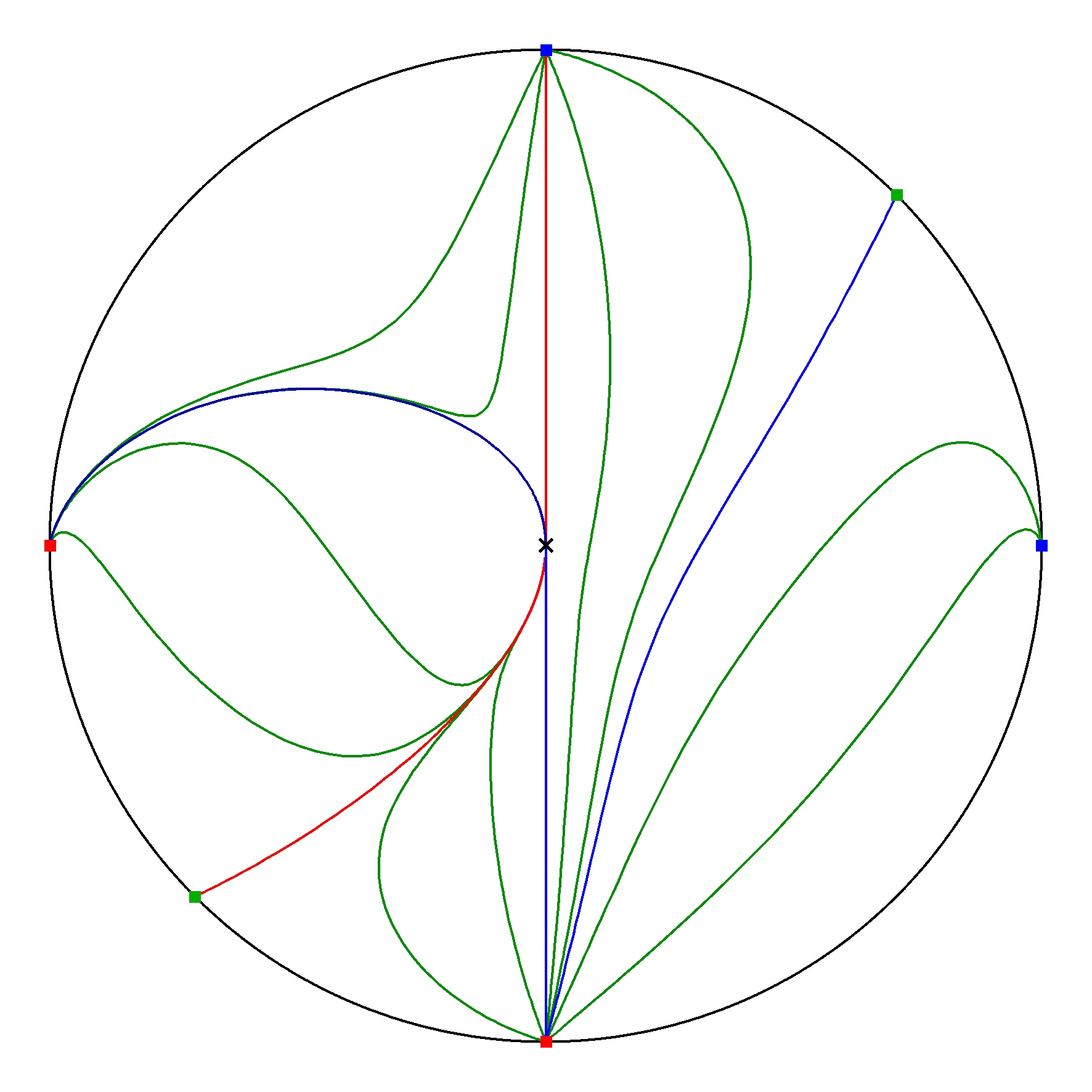}
    \caption{Phase portrait of \eqref{eq:foql} on the Poincar\'e disc}
    \label{fig:foql}
  \end{figure}

  For a detailed analysis of the phase portrait, we used the programme P4
  (described in \cite[Chapt.~9]{dla:qualplan}).  Via quasi-homogeneous
  blow-ups, it determines the existence of two hyperbolic and two parabolic
  sectors and computes Taylor approximations of the separatrices between
  the sectors.  Figure \ref{fig:foql} shows the phase portrait of
  \eqref{eq:foql} on the Poincar\'e disc with the origin at the centre (see
  \cite[Chapt.~5]{dla:qualplan} for a detailed description of this
  presentation).  The blue and red curves represent separatrices and in
  each region two representative trajectories are plotted in green.  The
  two parabolic sectors are in the lower left part of the Poincar\'e disc
  bounded by two blue separatrices and separated by a red separatrix.

  For the original quasi-linear equation, we can make the following
  observations based on this phase portrait.  The only weak generalised
  solution going through the origin is the $u$-axis which does not induce a
  geometric solution.  Hence the initial value problem $u(0)=0$ does not
  possess any two-sided solutions.  All one-sided solutions are only
  defined for $x\leq0$.\footnote{If one takes a closer look at the surface
    $\mathcal{R}_{1}\subset\mathcal{J}_{1}$ defined by our equation, then
    one sees that it consists of two disjoint components: one containing
    all points with $x\leq0$ and one containing all points with $x>0$.
    Only asymptotically the two components meet in the ``point''
    $(0,0,\infty)$.}  The Taylor approximations of the red and the blue
  separatrices in the left half of the Poincar\'e disc show that they both
  enter the origin with a vertical tangent.  As all trajectories inside the
  two parabolic sectors tend asymptotically towards the red separatrix, all
  one-sided geometric solutions have a vertical tangent at the origin,
  implying that the corresponding functions are not differentiable for
  $x=0$.  Hence, our initial value problem possesses a one-parameter family
  of solutions each living in
  $\mathcal{C}^{0}\bigl((-\delta,0]\bigr)\cap
  \mathcal{C}^{\infty}\bigl((-\delta,0)\bigr)$ for some $\delta>0$.  All
  solutions in the upper parabolic sector are actually defined on the whole
  negative real axis and tend for $x\rightarrow-\infty$ against a finite
  value.  By contrast, all solutions in the lower parabolic sector become
  singular at a finite value $\delta>0$ depending on the solution.
\end{example}

\section{Proper Impasse Points}
\label{sec:propimp}

It follows immediately from their definition that proper impasse points are
stationary for the vector field $Y$.  Hence the analysis of the local
solution behaviour in their neighbourhood requires to understand the local
phase portrait of $Y$ at a stationary point.  However, one should note
important differences in the \emph{interpretation} of the results of such
an analysis.  For studying a quasi-linear equation, we do not really need
the specific vector field $Y$, but only the distribution generated by it.
Thus instead of $Y$ we may also take any vector field obtained by
multiplying $Y$ with a nowhere vanishing function.  As this includes the
field $-Y$, absolute signs of eigenvalues have no meaning for us; only
relative signs matter.  Furthermore, according to our definition of a weak
generalised solution, we are not directly interested in trajectories but in
one-dimensional invariant manifolds passing through the impasse point.
Hence for deciding the existence of weak generalised solutions of a certain
regularity, it is necessary to analyse whether it is possible to combine
two trajectories of $Y$ approaching the impasse point with the impasse
point to an invariant curve of the desired regularity.

In the case that the proper impasse point is a hyperbolic stationary point,
the stable manifold theorem asserts the existence of a unique stable and a
unique unstable manifold which are both smooth under our assumptions.  Any
trajectory approaching the impasse point must lie on one of these two
manifolds.  Hence also weak generalised solutions can only exist on these
manifolds.  If the (un)stable manifold is one-dimensional, it is
simultaneously a weak generalised solution.  However, it is still possible
that the manifold is vertical and thus does not define a proper weak
generalised solution.

Unfortunately, it is easy to see that for any scalar quasi-linear equation
\eqref{eq:qlode} of an order $q>1$, no proper impasse point can be
hyperbolic.  Indeed the special form of the projected vector field $Y$
given by \eqref{eq:projvd} trivially implies that its Jacobian possesses at
any point where $g$ vanishes zero eigenvalues.  This implies that the
situation becomes much more complicated, as the analysis of the centre
manifolds is more delicate.  First of all, we find in general infinitely
many centre manifolds.  For applications like centre manifold reductions,
these are usually considered as equivalent, as asymptotically they are
exponentially close.  If we assume for a moment that the centre manifolds
are one-dimensional, then each represents in our point of view a different
weak generalised solution and thus matters for uniqueness questions.
Secondly, the regularity of the centre manifolds may drop compared to the
regularity of the functions $f$ and $g$.  Finally, if a one-dimensional
centre manifold consists entirely of stationary points (and then is
automatically unique by \cite[Cor.~3.3]{js:cm}), it generally does not
define a proper weak generalised solution and hence does not even lead to a
weak geometric solution.  Higher-dimensional centre manifolds require a
much more sophisticated analysis (using e.\,g.\ normal forms), as many
different possibilities arise.

Finally, we come back to the remark following Proposition
\ref{prop:impasse} and consider the case that the fibre over the proper
impasse point does not contain an irregular singularity.  We will now show
that in such a ``truly quasi-linear'' situation only weak solutions may
exist.

\begin{proposition}\label{prop:qlimp}
  Let $\tilde{\rho}\in\mathcal{J}_{q-1}$ be a proper impasse point of the
  scalar quasi-linear equation $\mathcal{R}_{q}\subset\mathcal{J}_{q}$ such
  that for the restricted projection
  $\hat{\pi}_{q-1}^{q}:\mathcal{R}_{q}\rightarrow\mathcal{J}_{q-1}$ the
  fibre $\bigl(\hat{\pi}_{q-1}^{q}\bigr)^{-1}(\tilde{\rho})$ does not
  contain any irregular singularity.  Then there does not exist a strong
  solution $\phi$ with $\tilde{\rho}\in\im{j_{q-1}\sigma_{\phi}}$.  The
  fibre
  $\bigl(\hat{\pi}_{q-1}^{q}\bigr)^{-1}(\tilde{\rho})\subset\mathcal{J}_{q}$
  is a generalised solution and it does not intersect with any other
  generalised solution.
\end{proposition}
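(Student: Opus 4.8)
The plan is to exploit the correspondence, established in Remark \ref{rem:vdprol}, between points in the fibre $\bigl(\hat\pi^q_{q-1}\bigr)^{-1}(\tilde\rho)$ and the $u^{(q)}$-coordinate determined by the Vessiot equation, together with the characterisation of irregular singularities via \eqref{eq:irreg}. First I would argue for the non-existence of a strong solution. Suppose, for contradiction, that $\phi$ is a strong solution of \eqref{eq:qlode} with $\tilde\rho\in\im j_{q-1}\sigma_\phi$. Then $\phi^{(q)}(\bar x)$ exists, and the prolonged point $\rho=(\bar x,\bar\uv_{(q-1)},\phi^{(q)}(\bar x))$ lies in $\mathcal R_q$ in the fibre over $\tilde\rho$. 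Since $\tilde\rho$ is a proper impasse point we have $g(\tilde\rho)=f(\tilde\rho)=0$, so by Proposition \ref{prop:impasse} every point of $\mathcal J_q$ over $\tilde\rho$ lies in $\mathcal R_q$ and is a singularity; in particular $\rho$ is a singularity. Because $\phi$ is a genuine function, differentiating the relation $g\,u^{(q)}=f$ along $j_q\sigma_\phi$ at $\bar x$ — i.e.\ applying $C^{(q)}$ to $F=g u^{(q)}-f$ and using $g(\tilde\rho)=0$ — yields exactly $C^{(q)}(g)(\rho)\phi^{(q)}(\bar x)-C^{(q)}(f)(\rho)=0$, which is the irregularity condition \eqref{eq:irreg}. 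Hence $\rho$ is an \emph{irregular} singularity lying in the fibre over $\tilde\rho$, contradicting the hypothesis. (Equivalently: the formal derivative $D_xF=C^{(q)}(F)+C_q(F)u^{(q+1)}$ must vanish at the prolongation of $\rho$, and since $C_q(F)=g(\rho)=0$ this forces $C^{(q)}(F)(\rho)=0$, which is again \eqref{eq:irreg}.)

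Next I would show the fibre $\mathcal F:=\bigl(\hat\pi^q_{q-1}\bigr)^{-1}(\tilde\rho)\subset\mathcal J_q$ is itself a generalised solution. By Proposition \ref{prop:impasse}, $\mathcal F$ is a one-dimensional submanifold of $\mathcal R_q$ consisting entirely of singularities; it is a vertical line in the fibre direction $C_q=\partial_{u^{(q)}}$. To verify it is a generalised solution I must check that at each $\rho\in\mathcal F$ the tangent vector $C_q|_\rho$ lies in the Vessiot space $\mathcal V_\rho[\mathcal R_q]=T_\rho\mathcal R_q\cap\mathcal C^{(q)}|_\rho$. It lies in $T_\rho\mathcal R_q$ because $\mathcal F\subseteq\mathcal R_q$, and it lies in $\mathcal C^{(q)}$ by \eqref{eq:Cvert}. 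Thus $T_\rho\mathcal F=\langle C_q|_\rho\rangle\subseteq\mathcal V_\rho[\mathcal R_q]$, so $\mathcal F$ is a one-dimensional integral manifold of the Vessiot distribution, i.e.\ a generalised solution in the sense of Definition \ref{def:gensol} (a non-proper one, since it lies over the single point $\bar x$).

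Finally I would prove the non-intersection statement. Let $\mathcal N$ be any generalised solution with $\mathcal N\cap\mathcal F\neq\emptyset$, say $\rho_0\in\mathcal N\cap\mathcal F$. By hypothesis $\rho_0$ is a singularity but not an irregular one; by the dichotomy after Definition \ref{def:sing}, at a non-irregular singularity $C_q(F)(\rho_0)=0$ while $C^{(q)}(F)(\rho_0)\neq0$. Then \eqref{eq:vessdist} forces the coefficient $a$ to vanish, so $\mathcal V_{\rho_0}[\mathcal R_q]$ is one-dimensional and vertical, spanned by $C_q|_{\rho_0}$. Since $T_{\rho_0}\mathcal N\subseteq\mathcal V_{\rho_0}[\mathcal R_q]=\langle C_q|_{\rho_0}\rangle=T_{\rho_0}\mathcal F$, the curves $\mathcal N$ and $\mathcal F$ share the same one-dimensional tangent at $\rho_0$. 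Running this argument at \emph{every} point of $\mathcal N\cap\mathcal F$ and invoking uniqueness of integral curves of the smooth vertical distribution along $\mathcal F$ — or simply noting that the connected integral manifold of $\langle C_q\rangle$ through $\rho_0$ is the fibre line itself — shows that the component of $\mathcal N$ through $\rho_0$ coincides with (a piece of) $\mathcal F$. In particular $\mathcal N$ cannot be a second, distinct generalised solution transverse to $\mathcal F$: any generalised solution meeting $\mathcal F$ is contained in $\mathcal F$. The main obstacle I anticipate is the bookkeeping in the first step — making precise that differentiating the constraint along a hypothetical strong solution reproduces \eqref{eq:irreg} at the correct point — but this is exactly the quasi-linear specialisation of the ``formal derivative'' identity $\partial F/\partial u^{(q)}=\partial(D_xF)/\partial u^{(q+1)}$ already recorded before Remark \ref{rem:vdprol}, so no new estimates are needed.
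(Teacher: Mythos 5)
Your proposal is correct and proves all three assertions; for the second and third claims (the fibre is a vertical generalised solution, and any generalised solution meeting it has a vertical one\nobreakdash-dimensional Vessiot space there and hence coincides locally with the fibre) it follows essentially the paper's own argument, just spelling out the uniqueness step that the paper compresses into ``generalised solutions may intersect only in irregular singularities''. Where you genuinely diverge is in the logical order of the first claim: the paper establishes the fibre-as-generalised-solution and non-intersection statements first and then obtains the non-existence of a strong solution as an immediate corollary (a prolonged strong solution would be a generalised solution meeting the fibre), whereas you prove non-existence directly by differentiating the constraint $g\,u^{(q)}=f$ along a hypothetical strong solution and showing that this forces the irregularity condition \eqref{eq:irreg} at the prolonged point, contradicting the hypothesis. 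Your route has the merit of making explicit \emph{why} a strong solution would create an irregular singularity, at the price of a small analytic subtlety the paper's route avoids: a strong solution of a $q$th order equation need not be $(q+1)$ times differentiable, so your parenthetical ``equivalently'' argument via $D_{x}F$ evaluated at a prolongation of $\rho$ is not literally available, and even the main computation should be phrased as a difference-quotient limit of the identity $\tilde g(x)\phi^{(q)}(x)=\tilde f(x)$ (using $\tilde g(\bar x)=\tilde f(\bar x)=0$ and continuity of $\phi^{(q)}$) rather than as a termwise application of the chain rule. With that rephrasing the step is sound, so this is a presentational caveat rather than a gap.
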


\begin{proof}
  For a scalar quasi-linear differential equation of the form
  \eqref{eq:qlode}, a point $\tilde{\rho}$ is a proper impasse point if and
  only if $g(\tilde{\rho})=f(\tilde{\rho})=0$.  But this implies
  immediately that the fibre
  $\bigl(\hat{\pi}_{q-1}^{q}\bigr)^{-1}(\tilde{\rho})$ is the whole line
  $\bigl(\pi_{q-1}^{q}\bigr)^{-1}(\tilde{\rho})$.  If it does not contain
  an irregular singularity, then at every point in it the Vessiot space is
  one-dimensional and vertical, i.\,e.\ tangential to the fibre.  Hence the
  fibre defines a generalised solution.  Since generalised solutions may
  intersect only in irregular singularities, no other one contains a point
  of the fibre.  But this observation immediately implies the non-existence
  of a strong solution $\phi$ with
  $\tilde{\rho}\in\im{j_{q-1}\sigma_{\phi}}$: if such a $\phi$ existed,
  $\im{j_{q}\sigma_{\phi}}$ would be a generalised solution intersecting
  the fibre (at the point corresponding to the value of $q$th derivative of
  $\phi$).
\end{proof}

Note that it is nevertheless possible that the initial value problem
defined by the point $\tilde{\rho}$ possesses one or more weak geometric
solutions.  However, none of these can correspond to a
$\mathcal{C}^{q}$-function.

\section{Second-Order Initial Value Problems}
\label{sec:soivp}

For a quasi-linear second-order equation $g(x,u,u')u''=f(x,u,u')$, the
vector field $Y=gC^{(1)}+fC_{1}$ generating the projected Vessiot
distribution lives on the three-dimensional manifold $\mathcal{J}_{1}$.  In
contrast to the situation in the first-order case, we cannot produce any
three-dimensional vector field
$Z=a\partial_{x}+b\partial_{u}+c\partial_{u'}$, but only those satisfying
the ``syzygy'' $u'a-b=0$.  As already mentioned, this constraint
immediately excludes the existence of hyperbolic stationary points for
$Y$. The analysis of non-hyperbolic stationary points of three-dimensional
dynamical systems may become highly non-trivial and there does not yet
exist a complete theory as in planar case.

We will consider here the special case that the function $g$ depends only
on one variable, as it leads to considerable simplifications.  As many
equations arising in concrete applications are of this particular form, it
is also of practical relevance.  We will analyse in detail the case that
$g=g(x)$ and that the initial condition is of the form $u(y)=c_{0}$ and
$u'(y)=c_{1}$ with $y$ a \emph{simple} zero of $g$.  Liang
\cite{jfl:singivp} studied this situation for $g(x)=x$ (and $y=0$) with
classical analytical techniques.  We will show how the results in
\cite{jfl:singivp} can be reproduced with our geometric techniques.  It
will turn out that our techniques are not only more straightforward and
``automatic'', as they do not require steps like guessing of a good
integrating factor or finding the right estimates, but they also provide a
much clearer explanation of the findings.  The two other univariate cases
$g=g(u)$ and $g=g(u_{1})$ behave -- somewhat surprisingly -- rather similar
and will be studied elsewhere.

To obtain regularity results, we need to study not only the original
differential equation $\mathcal{R}_{2}\subset\mathcal{J}_{2}$ defined as
the zero set of $F_{2}(x,\uv_{(2)})=g(x)u''-f(x,\uv_{(1)})$, but also all
its prolongations $\mathcal{R}_{q}\subset\mathcal{J}_{q}$ for $q>2$.  They
are given by the zero sets of the functions
\begin{equation}\label{eq:Fq}
  F_{q}(x,\uv_{(q)})=g(x)u^{(q)} + 
                  \bigl[(q-2)g'(x)-f_{u'}(x,\uv_{(1)})\bigr]u^{(q-1)} -
                  h_{q}(x,\uv_{(q-2)})
\end{equation}
where the contributions of the lower-order derivatives are collected in
functions $h_{q}$ recursively defined for $q>2$ by
\begin{equation}\label{eq:hq}
  \begin{aligned}
    h_{3}(x,\uv_{(1)}) &= C^{(1)}f(x,\uv_{(1)})\,,\\
    h_{q}(x,\uv_{(q-2)}) &= C^{(q-2)}\Bigl(h_{q-1}(x,\uv_{(q-3)})-
        \bigl[(q-3)g'(x)-f_{u'}(x,\uv_{(1)})\bigr]u^{(q-2)}\Bigr)\,.
  \end{aligned}
\end{equation}
Obviously, $\dim{\mathcal{R}_{q}}=3$ for all $q\geq1$.

We determine first the singular points on the differential equations
$\mathcal{R}_{q}$ for any $q\geq2$.  If $\rho_{q}=(\bar{x},\bar{\uv}_{(q)})$
is a point on $\mathcal{R}_{q}$, we denote by
$\rho_{k}=\pi^{q}_{k}(\rho_{q})$ its projection to $\mathcal{J}_{k}$ for
any $0\leq k<q$.  We note that $\mathcal{R}_{2}$ is a manifold except at
points $\rho_{2}$ with $g(\bar{x})=0$, $f_{u}(\rho_{1})=f_{u'}(\rho_{1})=0$
and $f_{x}(\rho_{1})=g'(\bar{x})\bar{u}''$.  Together with the differential
equation itself, this represents five conditions for four coordinates.
Thus generically $\mathcal{R}_{2}$ is everywhere a manifold.  By a similar
argument, the same is true also for all prolongations $\mathcal{R}_{q}$.
We will therefore assume from now on that no algebraic singularities appear
at any order.

\begin{lemma}\label{lem:singL}
  For any order $q\geq2$, the point
  $\rho_{q}=(\bar{x},\bar{\uv}_{(q)})\in\mathcal{R}_{q}$ is singular, if and
  only if $g(\bar{x})=0$.  It is an irregular singular point, if and only
  if in addition
  \begin{displaymath}
    \bigl[(q-1)g'(\bar{x})-f_{u'}(\rho_{1})\bigr]\bar{u}^{(q)}=
    h_{q+1}(\rho_{q-1})\,.
  \end{displaymath}
\end{lemma}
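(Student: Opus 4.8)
The plan is to apply the general characterisation of singularities from Definition~\ref{def:sing} and equation~\eqref{eq:vessdist} directly to the defining function $F_{q}$ given by \eqref{eq:Fq}. Recall that a smooth point $\rho_{q}\in\mathcal{R}_{q}$ is a regular point if and only if $C_{q}(F_{q})(\rho_{q})\neq0$, and that a singularity is irregular precisely when in addition $C^{(q)}(F_{q})(\rho_{q})=0$. So the first step is to compute $C_{q}(F_{q})=\partial F_{q}/\partial u^{(q)}$. From \eqref{eq:Fq} this is immediate: the only term containing $u^{(q)}$ is $g(x)u^{(q)}$, so $C_{q}(F_{q})=g(x)$. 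Hence $\rho_{q}$ is singular if and only if $g(\bar x)=0$, which is the first assertion. (One should remark in passing, as the paper already noted, that this is consistent with Lemma's claim being independent of the value of $\bar u^{(q)}$ and of all lower derivatives.)

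For the irregularity condition, the second step is to compute $C^{(q)}(F_{q})(\rho_{q})$ under the assumption $g(\bar x)=0$. Here I would use the recursive structure \eqref{eq:hq} together with the key identity noted just before Remark~\ref{rem:vdprol}, namely that the formal derivative $D_{x}F_{q}=C^{(q)}(F_{q})+C_{q}(F_{q})u^{(q+1)}$ is quasi-linear and in fact $D_{x}F_{q}=F_{q+1}$ up to the organisation of lower-order terms — more precisely, comparing \eqref{eq:Fq} at order $q+1$ with $D_{x}F_{q}$ shows how $h_{q+1}$ is built from $h_{q}$. The cleanest route: write $D_{x}F_{q}=g(x)u^{(q+1)}+\bigl[(q-1)g'(x)-f_{u'}\bigr]u^{(q)}-h_{q+1}(x,\uv_{(q-1)})$, which is exactly $F_{q+1}$ by \eqref{eq:Fq} with $q$ replaced by $q+1$ (noting $(q+1-2)=q-1$). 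Since $D_{x}F_{q}=C^{(q)}(F_{q})+g(x)u^{(q+1)}$ and the $u^{(q+1)}$-coefficient of $F_{q+1}$ is again $g(x)$, we read off $C^{(q)}(F_{q})=\bigl[(q-1)g'(x)-f_{u'}(\rho_{1})\bigr]u^{(q)}-h_{q+1}(x,\uv_{(q-1)})$. Evaluating at $\rho_{q}$ and setting this equal to zero gives precisely $\bigl[(q-1)g'(\bar x)-f_{u'}(\rho_{1})\bigr]\bar u^{(q)}=h_{q+1}(\rho_{q-1})$, which is the second assertion.

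The main obstacle is purely bookkeeping: one must verify carefully that $D_{x}F_{q}$, after collecting terms, really coincides with the expression $F_{q+1}$ prescribed by \eqref{eq:Fq}–\eqref{eq:hq}, i.e.\ that the recursive definition of $h_{q+1}$ is consistent with $D_{x}$ of $F_{q}$. This amounts to checking $C^{(q-1)}$ (the truncated total derivative, since no $u^{(q+1)}$ appears in the lower-order block) applied to $h_{q}-\bigl[(q-3+1)g'-f_{u'}\bigr]u^{(q-1)}$ reproduces $h_{q+1}$ — but that is exactly how \eqref{eq:hq} is set up, with the index shift $q\mapsto q+1$ turning $(q-3)$ into $(q-2)$ and the coefficient $(q-2)g'$ in \eqref{eq:Fq} arising correctly. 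The one subtlety is the splitting $D_{x}=C^{(q)}$ on functions depending on $\uv_{(q)}$ versus the appearance of the genuine new variable $u^{(q+1)}$ only through the $g(x)u^{(q)}$ term; since $g$ depends on $x$ alone, $C^{(q)}$ hits $g(x)u^{(q)}$ to produce $g'(x)u^{(q)}+g(x)u^{(q+1)}$, and the $g'(x)u^{(q)}$ contribution is what shifts the coefficient from $(q-2)g'$ at level $q$ to $(q-1)g'$ at level $q+1$. Once this identification is in hand the lemma follows with no estimates whatsoever — in keeping with the paper's stated philosophy that everything reduces to linear algebra on the Vessiot equation.
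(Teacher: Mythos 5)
Your proposal is correct and follows essentially the same route as the paper: both reduce the claim to reading off the two coefficients $C_{q}(F_{q})(\rho_{q})=g(\bar x)$ and $C^{(q)}(F_{q})(\rho_{q})$ in the Vessiot equation \eqref{eq:vessdist}, the paper by writing down the ansatz $X^{(q)}=aC^{(q)}|_{\rho_q}+bC_{q}|_{\rho_q}$ and the resulting linear system, you by extracting $C^{(q)}(F_{q})$ from the identity $D_{x}F_{q}=F_{q+1}$. Your bookkeeping check that the recursion \eqref{eq:hq} is exactly what makes $D_{x}F_{q}=F_{q+1}$ hold is sound (and is the computation the paper leaves implicit via Remark~\ref{rem:vdprol}).
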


\begin{proof}
  If we make the ansatz $X^{(2)}=aC^{(2)}|_{\rho_{2}}+bC_{2}|_{\rho_{2}}$
  for a vector in the Vessiot space
  $\mathcal{V}_{\rho_{2}}[\mathcal{R}_{2}]$, then we obtain for the
  coefficients $a,b$ the following linear system:
  \begin{displaymath}
    \Bigl(\bigl[g'(\bar{x})-f_{u'}(\rho_{1})\bigr]\bar{u}''-
           h_{3}(\rho_{1})\Bigr)a + g(\bar{x})b=0\,.
  \end{displaymath}
  For an irregular singularity, both coefficients must vanish.  For a
  regular singularity the coefficient of $b$ must vanish, whereas the
  coefficient of $a$ must be non-zero.  This proves the assertion for
  $q=2$.

  For $q>2$, we make the corresponding ansatz
  $X^{(q)}=aC^{(q)}|_{\rho_{q}}+bC_{q}|_{\rho_{q}}$ and obtain the linear
  system (cf.~Remark \ref{rem:vdprol}):
  \begin{displaymath}
    \Bigl(\bigl[(q-1)g'(\bar{x})-f_{u'}(\rho_{1})\bigr]\bar{u}^{(q)}-
           h_{q+1}(\rho_{q-1})\Bigr)a + g(\bar{x})b=0\,.
  \end{displaymath}
  The assertion follows now by the same argument as above.
\end{proof}

We will assume in the sequel that $\rho_{1}$ is determined by the initial
data of our initial value problem:
$\rho_{1}=(\bar{x}=y,\bar{u}_{0}=c_{0},\bar{u}_{1}=c_{1})$ with $y$ a
simple zero of $g$ so that we are indeed at an impasse point.  Furthermore,
we will set $\delta=g'(y)$ and $\gamma=f_{u'}(\rho_{1})$.  Note that the
assumption of a simple zero implies that $\delta\neq0$.
Lemma~\ref{lem:singL} indicates that a special case arises when $\gamma$ is
an integral multiple of $\delta$.

\begin{definition}\label{def:resL}
  The singular initial value problem determined by
  $\rho_{1}\in\mathcal{J}_{1}$ has a \emph{resonance at order $k\in\NN$},
  if $k\delta=\gamma$.  In this case, we introduce at any point
  $\rho_{k}\in(\pi^{k}_{1})^{-1}(\rho_{1})$ above $\rho_{1}$ the
  \emph{resonance parameter} $A_{k}=h_{k+2}(\rho_{k})$ and call the
  resonance \emph{critical} at $\rho_{k}$ for $A_{k}\neq0$ and
  \emph{smooth} at $\rho_{k}$ for $A_{k}=0$.
\end{definition}

\begin{remark}
  It will later turn out that only a unique point
  $\rho_{k}\in(\pi^{k}_{1})^{-1}(\rho_{1})$ is of relevance for the initial
  value problem considered by us.  Depending on the value of the resonance
  parameter at this point, we will simply speak of a critical or smooth
  resonance, respectively.  It is not difficult to see that the
  calculations forming the core of the proof of \cite[Lemma
  3.1]{jfl:singivp} are equivalent to those underlying the proof of
  Lemma~\ref{lem:singL} for the special case $g(x)=x$.  Hence, if in this
  special case a resonance occurs at order $k$, then Liang's parameter $A$
  is exactly our resonance parameter $A_{k}$.
\end{remark}

\begin{corollary}\label{cor:fibreL}
  Let $\rho_{q}\in\mathcal{R}_{q}$ be an irregular singularity.  Then the
  whole fibre $\mathcal{F}_{q+1}=(\pi^{q+1}_{q})^{-1}(\rho_{q})$ is
  contained in the prolonged equation $\mathcal{R}_{q+1}$.  If the initial
  value problem is not in resonance at order $q$, then $\mathcal{F}_{q+1}$
  contains exactly one irregular singularity.  In the case of a critical
  resonance at order $q$, the fibre $\mathcal{F}_{q+1}$ consists entirely
  of regular singularities.  If the resonance is smooth, then all points in
  $\mathcal{F}_{q+1}$ are irregular singularities.
\end{corollary}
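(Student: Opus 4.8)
The plan is to deduce Corollary~\ref{cor:fibreL} from Lemma~\ref{lem:singL}, applied at the two consecutive orders $q$ and $q+1$, together with the recursive structure \eqref{eq:hq} of the functions $h_q$. First I would recall that by Proposition~\ref{prop:fibre} the fibre $\mathcal{F}_{q+1}$ above an irregular singularity $\rho_q$ is non-empty and consists entirely of singular points of $\mathcal{R}_{q+1}$; moreover, by the quasi-linearity of $F_{q+1}$ in its highest derivative $u^{(q+1)}$ and the fact that $g(\bar x)=0$ at any point of the fibre, the defining equation $F_{q+1}=0$ becomes \emph{independent} of $u^{(q+1)}$, so the whole line $(\pi^{q+1}_q)^{-1}(\rho_q)$ lies in $\mathcal{R}_{q+1}$. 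This gives the first assertion and explains why the fibre is a genuine one-parameter family parametrised by the free coordinate $\bar u^{(q+1)}$.

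Next I would apply Lemma~\ref{lem:singL} at order $q+1$: a point $\rho_{q+1}$ in this fibre is an irregular singularity if and only if
\begin{displaymath}
  \bigl[q\,g'(\bar x)-f_{u'}(\rho_1)\bigr]\bar u^{(q+1)}
  = h_{q+2}(\rho_q)\,,
\end{displaymath}
i.e. if and only if $(q\delta-\gamma)\,\bar u^{(q+1)} = h_{q+2}(\rho_q)$. The right-hand side is a fixed number, since $\rho_q$ is fixed, whereas $\bar u^{(q+1)}$ ranges freely over $\RR$. Here the trichotomy of the corollary appears purely from linear algebra: if the coefficient $q\delta-\gamma$ is non-zero — precisely the non-resonant case at order $q$ — there is a unique solution $\bar u^{(q+1)}$, so $\mathcal{F}_{q+1}$ contains exactly one irregular singularity. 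If $q\delta=\gamma$ (resonance at order $q$), the equation degenerates to $0 = h_{q+2}(\rho_q) = A_q$; so when the resonance is critical ($A_q\neq0$) the condition is never satisfied and every point of the fibre is a regular singularity, while when the resonance is smooth ($A_q=0$) the condition holds identically and the entire fibre consists of irregular singularities.

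The one point that needs a little care is the identification of the resonance parameter $A_q = h_{q+2}(\rho_q)$ appearing in Definition~\ref{def:resL} with the quantity $h_{q+2}(\rho_q)$ that shows up on the right-hand side of the irregularity condition from Lemma~\ref{lem:singL} at order $q+1$; this is immediate once one lines up indices, since Lemma~\ref{lem:singL} at order $q+1$ involves $h_{(q+1)+1}=h_{q+2}$ evaluated at $\rho_{(q+1)-1}=\rho_q$. I would also remark that, although $h_{q+2}$ is a priori a function on $\mathcal{J}_{q}$, its value $h_{q+2}(\rho_q)$ is well-defined since $\rho_q$ determines all coordinates up to order $q$. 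The only mild subtlety — hardly an obstacle — is to confirm that in the resonant case $\rho_q$ is indeed itself an irregular singularity of $\mathcal{R}_q$ (so that the hypothesis of the corollary is consistent), which follows from Lemma~\ref{lem:singL} at order $q$ since there the relevant coefficient is $(q-1)\delta-\gamma=(q-1)\delta - q\delta = -\delta \neq 0$, forcing the irregularity condition to pin down $\bar u^{(q)}$ uniquely; so resonance at order $q$ is compatible with $\rho_q$ being irregular. Assembling these observations yields the four cases of the corollary.
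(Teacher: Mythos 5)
Your proposal is correct and follows essentially the same route as the paper: cite Proposition~\ref{prop:fibre} (plus the quasi-linearity of $F_{q+1}$, which makes $F_{q+1}=0$ independent of $u^{(q+1)}$ once $g(\bar x)=0$) for the first assertion, then read off the trichotomy from the linear equation $(q\delta-\gamma)\,\bar u^{(q+1)}=A_q$ furnished by Lemma~\ref{lem:singL} at order $q+1$. Your extra consistency check that resonance at order $q$ is compatible with $\rho_q$ being irregular is a harmless (and correct) addition the paper leaves implicit.
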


\begin{proof}
  The first assertion was already shown in Proposition \ref{prop:fibre}.
  According to Lemma \ref{lem:singL}, a point
  $\rho_{q+1}\in\mathcal{F}_{q+1}$ is an irregular singularity, if and only
  if its highest component $\bar{u}^{(q+1)}$ satisfies the equation
  $(q\delta-\gamma)\bar{u}^{(q+1)}=A_{q}$.  If the initial value problem
  has not a resonance at order $q$, then this condition determines
  $\bar{u}^{(q+1)}$ uniquely.  In the case of a smooth (critical)
  resonance, this condition is satisfied by any (no) value
  $\bar{u}^{(q+1)}$.
\end{proof}

From the proof of Lemma \ref{lem:singL}, it is straightforward to obtain a
generator $X^{(q)}$ for the Vessiot distribution
$\mathcal{V}[\mathcal{R}_{q}]$ outside of the irregular singularities of
$\mathcal{R}_{q}$.  Since $\mathcal{R}_{q}$ is quasi-linear, we are more
interested in the projected Vessiot distribution
$(\pi^{q}_{q-1})_{*}\mathcal{V}[\mathcal{R}_{q}]$.  For $q=2$, it is
generated by the vector field
\begin{displaymath}
  Y^{(1)}=g(x)\partial_{x}+g(x)u'\partial_{u}+f(x,\uv_{(1)})\partial_{u'}
\end{displaymath}
and for an arbitrary order $q>2$ by the field
\begin{equation}\label{eq:Y1}
  \begin{aligned}
    Y^{(q-1)} &{}= g(x)C^{(q-1)}\\
                  &{}+ \Bigl(h_{q}(x,\uv_{(q-2)})- \bigl[(q-2)g'(x)-
                        f_{u'}(x,\uv_{(1)})\bigr]u^{(q-1)}\Bigr)C_{q-1}\,.
  \end{aligned}
\end{equation}
Obviously, each of these vector fields can be extended to the whole
corresponding equation $\mathcal{R}_{q-1}$.  In the sequel, we will indeed
consider them as vector fields on these three-dimensional manifolds and
study the corresponding autonomous dynamical systems.  As we do not have an
explicit parametrisation of $\mathcal{R}_{q-1}$, we have expressed
$Y^{(q-1)}$ in the full set of coordinates of $\mathcal{J}_{q-1}$ which
means that we have extended $Y^{(q-1)}$ to the whole jet bundle
$\mathcal{J}_{q-1}$.  However, this extension is not uniquely defined.
Using the equations defining $\mathcal{R}_{q-1}$, we may instead consider
for example the vector field
\begin{equation}\label{eq:Y2}
  \begin{aligned}
    \hat{Y}^{(q-1)} ={}& g(x)\partial_{x} + g(x)u'\partial_{u} +
                      f(x,\uv_{(1)})\partial_{u'} + {}\\
      & \sum_{k=3}^{q}\Bigl(h_{k}(t,\uv_{(k-2)})-
            \bigl[(k-2)g'(x)-f_{u'}(x,\uv_{(1)})\bigr]u^{(k-1)}\Bigr)
            \partial_{u^{(k-1)}}
  \end{aligned}
\end{equation}
which coincides with $Y^{(q-1)}$ on $\mathcal{R}_{q-1}$ but not on the
rest of $\mathcal{J}_{q-1}$.

Recall from above that we assume that the projection
$\rho_{1}=(y,c_{0},c_{1})$ defines our singular initial data, i.\,e.\
$g(y)=0$.  Thus all points
$\rho_{q}=(\bar{x},\bar{\uv}_{q})\in\mathcal{R}_{q}\cap(\pi^{q}_{1})^{-1}(\rho_{1})$
with $\bar{x}=y$, $\bar{u}=c_{0}$ and $\bar{u}'=c_{1}$ are singular, too.
If we choose one of them, then a comparison of \eqref{eq:Fq} and
\eqref{eq:Y1} shows immediately that the projection
$\rho_{q-1}=\pi^{q}_{q-1}(\rho_{q})\in\mathcal{R}_{q-1}$ is a proper
impasse point of $\mathcal{R}_{q}$ and hence a stationary point of both
$Y^{(q-1)}$ and $\hat{Y}^{(q-1)}$.  For an analysis of the local phase
portrait, we need the Jacobian at $\rho_{q-1}$.  A straightforward
computation yields for the field $Y^{(q-1)}$
\begin{equation}\label{eq:JacY1}
  J^{(q-1)}=
  \begin{pmatrix}
    \delta & 0 & \cdots & 0\\
    \delta \bar{u}' & 0 & \cdots & 0\\
    \vdots & \vdots && \vdots\\
    \delta \bar{u}^{(q-1)} & 0 & \cdots & 0\\
    a_{0} & \cdots & a_{q-1} & \gamma-(q-2)\delta
  \end{pmatrix}
\end{equation}
where the parameter $a_{0}$, \dots, $a_{q}$ are placeholders for
complicated expressions in $y$, $\bar{\uv}_{q-1}$.  Obviously, its
eigenvalues are $\delta$, $\gamma-(q-2)\delta$ and $(q-1)$ times $0$.
Recall that $Y^{(q-1)}$ should be considered as a vector field on the
three-dimensional manifold $\mathcal{R}_{q-1}$ and therefore the question
arises which three of these eigenvalues are the relevant ones?  Brute force
approaches would consist of either computing the corresponding
(generalised) eigenvectors and checking which ones are tangent to
$\mathcal{R}_{q-1}$ or of constructing an explicit parametrisation of
$\mathcal{R}_{q-1}$.

However, it turns out that a simpler possibility exists: we do the same
computations for the field $\hat{Y}^{(q-1)}$ where we get for the Jacobian
at $\rho_{q-1}$
\begin{equation}\label{eq:JacY2}
  \hat{J}^{(q-1)}=
  \begin{pmatrix}
    \delta & 0 &&&&& 0\\
    \delta c_{1} & 0 &&&&&\\
    f_{x}(\rho_{1}) & f_{u}(\rho_{1}) & \gamma &&&&\\
    &&& \gamma-\delta &&&\\
    &&&& \gamma-2\delta &&\\
    &&&&& \ddots &\\
    \star &&&&&& \gamma-(q-2)\delta
  \end{pmatrix}.
\end{equation}
Again, it is straightforward to determine the eigenvalues which are all
distinct if we do not have a resonance at an order less than $q-1$.  

We will see later that there is no need to consider $Y^{(q-1)}$, if there
is a resonance at an order less than $q-1$. Hence we exclude these cases.
Comparing now the results for the two vector fields, we conclude that the
relevant eigenvalues are $\delta$, $0$ and $\gamma-(q-2)\delta$.  If there
is a resonance at order $q-1$, then the first and the last one are equal;
otherwise all eigenvalues are distinct.

Let us first consider the case without resonance.  The eigenspace for the
eigenvalue $\gamma-(q-2)\delta$ is obviously spanned by
$(0,\dots,0,1)^{T}$.  The eigenspace for the eigenvalue $0$ is also easy to
interpret using the vector field $\hat{Y}^{(q-1)}$.  The set of all proper
impasse points of $\mathcal{R}_{q}$ is a curve on $\mathcal{R}_{q-1}$
described by the singularity condition $g(x)=0$ and the $q-1$ equations of
$\mathcal{R}_{q}$ (which can be interpreted as equations on
$\mathcal{J}_{q-1}$ when $g(x)=0$, as then nowhere $u^{(q)}$ appears).  It
follows from Remark~\ref{rem:vdprol} that the kernel of $\hat{J}^{(q-1)}$
describes the tangent space of this curve at the point $\rho_{q-1}$.
Finally, explicitly writing out the expressions for the placeholders
$a_{i}$ and comparing with the recursive definition of $h_{q+1}$, we find
that an eigenvector for the eigenvalue $\delta$ is
\begin{displaymath}
  \Bigl(1,\bar{u}',\dots,\bar{u}^{(q-1)},
  \frac{-h_{q+1}(y,\bar{u},\dots,\bar{u}^{(q-1)})}
  {\gamma-(q-1)\delta}\Bigr)^{T}\,.
\end{displaymath}
Note that it is the only eigenvector transversal to the projection
$\pi^{q-1}$.

In the case of a resonance at order $q-1$, it is easy to see that one
eigenvector for the eigenvalue $\delta$ is given by $(0,\dots,0,1)^{T}$.
Computing the kernel of the matrix
$(J^{(q-1)}-\delta\mathbbm{1}_{q+1})^{2}$, one obtains as second, linearly
independent (generalised) eigenvector
$(1,\bar{u}',\dots,\bar{u}^{(q-1)},0)^{T}$.  More precisely, we must
distinguish two cases. If $a_{0}+\sum_{i=1}^{q-1}a_{i}\bar{u}^{(i)}=0$,
then both vectors are proper eigenvectors.  Otherwise, the second vector is
only a generalised eigenvector and for obtaining the basis leading to the
Jordan normal form one must divide it by
$a_{0}+\sum_{i=1}^{q-1}a_{i}\bar{u}^{(i)}$.  If one inserts the explicit
expressions for the placeholders $a_{i}$, then it is not difficult to see
from our formula for the prolonged equations that the resonance parameter
$A_{q-1}$ is given by the sum
$a_{0}+\sum_{i=1}^{q-1}a_{i}\bar{u}^{(i)}=
h_{q+1}(y,\bar{u},\dots,\bar{u}^{(q-1)})$.  Hence the above case
distinction corresponds to the question whether or not the resonance is
smooth.

Based on these observations, we can now give a complete overview over the
existence, (non)uniqueness and regularity of solutions for the studied
singular initial value problem.  It recovers in our slightly more general
situation all the results of Liang \cite{jfl:singivp} except that we
describe the asymptotic behaviour of the solutions as they approach the
singularity in a different way.  We begin with the case that no resonance
appears.

\begin{theorem}\label{thm:Lnores}
  Consider for the differential equation $g(x)u''=f(x,u,u')$ the initial
  value problem determined by the point $\rho_{1}=(y,c_{0},c_{1})$ with $y$
  a simple zero of $g$ and $f(y,c_{0},c_{1})=0$.  We set $\delta=g'(y)$,
  $\gamma=f_{u'}(\rho_{1})$ and assume that at no order a resonance
  appears.
  \begin{enumerate}[label=(\roman*)]
  \item If $\delta\gamma<0$, then the initial value problem possesses a
    unique two-sided smooth solution and no additional one-sided solutions.
  \item If $\delta\gamma>0$, then there exists a one-parameter family of
    two-sided solutions.  One of these solutions is smooth; the other ones
    are in $\mathcal{C}^{k}\setminus\mathcal{C}^{k+1}$ with their
    regularity given by $k=\lceil\gamma/\delta\rceil$.  All of these
    solutions possess the same Taylor polynomial
    $\sum_{i=0}^{k}\tfrac{c_{i}}{i!}(x-y)^{i}$ of degree $k$ around $y$ and
    each of them is uniquely characterised by the limit
    \begin{displaymath}
      \lim_{x\rightarrow y}\frac{u^{(k)}(x)-c_{k}}
      {|x-y|^{(\gamma-(k-1)\delta)/\delta}}\,.
    \end{displaymath}
  \end{enumerate}
\end{theorem}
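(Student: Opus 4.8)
The plan is to apply a centre-manifold analysis to the vector field $\hat Y^{(q-1)}$ at the relevant stationary point $\rho_{q-1}$, separately for each prolongation order $q\geq2$, and then to translate the resulting phase portrait into statements about weak generalised solutions via the machinery of Sections~\ref{sec:weaksol}--\ref{sec:propimp}. Since the problem is quasi-linear with $g=g(x)$ having a simple zero at $y$, the preceding discussion already hands us the three relevant eigenvalues at $\rho_{q-1}$, namely $\delta$, $0$ and $\gamma-(q-2)\delta$, together with their eigenvectors: the $\delta$-eigenvector is the unique one transversal to $\pi^{q-1}$ (hence the only candidate giving a genuine weak geometric solution), the $0$-eigenvector spans the tangent to the curve of proper impasse points, and the $(\gamma-(q-2)\delta)$-eigenvector is vertical along $\partial_{u^{(q-1)}}$. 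Because no resonance occurs, all three are distinct at every order.

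First I would treat the level $q=2$, where $\hat Y^{(1)}=Y^{(1)}$ lives on the three-dimensional $\mathcal{R}_1$ and has eigenvalues $\delta$, $0$, $\gamma$. The sign of $\delta\gamma$ decides the geometry: if $\delta\gamma<0$ the non-zero eigenvalues have opposite sign, so the stationary point is a saddle in the two non-central directions; the one-dimensional centre manifold is exactly the curve of proper impasse points (consisting entirely of stationary points, hence unique by \cite[Cor.~3.3]{js:cm} and \emph{not} a proper weak generalised solution), while the unique one-dimensional invariant manifold transversal to $\pi^{1}$ is the $\delta$-eigendirection's stable or unstable manifold — this is the only proper weak generalised solution through $\rho_1$, and it is two-sided since its tangent is transversal. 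If instead $\delta\gamma>0$, both non-zero eigenvalues have the same sign, so there is a two-dimensional (un)stable manifold and a one-parameter family of trajectories entering $\rho_1$, each tangent either to the $\delta$- or to the $0$-direction; every such trajectory combined with $\rho_1$ gives a weak generalised solution, and I must sort out which are proper and two-sided. The branch tangent to the strong $\delta$-eigenvector is the smooth one; the remaining one-parameter family is tangent to the $0$-direction but still transversal after projection, giving additional two-sided solutions of reduced regularity.

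The regularity count is then obtained by climbing the prolongation tower. By Lemma~\ref{lem:singL} and Corollary~\ref{cor:fibreL}, in the non-resonant case each fibre $\mathcal{F}_{q+1}$ over an irregular singularity contains exactly one irregular singularity, so there is a well-defined sequence $\rho_2,\rho_3,\dots$ of irregular singularities lying over $\rho_1$; a $\mathcal{C}^{r}$ weak geometric solution exists through $\rho_1$ iff the corresponding generalised solution lifts all the way up to order $r$ (the discussion after Proposition~\ref{prop:fibre}). At order $q-1$ the transversal invariant manifold persists precisely as long as the eigenvalue $\gamma-(q-2)\delta$ keeps the same sign as $\delta$ — i.e. as long as $(q-2)\delta<\gamma$ when $\delta>0$, and symmetrically for $\delta<0$ — which first fails at $q-2=\lceil\gamma/\delta\rceil$, giving $k=\lceil\gamma/\delta\rceil$ as the regularity of the non-smooth branch; the smooth solution corresponds to the branch that lifts to all orders. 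The common Taylor polynomial $\sum_{i=0}^{k}\tfrac{c_i}{i!}(x-y)^i$ comes from reading off, inductively, the unique values $\bar u^{(i)}=c_i$ forced on the sequence of irregular singularities by the equation $\bigl[(i-1)\delta-\gamma\bigr]\bar u^{(i)}=h_{i+1}(\rho_{i-1})$ of Lemma~\ref{lem:singL}, which has a unique solution at each non-resonant order. The limit characterising each solution is obtained by writing the $\delta$-direction trajectory in the normal form dictated by the eigenvalue ratio $(\gamma-(k-1)\delta)/\delta$ near the saddle at order $k$: the leading term of $u^{(k)}(x)-c_k$ behaves like $|x-y|^{(\gamma-(k-1)\delta)/\delta}$ times a nonzero constant, and that constant is precisely the free parameter along the one-parameter family.

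The main obstacle will be the careful bookkeeping at the top of the tower: verifying that the centre manifold at each order $q-1$ does indeed project onto a genuine graph (that the tangent is transversal to $\pi^{q-1}$ and not merely to $\pi^{q-1}_0$), and that the ``drop'' in regularity happens at exactly $k=\lceil\gamma/\delta\rceil$ and not off by one — this requires tracking whether $\gamma/\delta$ is itself an integer (excluded here, since that is a resonance) and handling the sign of $\delta$ uniformly. A secondary technical point is justifying the precise asymptotic $|x-y|^{(\gamma-(k-1)\delta)/\delta}$: this needs either a normal-form argument for the planar saddle obtained by restricting $\hat Y^{(k)}$ to the invariant surface spanned by the $\delta$- and $0$-eigendirections, or a direct integrating-factor computation on that surface; the geometric picture tells us the exponent must be the eigenvalue ratio, but pinning down that the limit is finite and nonzero (and that it is a bijective parametrisation of the solution family) is where the real work lies.
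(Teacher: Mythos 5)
Your overall architecture --- climbing the prolongation tower through the unique sequence of irregular singularities $\rho_{q}$, reading off the eigenvalues $\delta$, $0$, $\gamma-(q-1)\delta$ at each order, and invoking (un)stable/centre manifold theory --- is exactly the paper's, and your treatment of the saddle case $\delta\gamma<0$ and of the Taylor polynomial is sound. The gap is in the node case $\delta\gamma>0$. First, a concrete error: the one-parameter family of trajectories on the two-dimensional unstable manifold is \emph{not} ``tangent either to the $\delta$- or to the $0$-direction''. That manifold is spanned by the transversal $\delta$-eigenvector and the \emph{vertical} $(\gamma-(q-1)\delta)$-eigenvector; the $0$-eigendirection belongs to the centre manifold, which is the curve of stationary impasse points and carries no trajectories. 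At a two-tangent node almost all trajectories enter tangent to the eigenvector of the \emph{smaller} positive eigenvalue, and which one that is switches exactly at order $q=k$: for $q<k$ one has $\delta<\gamma-(q-1)\delta$, so the generic tangent is the transversal $\delta$-direction (whence the family consists of two-sided prolonged graphs of $\mathcal{C}^{q}$ functions), while at $q=k$ one has $0<\gamma-(k-1)\delta<\delta$, so the generic tangent becomes vertical and the $(k+1)$th derivative of every generic member blows up at $y$; only the single trajectory pair on the strong unstable direction stays transversal and survives to all higher orders. Your proposal instead locates the regularity drop at the \emph{sign} change of the third eigenvalue (order $k+1$); that does show that at most one solution is $\mathcal{C}^{k+1}$, but it cannot by itself establish that the generic members are genuinely two-sided $\mathcal{C}^{k}$ solutions, and your claim that the family is ``still transversal after projection'' is precisely what fails at order $k$ and what produces the exponent $(\gamma-(k-1)\delta)/\delta<1$ in the limit. (Note also that for $k\geq2$ the smooth solution is \emph{not} distinguished at order $1$ as ``the branch tangent to the strong $\delta$-eigenvector'': at order $1$ the $\delta$-direction is the weak one and essentially all members of the family are tangent to it.)

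Second, all these tangency statements are read off from the Jacobian, which requires more than the topological Hartman--Grobman theorem: the paper uses the $\mathcal{C}^{1}$ differentiability of the linearising homeomorphism at the stationary point \cite{ghr:dhgl} to transfer tangents from the linearisation to the nonlinear system. The limit characterisation then comes from the explicit linearised trajectories \eqref{eq:lintraj1} on the unstable manifold at order $k$, elimination of $t$ via $x(t)=y+\alpha e^{\delta t}$, and the pairing of the trajectories for $(\alpha,\beta)$ and $(-\alpha,-\beta)$ into a single $\mathcal{C}^{1}$ invariant curve --- this pairing is also what delivers the two-sidedness and the bijectivity of the parametrisation, which you flag as open. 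The eigenvalue-ordering switch at order $k$ and the $\mathcal{C}^{1}$ linearisation are the load-bearing steps your proposal is missing.
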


\begin{proof}
  Consider the fibre $\mathcal{F}_{2}=(\pi^{2}_{1})^{-1}(\rho_{1})$.  It is
  trivial to see that $\mathcal{F}_{2}\subset\mathcal{R}_{2}$.  Because of
  the absence of resonances, it follows from Lemma~\ref{lem:singL} and
  Corollary~\ref{cor:fibreL} that $\mathcal{F}_{2}$ contains exactly one
  irregular singularity $\rho_{2}=(y,c_{0},c_{1},c_{2})$; all other points
  in $\mathcal{F}_{2}$ with $\bar{u}^{(2)}\neq c_{2}$ are regular
  singularities.  As discussed in the proof of Theorem~\ref{thm:regsing},
  the unique generalised solution through any of these latter points is the
  fibre $\mathcal{F}_{2}$ itself.  As this is obviously not a proper
  generalised solution, we conclude that the second prolongation of any
  $\mathcal{C}^{2}$ solution of our initial value problem must pass through
  $\rho_{2}$.

  We can now proceed by induction.  If $\rho_{q}$ is the unique irregular
  singularity in the fibre $\mathcal{F}_{q}$, then Proposition
  \ref{prop:fibre} entails that the entire fibre $\mathcal{F}_{q+1}$ over
  $\rho_{q}$ is contained in $\mathcal{R}_{q+1}$.  By Lemma~\ref{lem:singL}
  and Corollary~\ref{cor:fibreL}, it contains a unique irregular
  singularity $\rho_{q+1}$.  By the same argument as above, the
  prolongation of order $q+1$ of any $\mathcal{C}^{q+1}$ solution of our
  initial value problem must pass through $\rho_{q+1}$. We denote the value
  $\bar{u}^{(q)}$ of the $u^{(q)}$-coordinate of $\rho_{q}$ by $c_{q}$,
  i.\,e.\ from now on we always assume that
  $\rho_{q}=(y,c_{0},\dots,c_{q})$.

  We consider now first the case that $\delta\gamma<0$.  Without loss of
  generality we assume that $\delta>0$, as otherwise we simply multiply our
  equation by $-1$.  The Jacobian $J^{(1)}$ of the vector field $Y^{(1)}$
  at the initial point $\rho_{1}$ has the three eigenvalues $\delta$, $0$,
  $\gamma$ all of which have a different sign under our assumption
  $\delta\gamma<0$.  It follows from the classical Centre Manifold Theorem
  (see e.\,g.\ \cite[Thm.~3.2.1]{gh:osc}) that there are three unique
  one-dimensional invariant manifolds tangent to the corresponding
  eigenvectors.  The uniqueness of the centre manifold follows here again
  from the fact that we have a whole curve of stationary points.

  Based on the above discussion of the eigenvectors, it is easy to identify
  two of the three invariant manifolds.  The stable manifold belonging to
  the negative eigenvalue $\gamma$ is simply the fibre
  $(\pi^{1}_{0})^{-1}(y,c_{0})$.  Indeed, the fibre is an invariant
  manifold, as $Y^{(1)}$ is vertical everywhere on this fibre and thus
  tangential to the fibre.  Since at $\rho_{1}$ it is tangential to the
  eigenspace $\gamma$, the claim follows from the uniqueness of the stable
  manifold.  Thus the stable manifold is not a proper weak generalised
  solution.  The centre manifold is the curve of all proper impasse points
  which is completely contained in the fibre $(\pi^{1})^{-1}(y)$ and hence
  also not a proper weak generalised solution.  Only the unstable manifold
  corresponding to the positive eigenvalue $\delta$ defines locally a
  proper weak generalised solution projecting on a weak geometric solution
  which is the graph of a classical solution.

  The smoothness of this solution can be proven by considering the
  prolongations.  At any prolongation order $q\geq2$, we find the same
  picture.  The Jacobian $J^{(q)}$ of the vector field $Y^{(q)}$ at the
  point $\rho_{q}$ has the three eigenvalues $\delta>0$, $0$,
  $\gamma-(q-1)\delta<0$.  Only the unstable manifold defines a proper
  generalised solution projecting on a geometric solution which is the
  graph of a function.  Because of the uniqueness of the unstable manifold,
  we obtain always the same geometric solution which is thus smooth.  It
  follows immediately from our construction that this solution is two-sided
  and that no further one-sided solutions can exist.

  In the case $\delta\gamma>0$, we find different phase portraits.
  $J^{(1)}$ has now two distinct positive and one zero eigenvalue.  Again
  the argument given above implies that the centre manifold does not define
  a proper weak generalised solution whereas the trajectories on the
  two-dimensional unstable manifold yield a one-parameter family of
  one-sided proper weak generalised solutions.  This picture remains
  qualitatively unchanged at the orders $q=2,\dots,k$ so that the
  corresponding one-sided geometric solutions are the graphs of
  $\mathcal{C}^{k}$ functions.  The Taylor polynomial of degree $k$ around
  $y$ of any of these functions is given by the $k$-jet
  $\rho_{k}=(y,c_{0},\dots,c_{k})\in\mathcal{J}_{k}$.

  More precisely, for $1\leq q\leq k$, the dynamics on the unstable
  manifold corresponds to that around an unstable two-tangent node.  If
  $q<k$, then the eigenvalue $\gamma-(q-1)\delta$ is the larger one by the
  definition of $k$. Hence almost all trajectories of $Y^{(q)}$ reach the
  point $\rho_{q}$ tangential to the transversal eigenvector belonging to
  the smaller eigenvalue $\delta$.  Thus we can always combine two
  trajectories coming from the left and the right, respectively, to a
  two-sided proper generalised solution which is the prolonged graph of a
  function of class $\mathcal{C}^{q}$.  However, at the order $q=k$ there
  is a change: now $\delta$ is the larger eigenvalue and hence almost all
  generalised solutions are tangential to the vertical eigenvector
  belonging to the eigenvalue $\gamma-(k-1)\delta$.  The verticality
  implies that the $(k+1)$th derivative of the corresponding classical
  solution becomes infinite at $x=y$.  Thus all these generalised solutions
  come from functions which are of class $\mathcal{C}^{k}$, but not of
  class $\mathcal{C}^{k+1}$ at $y$.  There is only one generalised solution
  which is tangential to the transversal eigenvector belonging to $\delta$
  and which thus corresponds to an at least $k+1$ times differentiable
  function.

  Obviously, in the above argument we are implicitly applying the
  Hartman-Grobman Theorem asserting an equivalence between the phase
  portraits of a dynamical system in the neighbourhood of a hyperbolic
  stationary point and of its linearisation around this point.  The
  standard formulation of this theorem, as one can find it in most
  textbooks like \cite{lp:deds}, asserts only a topological equivalence
  entailing that no statements about tangents are possible.  However, in
  the case of a smooth dynamical system the linearising homeomorphism is
  differentiable at the stationary point and thus preserves tangents
  \cite{ghr:dhgl}.  Hence the above statements about the tangents of the
  trajectory at the stationary point can indeed be gleaned from the
  Jacobian.

  At all orders $q>k$, the third eigenvalue is negative so that now we
  obtain qualitatively the same phase portraits as in the case
  $\delta\gamma<0$ with a one-dimensional unstable manifold.  Hence we find
  that the one solution which is at least $k+1$ times differentiable is
  actually smooth (and again trivially two-sided).  These phase portraits
  imply again that all the other members of the one-parameter family of
  $\mathcal{C}^{k}$ solutions are not contained in $\mathcal{C}^{k+1}$.

  For the remaining claim, we look at the phase portrait of $Y^{(k)}$
  around $\rho_{k}$.  As already mentioned above, $\rho_{k}$ is a
  two-tangent node for the reduced dynamics on the two-dimensional unstable
  manifold with the two eigenvalues $0<\gamma-(k-1)\delta<\delta$.  The
  trajectories of the linearised system lie in the plane spanned by the
  above computed eigenvectors and (except the two irrelevant vertical ones)
  can be written in parametrised form as
  \begin{equation}\label{eq:lintraj1}
    \begin{gathered}
    x(t)=y + \alpha e^{\delta t}\,,\quad
    u(t)=c_{0} + c_{1} \alpha e^{\delta t}\,, \dots\quad
    u^{(k-1)}(t)=c_{k-1} + c_{k} \alpha e^{\delta t}\,,\\
    u^{(k)}(t)=c_{k} -
          \frac{h_{k+2}(\rho_{k})}{\gamma-k\delta}\alpha e^{\delta t}
         + \beta e^{(\gamma-(k-1)\delta)t}\,,
    \end{gathered}
  \end{equation}
  where the constants $\alpha\neq0$ and $\beta$ may be considered as the
  coordinates of an initial point on the plane.  It suffices to consider
  $\alpha=\pm1$ to obtain all trajectories uniquely and the sign decides
  whether the trajectory is reaching $\rho_{k}$ from the left or from the
  right.  In the limit $x\rightarrow y$, the trajectories of the reduced
  system on the unstable manifold approach the ones of its linearisation.
  In \eqref{eq:lintraj1}, this limit corresponds to $t\rightarrow-\infty$.
  If we use the first equation in \eqref{eq:lintraj1} to eliminate $t$,
  then the last equation in \eqref{eq:lintraj1} shows that the
  corresponding value of the parameter $\beta$ describing the trajectory
  uniquely is obtained as the limit of
  $\bigl(u^{(k)}(x)-c_{k}\bigr)/|x-y|^{(\gamma-(k-1)\delta)/\delta}$ for
  $x\rightarrow y$, since the term proportional to $\alpha e^{\delta t}$
  vanishes in the limit.  We see furthermore from the linearised dynamics
  that we may combine the trajectories for the initial points
  $(\alpha,\beta)$ and $(-\alpha,-\beta)$, respectively, to a
  $\mathcal{C}^{1}$ curve through the stationary point $\rho_{k}$.  Hence
  the same is possible for the nonlinear reduced dynamics and by
  construction the obtained generalised two-sided solution corresponds to a
  strong $\mathcal{C}^{k}$ solution.
\end{proof}

\begin{remark}\label{rem:hoelder}
  If the functions $f$ and $g$ are even \emph{analytic}, then everywhere in
  the above theorem we can replace smooth by analytic.  Indeed, it is
  well-known that then the unstable manifold of a stationary point is also
  analytic.  The construction in our proof shows that the unstable
  manifolds are always the graph of some prolongation of our smooth
  solution.  Hence, this solution must be analytic.

  In the case of solutions with a finite regularity, one can further
  strengthen the statement of Theorem \ref{thm:Lnores}: the $k$th
  derivatives of the $\mathcal{C}^{k}$ solutions are \emph{H\"older
    continuous} with H\"older exponent
  $\lambda=(\gamma-(k-1)\delta)/\delta<1$.  This follows easily from our
  proof of Theorem \ref{thm:Lnores}.  Indeed, \eqref{eq:lintraj1} entails
  that the solution of the linearised dynamics is H\"older continuous with
  exponent $\lambda$ at $\rho_{k}$.  As already discussed above, in a
  sufficiently small open neighbourhood of $\rho_{k}$, the homeomorphism
  mapping it to the solution of the nonlinear system is $\mathcal{C}^{1}$
  and thus in particular Lipschitz continuous at $\rho_{k}$ by
  \cite{ghr:dhgl}.  It then follows from standard results about the
  composition of H\"older continuous functions that the solution of the
  nonlinear system is also H\"older continuous with exponent $\lambda$ (see
  e.\,g.\ \cite{rf:hoelder}).
\end{remark}

Before we study the effect of a resonance, we analyse the case $\gamma=0$
(ignored by Liang \cite{jfl:singivp}). It could be considered as a
resonance at order $k=0$.  However, it must be treated in a rather
different manner.  At a resonance, the relevant Jacobian has a double
eigenvalue $\delta$ and its eigenspace contains transversal vectors.  By
contrast, in the case $\gamma=0$ it possesses a double eigenvalue $0$ and
its complete eigenspace lies vertical.

\begin{theorem}\label{thm:gamma0}
  In the situation of Theorem~\ref{thm:Lnores}, assume that $\gamma=0$.
  Then there exists a unique smooth two-sided solution (and possibly
  further one-sided solutions).
\end{theorem}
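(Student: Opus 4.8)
The plan is to follow the strategy of the proof of Theorem~\ref{thm:Lnores}, with one essential modification forced by the degeneracy at $\gamma=0$. At $\gamma=0$ the projected field $Y^{(1)}$ on $\mathcal{R}_1$ has at $\rho_1$ the eigenvalues $\delta$, $0$, $0$, and (as remarked before the statement) the entire generalised eigenspace of the zero eigenvalue is vertical; the base level therefore carries a two‑dimensional centre manifold and cannot be treated by the clean ``one positive, one zero, one negative'' argument used in Theorem~\ref{thm:Lnores}. The key observation is that this defect disappears after one prolongation: for every order $m\geq 2$ the Jacobian of $Y^{(m)}$ at $\rho_m$ has the \emph{distinct} eigenvalues $\delta$, $0$ and $\gamma-(m-1)\delta=-(m-1)\delta$, one of each sign once we normalise $\delta>0$ (multiplying the equation by $-1$ changes neither the solution set nor the hypothesis $\gamma=0$). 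So the approach is to run the phase‑portrait analysis at orders $m\geq 2$ and then push the conclusion down to the base level.

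First I would set up the tower of irregular singularities exactly as in Theorem~\ref{thm:Lnores}. Since $k\delta=\gamma=0$ has no solution with $k\in\NN$, the initial value problem is nowhere in resonance, so Lemma~\ref{lem:singL} and Corollary~\ref{cor:fibreL} produce a unique chain $\rho_1,\rho_2,\rho_3,\dots$ with $\pi^{q}_{q-1}(\rho_q)=\rho_{q-1}$, where each $\rho_q=(y,c_0,\dots,c_q)$ is the only irregular singularity in its fibre $\mathcal{F}_q=(\pi^{q}_{q-1})^{-1}(\rho_{q-1})$ and every other point of $\mathcal{F}_q$ is a regular singularity whose sole generalised solution is the vertical fibre. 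As in that proof, this forces the $q$‑jet of every $\mathcal{C}^q$ solution of the initial value problem to run through $\rho_q$.

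Next, for each $m\geq 2$ the unstable manifold theorem gives a unique one‑dimensional unstable manifold $W^u_m$ at $\rho_m$, tangent to the transversal eigenvector $C^{(m)}|_{\rho_m}+c_{m+1}C_m|_{\rho_m}$ — its last coordinate equals $c_{m+1}$ precisely because $\rho_{m+1}$ satisfies the irregularity condition of Lemma~\ref{lem:singL}. Transversality and the fact that $Y^{(m)}$ is a contact field then show, via \cite[Prop.~2.1.6]{wms:invol}, that $W^u_m=\im j_m\sigma_{\phi_m}$ is the prolonged graph of a $\mathcal{C}^m$ function $\phi_m$ with $\phi_m(y)=c_0$, $\phi_m'(y)=c_1$. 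Since $Y^{(m)}$ is $\pi^m_{m-1}$‑related to $Y^{(m-1)}$ and $T\pi^m_{m-1}$ maps the unstable eigenvector at $\rho_m$ onto the one at $\rho_{m-1}$, uniqueness of the unstable manifold gives $\pi^m_{m-1}(W^u_m)=W^u_{m-1}$; hence the $\phi_m$ are restrictions of one another and define a single geometric solution that is of class $\mathcal{C}^m$ for every $m$, i.\,e.\ smooth. Because $\rho_m$ is an interior point of $W^u_m$ and the eigenvector is transversal, this solution is two‑sided. (For analytic $f,g$ one even obtains analyticity, exactly as in Remark~\ref{rem:hoelder}.)

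For uniqueness, suppose $\psi$ is any smooth two‑sided solution; the tower forces $\psi''(y)=c_2$ and $\psi'''(y)=c_3$, so the tangent of the smooth curve $\im j_2\sigma_\psi$ at $\rho_2$ equals the unstable eigenvector $(1,c_1,c_2,c_3)$ of $Y^{(2)}$, and away from $\rho_2$ that curve is an integral curve of $Y^{(2)}$ (as $\psi$ also solves $\mathcal{R}_3$), hence it lies on $W^u_2$ and $\psi$ coincides with the solution built above. The remaining, necessarily non‑smooth and generically one‑sided, solutions referred to in the statement are the ones supported on the two‑dimensional centre manifold of $Y^{(1)}$ at $\rho_1$, which I would not classify in detail here. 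The main obstacle is exactly this double zero eigenvalue at the base level; the non‑routine move is the climb to orders $m\geq 2$, where the spectrum $\delta,0,-(m-1)\delta$ separates and a genuine one‑dimensional smooth unstable manifold exists, and the delicate part of the descent is the pair of compatibility facts — $\pi^m_{m-1}$‑relatedness of the fields $Y^{(m)}$ and projection of the unstable eigenvectors onto each other, the latter resting on the specific choice of the tower $\rho_q$ — through which uniqueness of the unstable manifold propagates down and yields both smoothness and two‑sidedness.
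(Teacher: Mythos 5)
Your proposal is correct, and the existence half is essentially the paper's argument: the paper likewise takes the one-dimensional unstable manifold of $Y^{(1)}$ at $\rho_{1}$ (eigenvalues $\delta,0,0$) and establishes smoothness ``analogously to the proof of Theorem~\ref{thm:Lnores}'', i.e.\ exactly by your observation that at every order $m\geq2$ the spectrum separates into $\delta,0,-(m-1)\delta$ and the unique unstable manifolds of the $Y^{(m)}$ project consistently onto one another. Where you genuinely diverge is the uniqueness step. The paper stays at the base order: it notes that the two-dimensional centre manifold of $Y^{(1)}$ at $\rho_{1}$ need not be unique, that the unique \emph{analytic} centre manifold is the vertical plane $(\pi^{1})^{-1}(y)$, and that any trajectory through $\rho_{1}$ on some other centre manifold necessarily has a vertical tangent there — so the corresponding function cannot be twice differentiable at $y$. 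You instead climb to order $2$, use the tower $\rho_{2},\rho_{3}$ to pin down the tangent of $\im j_{2}\sigma_{\psi}$ at $\rho_{2}$ as the unstable eigenvector, and invoke uniqueness of $W^{u}_{2}$. Both routes are sound at the paper's level of rigour (your final ``hence it lies on $W^{u}_{2}$'' tacitly uses that a trajectory reaching $\rho_{2}$ tangent to the unstable direction cannot sit on the centre-unstable manifold off $W^{u}_{2}$, which deserves a sentence), but the paper's version buys a slightly stronger conclusion: it rules out \emph{all} further two-sided strong ($\mathcal{C}^{2}$) solutions, whereas your argument, needing $\psi'''(y)=c_{3}$, only rules out further two-sided solutions of class $\mathcal{C}^{3}$ or better. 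For the theorem as stated (uniqueness of the \emph{smooth} two-sided solution) your weaker exclusion suffices.
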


\begin{proof}
  In the case $\gamma=0$, the Jacobian $J^{(1)}$ of $Y^{(1)}$ at the
  initial point $\rho_{1}$ has $0$ as a double eigenvalue with two vertical
  (generalised) eigenvectors and $\delta$ as a simple non-zero eigenvalue
  (again assumed to be positive).  Analogously to the proof of
  Theorem~\ref{thm:Lnores}, one shows that the unstable manifold is the
  graph of a prolonged smooth two-sided solution.

  The uniqueness of this solution is now a bit more subtle.  In contrast to
  the situation in the proof of Theorem~\ref{thm:Lnores}, we have now a
  two-dimensional centre manifold which is not necessarily unique.  As both
  (generalised) eigenvectors are vertical, it is also easy to see that
  there is a unique analytic centre manifold given by the plane
  $(\pi^{1})^{-1}(y)$ which, however, cannot contain any proper generalised
  solutions.  There may exist further centre manifolds (not necessarily
  smooth) and on these there could exist trajectories through $\rho_{1}$
  not contained in $(\pi^{1})^{-1}(y)$ which could correspond to the
  prolongation of a function graph.  However, even if such a trajectory
  exists, then it must have a vertical tangent in $\rho_{1}$ and hence the
  corresponding function is not twice differentiable at $y$.  Thus there
  cannot exist any further two-sided strong solutions.
\end{proof}

\begin{remark}\label{rem:gamma0}
  The theorem above leaves open the question of the existence of further
  one-sided solutions.  We will now show with the help of a simple concrete
  example that such solutions may or may not exist.  Consider the equation
  $xu''=d(u')^{m}$ with a parameter $0\neq d\in\RR$ and an exponent
  $1<m\in\NN$. For it $\delta=1$, $\gamma=0$ and $y=0$.  Its irregular
  singularities are the points $(0,c_{0},0,c_{2})$ for arbitrary values
  $c_{0},c_{2}\in\RR$.  Projection of the Vessiot distribution yields the
  dynamical system
  \begin{equation}\label{eq:gamma0}
    \dot{x}=x\,,\qquad \dot{u}=xv\,,\qquad \dot{v}=dv^{m}
  \end{equation}
  defined on $\mathcal{J}_{1}$ where we introduced $v=u'$ for notational
  simplicity.  Its stationary points are the impasse points
  $(0,c_{0},0)$.  They all possess the same unique analytic centre
  manifold, namely the plane $(\pi^{1})^{-1}(0)$.

  The first and the third equation in \eqref{eq:gamma0} form a closed
  subsystem (which is independent of $c_{0}$) with a semihyperbolic
  stationary point at the origin.  According to
  \cite[Thm.~2.19]{dla:qualplan}, we must distinguish three different
  cases:
  \begin{description}
  \item[$\boldsymbol{m}$ odd, $\boldsymbol{d<0}$] In this case, the origin
    is a saddle point of the subsystem and the only invariant manifolds
    reaching it are the centre and the unstable manifold.  Going back to
    our differential equation, we see that only two weak generalised
    solutions exist.  It follows from the form of the eigenvectors that
    only the unstable manifold of \eqref{eq:gamma0} provides us with a
    proper weak generalised solution.  Hence in this case the initial value
    problem $u(0)=c_{0}$, $u'(0)=0$ possesses only the unique two-sided
    solution from Theorem~\ref{thm:gamma0} and no further one-sided
    solutions.
  \item[$\boldsymbol{m}$ odd, $\boldsymbol{d>0}$] Now the origin is an
    unstable node of the subsystem implying the existence of many
    additional weak generalised solutions of our differential equations.
    However, all of these possess a vertical tangent at the origin and thus
    cannot be of class $\mathcal{C}^{2}$ for $x=0$.  In fact, the
    generalised solutions show a turning point behaviour at $x=0$ and thus
    each of them corresponds to two one-sided solutions which are both only
    defined either for $x\geq0$ or for $x\leq0$.
  \item[$\boldsymbol{m}$ even] This yields a combination of the two cases
    above, as the subsystem has now a saddle node at the origin.  Depending
    on the sign of $d$, we find above the unstable manifold the same phase
    portrait as for an unstable node and below as for a saddle point or
    vice versa.  In any case, we have again infinitely many additional
    one-sided solutions.
  \end{description}
  In fact, for our simple system it is straightforward to integrate the
  system \eqref{eq:gamma0} at least partially.  We find
  \begin{displaymath}
    x(t)=ae^{t}\,,\quad v(t)=\bigl(c-(m-1)dt\bigr)^{-\frac{1}{m-1}}\,,
  \end{displaymath}
  with an integration constant $c\in\RR$.  The function $u(t)$ is then
  obtained by integrating the product $x(t)v(t)$.  The result can be
  expressed in terms of the generalised exponential integrals $E_{n}(t)$.

  In the general situation of Theorem~\ref{thm:gamma0}, the stationary
  points are given by the curve $f(y,u,v)=0$ and the common unique analytic
  centre manifold of them is again a plane, namely $(\pi^{1})^{-1}(y)$.
  The form of the reduced dynamics on it in the neighbourhood of a
  stationary point $(y,c_{0},c_{1})$ is determined by the order $m$ of the
  first non-vanishing derivative
  $\tfrac{\partial^{m}f}{\partial v^{m}}(y,c_{0},c_{1})$ and the sign of
  its value $d$ and corresponds to the different cases arising in our
  simple example.  However, the value of $m$ and the sign of $d$ may now
  change at certain stationary points and there now arise further case
  distinctions.  For example, the Jacobian at the stationary point
  $(y,c_{0},c_{1})$ is diagonalisable only if the derivative
  $\tfrac{\partial f}{\partial u}(y,c_{0},c_{1})$ vanishes as in our
  example.  We refrain here from a complete analysis of all possible cases.
\end{remark}

\begin{theorem}\label{thm:Lres}
  In the situation of Theorem~\ref{thm:Lnores}, assume that a resonance
  occurs at the order $k>0$.  There exists a one-parameter family of
  two-sided solutions all possessing the same Taylor polynomial
  $\sum_{i=0}^{k}\tfrac{c_{i}}{i!}(x-y)^{i}$ of degree $k$ around $y$.  In
  the case of a smooth resonance, all of these solutions are smooth and
  each is uniquely determined by the value of its $(k+1)$st derivative in
  $y$.  In the case of a critical resonance, all solutions live in
  $\mathcal{C}^{k}\setminus\mathcal{C}^{k+1}$ and each of them is uniquely
  characterised by the value of
  \begin{displaymath}
    \lim_{x\rightarrow y}(x-y)\exp{\Bigl(-\delta\frac{u^{(k)}(x)-c_{k}}{x-y}\Bigr)}\,.
  \end{displaymath}
\end{theorem}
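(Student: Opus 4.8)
The plan is to follow the scheme of the proof of Theorem~\ref{thm:Lnores} and to isolate all the new phenomena at the single prolongation order $k$ (and, for a smooth resonance, at order $k+1$). Since $\gamma$ and $\delta$ are fixed, $k=\gamma/\delta$ is the only possible resonance order, so no resonance occurs below order $k$; assume $\delta>0$ (otherwise multiply the equation by $-1$). Then, exactly as in Theorem~\ref{thm:Lnores}, Lemma~\ref{lem:singL} together with Corollary~\ref{cor:fibreL} yields for $q=2,\dots,k$ a unique irregular singularity $\rho_{q}=(y,c_{0},\dots,c_{q})$ in the fibre $\mathcal{F}_{q}=(\pi^{q}_{q-1})^{-1}(\rho_{q-1})$, and the order-$q$ prolongation of every $\mathcal{C}^{q}$ solution must pass through $\rho_{q}$; this already pins down the common Taylor polynomial $\sum_{i=0}^{k}\tfrac{c_{i}}{i!}(x-y)^{i}$. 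Applying Corollary~\ref{cor:fibreL} at order $k$ splits the argument: for a smooth resonance the entire fibre $\mathcal{F}_{k+1}$ above $\rho_{k}$ consists of irregular singularities, while for a critical resonance it consists entirely of regular ones, so that by the usual obstruction argument no $\mathcal{C}^{k+1}$ solution can exist.

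The decisive local model is the vector field $Y^{(k)}$ at the proper impasse point $\rho_{k}\in\mathcal{R}_{k}$. By \eqref{eq:JacY1} and \eqref{eq:JacY2} its Jacobian has the three relevant eigenvalues $\delta$, $0$ and $\gamma-(k-1)\delta=\delta$: a double eigenvalue $\delta$ together with the unavoidable zero eigenvalue belonging to the curve of impasse points. That curve is the (unique, being a curve of stationary points) centre manifold, lies in $(\pi^{k})^{-1}(y)$ and hence never produces a proper weak generalised solution, so everything is decided on the two-dimensional unstable manifold of $Y^{(k)}$ at $\rho_{k}$. As recorded before the theorem, the resonance parameter $A_{k}=h_{k+2}(\rho_{k})$ governs whether the $\delta$-eigenspace is two-dimensional ($A_{k}=0$, smooth) or the block is a genuine Jordan block with proper eigenvector the vertical $(0,\dots,0,1)^{T}$ and generalised eigenvector the transversal $(1,c_{1},\dots,c_{k},0)^{T}$ ($A_{k}\neq0$, critical). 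In either case I would, as in the proof of Theorem~\ref{thm:Lnores}, read the asymptotic statements off the linearisation using the Hartman--Grobman theorem on the (hyperbolic) unstable manifold together with the differentiability of the linearising conjugacy at the stationary point \cite{ghr:dhgl}, which preserves tangents and leading-order asymptotics.

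For a smooth resonance the unstable manifold is a node with a genuinely two-dimensional $\delta$-eigenspace, so its trajectories are, up to the nonlinear conjugacy, straight rays; discarding the vertical ray (which stays in the fibre), the remaining directions form a one-parameter family, and combining the ray $(1,c_{k+1})$ from the right with $(-1,-c_{k+1})$ from the left yields a two-sided generalised solution whose associated function has $(k+1)$st derivative $c_{k+1}$ at $y$. Smoothness then follows by continuing the tower: $\rho_{k+1}=(y,c_{0},\dots,c_{k},c_{k+1})$ is an irregular singularity, from order $k+2$ on the third eigenvalue $\gamma-(q-1)\delta=(k-q+1)\delta$ is negative and one is back in the regular one-dimensional unstable-manifold situation of Theorem~\ref{thm:Lnores}, and at the one intermediate order $k+1$ the Jacobian has a double zero eigenvalue exactly as in Theorem~\ref{thm:gamma0}; here I would argue as there that the solution is still $\mathcal{C}^{k+2}$, because along the unstable manifold the relation $g(x)(u^{(k+1)})'=h_{k+2}-[kg'(x)-f_{u'}]u^{(k+1)}$ (coming from $F_{k+2}=0$) has vanishing coefficient of $u^{(k+1)}$ at $\rho_{k+1}$, namely $kg'(y)-f_{u'}(\rho_{1})=k\delta-\gamma=0$, and vanishing constant term $h_{k+2}(\rho_{k})=A_{k}=0$, which forces $(u^{(k+1)})'$ to stay bounded as $x\to y$. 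Iterating gives that every member of the family is smooth (and analytic if $f$, $g$ are).

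For a critical resonance the unstable manifold is instead a one-tangent (improper) node: every trajectory not contained in the fibre reaches $\rho_{k}$ tangent to the vertical eigenvector, which by Remark~\ref{rem:vdprol} is precisely the statement that the $(k+1)$st derivative blows up, so the corresponding functions lie in $\mathcal{C}^{k}\setminus\mathcal{C}^{k+1}$, consistent with the non-existence of $\mathcal{C}^{k+1}$ solutions found above. To extract the characterising invariant I would solve the linearised flow, which in the basis $\{(0,\dots,0,1)^{T},(1,c_{1},\dots,c_{k},0)^{T}\}$ reads $\dot\eta_{1}=\delta\eta_{1}+A_{k}\eta_{2}$, $\dot\eta_{2}=\delta\eta_{2}$ with $\eta_{2}=x-y$ and $\eta_{1}=u^{(k)}-c_{k}$; eliminating the flow parameter gives
\[
  \frac{u^{(k)}(x)-c_{k}}{x-y}=C+\frac{A_{k}}{\delta}\ln|x-y|+o(1)\qquad(x\to y),
\]
so that the constant $C$ labels the trajectories and reappears, after rewriting, as the limit in the statement; two-sidedness again comes from combining a left and a right branch at $\rho_{k}$, and the same $\mathcal{C}^{1}$-conjugacy \cite{ghr:dhgl} transfers this asymptotics to the full nonlinear reduced dynamics. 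The main obstacle throughout is exactly this order-$k$ stationary point: it can never be hyperbolic, so the centre/unstable-manifold machinery must be run by hand; in the critical case one must in addition control the non-diagonalisable Jordan linearisation and justify carefully that a left and a right trajectory glue at $\rho_{k}$ into a single generalised solution of the asserted regularity, while in the smooth case the degenerate double-zero eigenvalue at order $k+1$ needs the separate, non-hyperbolic argument borrowed from Theorem~\ref{thm:gamma0}.
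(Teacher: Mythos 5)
Your proposal is correct and follows essentially the same route as the paper: the unique tower of irregular singularities up to order $k$ fixing the common Taylor polynomial, the Jordan analysis of the double eigenvalue $\delta$ of $J^{(k)}$ at $\rho_{k}$ (star node versus one-tangent node according to whether $A_{k}$ vanishes), the passage through the double zero eigenvalue at order $k+1$ in the smooth case via the argument of Theorem~\ref{thm:gamma0}, and the explicit linearised Jordan flow yielding the logarithmic invariant in the critical case. The only point of divergence is that your asymptotics carries the coefficient $A_{k}/\delta$ in front of $\ln|x-y|$ --- which is indeed what the linearisation gives --- whereas the characterising limit as written in the statement requires the coefficient $1/\delta$; this normalisation wrinkle is already present in the paper's own proof (which silently drops the factor $A_{k}$), so it is not a gap on your side.
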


\begin{proof}
  We find by the same reasoning as in the proof of
  Theorem~\ref{thm:Lnores}, a unique sequence of irregular singularities
  $\rho_{j}\in\mathcal{R}_{j}$ for $j=2,\dots,k$ above the initial point
  $\rho_{1}$. At the point $\rho_{k}$ the Jacobian $J^{(k)}$ of $Y^{(k)}$
  has a double eigenvalue $\delta$ and hence there exists a unique
  two-dimensional unstable manifold.  It follows from our above analysis of
  the (generalised) eigenvectors that $\rho_{k}$ is a star node for a
  smooth resonance and a one-tangent node for a critical resonance.

  We consider first the smooth case.  Here we can always combine two
  trajectories reaching the node to a two-sided (weak in the case $k=1$)
  generalised solution.  One of these generalised solutions is vertical and
  of no interest.  All the other ones have in $\rho_{k}$ a transversal
  tangent and hence correspond locally to a solution of at least class
  $\mathcal{C}^{k+1}$.  If we apply Lemma \ref{lem:singL} with $q=k+1$,
  then it follows immediately from the definition of a smooth resonance at
  order $k$ that all points in the fibre $(\pi^{k+1}_{k})^{-1}(\rho_{k})$
  are irregular singularities of $\mathcal{R}_{k+1}$.  To each generalised
  solution through $\rho_{k}$ corresponds exactly one of these irregular
  singularities with the value $c_{k+1}$ of its $u^{(k+1)}$-coordinate
  determined by the slope of the tangent of the generalised solution in
  $\rho_{k}$.  The Jacobian $J^{(k+1)}$ of the vector field $Y^{(k+1)}$ has
  a double eigenvalue $0$.  The analysis of the local dynamics is analogous
  to the case $\gamma=0$ treated in Theorem~\ref{thm:gamma0}.  Hence we
  conclude that all these generalised solutions correspond to smooth
  two-sided solutions.  Opposed to the discussion in Remark
  \ref{rem:gamma0}, there is now no need to study the existence of further
  one-sided solutions, as these would have already shown up in our analysis
  at order $k$.

  In the case of a critical resonance, the same combination of two
  trajectories into one generalised solution is possible (for $k=1$ one
  obtains only a weak generalised solution).  All of these (weak)
  generalised solutions possess the same vertical tangent at $\rho_{k}$ and
  thus correspond locally to graphs of prolonged functions which are of
  class $\mathcal{C}^{k}$ at $y$.  Because of the vertical tangent, none of
  these solutions can be of class $\mathcal{C}^{k+1}$ at $y$.  Proceeding
  as in the proof of Theorem \ref{thm:Lnores}, one finds for the
  trajectories of the linearised dynamics after elimination of the curve
  parameter $t$ that
  $u^{(k)}(x)-c_{k}=\tfrac{x-y}{\delta}\ln{\bigl(\tfrac{x-y}{\eta}\bigr)}$
  with a constant $\eta\neq0$.  We have set the second arising constant to
  zero, as then every trajectory is uniquely described by the value of
  $\eta$.  Taking the limit $y\rightarrow x$ and solving for $\eta$ yields
  our claim.
\end{proof}

\begin{example}
  We consider the equation $xu''=(u')^{2}+x-1/4$.  All points of the form
  $(0,u,\pm1/2)$ are impasse points.  At any of them, we find $\delta=1$
  and $\gamma=\pm1$.  Thus at the points $(0,u,-1/2)$ we are in the first
  case of Theorem \ref{thm:Lnores} asserting the existence of a unique
  smooth solution through any of them.  At any of the points $(0,u,1/2)$,
  we find a resonance at order $k=1$.  As the resonance parameter is given
  by $A_{1}=1$, it is always a critical resonance and no solution can be
  twice differentiable at $x=0$.\footnote{Note that the existence of a
    resonance and its order are completely determined by the constant term
    in the equation.  If we consider the slightly more general class of
    equations $xu''=(au')^{2}+x-b^{2}$ with two parameters $a,b>0$, then we
    obtain a resonance at order $k$ if and only if $b=ka/2$.}

  The projected Vessiot distribution defines on $\mathcal{J}_{1}$ the
  dynamical system
  \begin{equation}\label{eq:liangex}
    \dot{x}=x\,,\quad \dot{u}=xv\,,\quad \dot{v}=v^{2}+x-\frac{1}{4}
  \end{equation}
  where we again abbreviated $v=u'$.  Obviously, the first and third
  equation form a closed planar system which can be explicitly integrated
  in terms of (modified) Bessel functions of the first and second kind.
  Elimination of the auxiliary curve parameter $t$ yields
  \begin{displaymath}
    u'(x)=
    \begin{cases}
      \displaystyle 
      \frac{1}{2}-\sqrt{x}\,
          \frac{CY_{0}(2\sqrt{x})+J_{0}(2\sqrt{x})}
          {CY_{1}(2\sqrt{x})+J_{1}(2\sqrt{x})} & x\geq0\\[12pt]
       \displaystyle 
       \frac{1}{2}+\sqrt{-x}\,
          \frac{CK_{0}(2\sqrt{-x})-I_{0}(2\sqrt{-x})}
                 {CK_{1}(2\sqrt{-x})+I_{1}(2\sqrt{-x})} & x\leq0
    \end{cases}
  \end{displaymath}
  with a parameter $C\in\RR$.  Here, one can explicitly verify that none of
  these solutions are twice differentiable at the origin.  $C$ is not
  exactly the parameter appearing in the proof of Theorem \ref{thm:Lres}.
  If one evaluates the limit appearing in this theorem, calling the result
  $\eta$, then in the case that one approaches the origin from the left,
  one obtains $C=2/\bigl(\ln{(-1/\eta)}-2\gamma)$, whereas for an approach
  from the right one finds $C=\pi/(2\gamma+\ln{(\eta)})$ where now $\gamma$
  denotes the Euler constant and is not related to our $\gamma$ above.  In
  any case there is thus a bijective correspondence between the limit
  $\eta$ and the parameter $C$.

  \begin{figure}[ht]
    \centering
    \includegraphics[width=6cm]{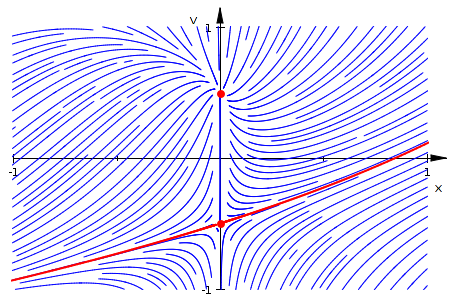}
    \caption{Phase portrait of subsystem of \eqref{eq:liangex}}
    \label{fig:liangex}
  \end{figure}

  Figure \ref{fig:liangex} shows the phase portrait of the closed subsystem
  consisting of the first and the third equation in \eqref{eq:liangex}.
  One sees that the point $(0,-1/2)$ is a saddle point and its unstable
  manifold (shown in red) is the graph of the first derivative to the
  unique solution through $(0,u,-1/2)$ (the solutions for different values
  of $u$ are all parallel to each other and thus have the same derivative).
  The stable manifold is the $v$-axis and hence irrelevant for our
  purposes.  The point $(0,1/2)$ is a one-tangent node.  The eigenvector
  again points in the $v$-direction and thus all trajectories enter the
  node with a vertical tangent.  This implies that none of the
  corresponding solutions of our second-order equation is twice
  differentiable at the origin.
\end{example}

Finally, we comment on the situation that the initial point
$\rho_{1}=(s,c_{0},c_{1})$ is chosen in such a way that $g(s)=0$ but
$f(\rho_{1})\neq0$. It is obvious that no strong, i.\,e.\ at least twice
differentiable solution can exist in this case, as no point
$\rho_{2}\in\mathcal{R}_{2}$ exists with $\pi^{2}_{1}(\rho_{2})=\rho_{1}$.
In principle, it was possible that in this case a unique two-sided solution
existed which is everywhere smooth except at $t=s$ where it is only
$\mathcal{C}^{1}$. Indeed, by our assumptions the vector field $Y^{(1)}$ is
defined everywhere on $\mathcal{J}_{1}$ and does not vanish at such a point
$\rho_{1}$. Hence, there exists a unique weak generalised solution through
$\rho_{1}$.  However, it is easy to see that it runs vertically and hence
is not proper.  Generally, this weak generalised solution will not run
through the entire fibre, as generally for some values $\tau$ we will have
$f(s,c_{0},\tau)=0$ and thus hit a point at which our above analysis
applies.  In this analysis, we mentioned that there are further weak
generalised solutions through our singularity which are, however, not
proper. One of them we have now recovered in a different manner.

\section{Conclusions}

We presented a geometric approach to the analysis of singular initial value
problems of quasi-linear ordinary differential equations.  As sketched in
the first part, it is based on considering a differential equation as a
submanifold of a jet bundle and using the associated contact geometry via
the Vessiot spaces.  We showed in the second part of this work that
quasi-linear equations are special in the sense that their Vessiot
distribution is projectable.  This observation allows for relating the
geometric singularity analysis of fully non-linear implicit equations -- as
e.\,g.\ discussed by Arnold \cite{via:geoode} or Remizov
\cite{aor:poincare} -- with the more analytic approach to singularities of
quasi-linear equations -- as used e.\,g.\ by Rabier \cite{pjr:singular}.
We could show in Proposition \ref{prop:impasse} that not all impasse points
arising in the analysis of a quasi-linear equation stem from a singularity
of the equation.  Hence one indeed needs a special theory for the
quasi-linear case.

In the third part of this article, we gave a detailed geometric analysis of
a special class of second-order initial value problems, namely equations of
the form $g(x)u''=f(x,u,u')$ with initial data prescribed at a simple zero
$y$ of $g$.  The results represented a slight generalisation of those
obtained by Liang \cite{jfl:singivp} with completely different methods.  In
our opinion, our approach makes the appearance of a dichotomy or of a
resonance and the possible existence of solutions with only finite
regularity much more transparent.  All these effects follow immediately
from considering the phase portraits around proper impasse points at
different prolongation orders.  In particular, the analysis is almost
automatic and requires essentially no ingenuity.

The restriction to simple zeros of $g$ is crucial for this simplicity, as
it guarantees that the eigenvalue $\delta=g'(y)$ is always non-zero.
Hence, even in the border case $\gamma=0$ (which was ignored by Liang), we
never encountered a triple eigenvalue $0$.  The situation is quite
different for (generalised) Ginzburg-Landau equations of the form
$x^{2}u''+axu'+bu=f(u)$ for some function $f$ satisfying $f(0)=0$ as
studied by Ignat et al.~\cite{insz:unigl}.  If one analyses such an
equation via our approach, then one faces already in the first step the
problem of studying a stationary point of a three-dimensional system where
the Jacobian has a triple eigenvalue $0$.  If one succeeds here, for
instance by blowing up the initial point, then the remaining analysis
should be quite similar to the one presented here.

Another critical point in our approach is generally the question whether
one is able to determine the eigenvalues and -vectors of all required
Jacobians.  For the class studied here, we obtained triagonal matrices so
that this step was almost trivial posing only the problem of identifying
the relevant eigenvalue.  In a preliminary study of equations of the form
$g(u)u''=f(x,u,u')$ and $g(u')u''=f(x,u,u')$, i.\,e.\ of equations with a
truly non-linear left hand side, it turned out that this step becomes only
a little bit more difficult, but remains solvable at any prolongation
order.  We will present the findings for these two classes of equations
elsewhere.

We assumed throughout this article that we work with smooth functions (only
in Proposition \ref{prop:geosol} and in Remark \ref{rem:hoelder} we
considered the analytic case).  In fact, almost all of our results remain
true with only minor modifications, if we assume that $F$ or $f$ and $g$,
respectively, are only in $\mathcal{C}^{r}$ for some $r\geq2$.  Obviously,
we can now consider prolongations only up to order $r$ and also solutions
can only be guaranteed to be in class $\mathcal{C}^{r}$.  Thus we simply
must replace smooth by $\mathcal{C}^{r}$.

The situation is slightly more complicated for Theorems \ref{thm:Lnores}
and \ref{thm:Lres}.  For the interesting results about finite regularity,
we must assume that $r\geq k$ -- in fact we should have $r>k$.  In the
proofs we needed that the homeomorphism in the Hartman-Grobman theorem is
$\mathcal{C}^{1}$ and used the corresponding statement in \cite{ghr:dhgl}
which, however, requires smoothness of the considered vector field.
Hartman \cite{ph:lochom} showed already much earlier that the homeomorphism
is $\mathcal{C}^{1}$ under weaker conditions, namely when the linear part
defines a contraction and the non-linear part has uniformly Lipschitz
continuous partial derivatives.  We applied the Hartman-Grobman theorem to
the vector field $Y^{(k)}$ around the stationary point $\rho_{k}$.
According to \eqref{eq:Y1}, the field $Y^{(k)}$ depends on $h_{k+1}$ which
by \eqref{eq:hq} is obtained by differentiating $k-1$ times $f$.  If $r>k$,
then $h_{k+1}$ is still at least of class $\mathcal{C}^{2}$ and thus the
coefficients of $Y^{(k)}$ possess the required regularity.  For the
contraction property, we recall that the Jacobian of the reduced dynamics
has the eigenvalues $0<\gamma-(k-1)\delta<\delta$ and thus defines a
contraction, if and only if $\delta<1$.  We may perform a rescaling of the
independent variable $x\mapsto\alpha x$.  Then a simple computation shows
that the parameters $\delta$ and $\gamma$ rescale according to
$\delta\mapsto\delta/\alpha$ and $\gamma\mapsto\gamma/\alpha$.  Thus the
resonance condition is not affected by the rescaling, but we may assume
without loss of generality that $\delta<1$ so that Hartman's result can be
applied.

A referee pointed out that the resonance condition in the here considered
second-order initial value problem could be derived for analytic equations
via the Newton-Puiseux construction of Cano \cite{jc:puiseux}.  This
observation represents an interesting question for future research.  For
the class of problems considered here, it was straightforward to derive the
conditions both for the dichotomy of the existence theory and for the
resonances.  For other initial value problems, this is no longer the case
and the combination of our geometric techniques with such algebraic
approaches may prove very useful here.

\section*{Acknowledgements}

It is a pleasure for us to thank Sebastian Walcher (RWTH Aachen) and Peter
de Maesschalck (Hasselt University) for helpful discussions about the
theory of dynamical systems.  Oscar Saleta Reig (Universitat Aut\`onoma de
Barcelona) helped us with using P4.  We also appreciate the useful comments
of an anonymous referee.  This work was supported by the bilateral project
ANR-17-CE40-0036 and DFG-391322026 SYMBIONT.

\bibliographystyle{elsarticle-num}
\bibliography{QuasiLin}

\begin{thebibliography}{10}
\expandafter\ifx\csname url\endcsname\relax
  \def\url#1{\texttt{#1}}\fi
\expandafter\ifx\csname urlprefix\endcsname\relax\def\urlprefix{URL }\fi
\expandafter\ifx\csname href\endcsname\relax
  \def\href#1#2{#2} \def\path#1{#1}\fi

\bibitem{agv:sing1}
V.~Arnold, S.~Gusejn-Zade, A.~Varchenko, Singularities of Differentiable Maps
  {I}: The Classification of Critical Points, Caustics and Wave Fronts,
  Monographs in Mathematics 82, Birkh\"auser, Boston, 1985.

\bibitem{gg:stable}
M.~Golubitsky, V.~Guillemin, Stable Mappings and Their Singularities, Graduate
  Texts in Mathematics 14, Springer-Verlag, New York, 1973.

\bibitem{ld:singgen}
L.~Dara, Singularit\'es g\'en\'eriques des \'equations diff\'erentielles
  multiformes, Bol.\ Soc.\ Bras.\ Mat. 6 (1975) 95--128.

\bibitem{diis:gensing}
A.~Davydov, G.~Ishikawa, S.~Izumiya, W.-Z. Sun, Generic singularities of
  implicit systems of first order differential equations on the plane, Japan.\
  J.\ Math. 3 (2008) 93--119.

\bibitem{rr:dae}
P.~Rabier, W.~Rheinboldt, Theoretical and numerical analysis of
  differential-algebraic equations, in: P.~Ciarlet, J.~Lions (Eds.), Handbook
  of Numerical Analysis, Vol. VIII, North-Holland, Amsterdam, 2002, pp.
  183--540.

\bibitem{rr:das}
R.~Riaza, Differential-Algebraic Systems, World Scientific, Hackensack, 2008.

\bibitem{gt:singqldae}
G.~Thomas, The problem of defining the singular points of quasi-linear
  differential-algebraic systems, Theor.\ Comp.\ Sci. 187 (1997) 49--79.

\bibitem{jt:sing}
J.~Tuomela, On singular points of quasilinear differential and
  diffe\-ren\-tial-alge\-braic equations, BIT 37 (1997) 968--977.

\bibitem{wms:singbif}
W.~Seiler, Singularities of implicit differential equations and static
  bifurcations, in: V.~Gerdt, W.~Koepf, E.~Mayr, E.~Vorozhtsov (Eds.), Computer
  Algebra in Scientific Computing --- {CASC} 2013, Lecture Notes in Computer
  Science~8136, Springer-Verlag, Cham, 2013, pp. 355--368.

\bibitem{dla:qualplan}
F.~Dumortier, J.~Llibre, J.~Art\'es, Qualitative Theory of Planar Differential
  Systems, Universitext, Springer-Verlag, Berlin, 2006.

\bibitem{jfl:singivp}
J.~Liang, A singular initial value problem and self-similar solutions of a
  nonlinear dissipative wave equation, J.\ Diff.\ Eqs. 246 (2009) 819--844.

\bibitem{hbf:puiseux}
H.~Fine, On the functions defined by differential equations, with an extension
  of the {P}uiseux polygon construction to these equations, Amer.\ J.\ Math. 11
  (1889) 317--328.

\bibitem{hbf:singsol}
H.~Fine, Singular solutions of ordinary differential equations, Amer.\ J.\
  Math. 12 (1890) 295--322.

\bibitem{bruno:local}
A.~Bruno, Local Methods in Nonlinear Differential Equations, Springer-Verlag,
  Berlin, 1989.

\bibitem{jc:puiseux}
J.~Cano, On the series defined by a differential equations, with an extension
  of the {P}uiseux polygon construction to these equations, Analysis 13 (1993)
  103--120.

\bibitem{sf:diss}
S.~Falkensteiner, Power series solutions of {AODEs} --- existence, uniqueness,
  convergence and computation, Ph.D. thesis, Universit\"at Linz, Linz (2020).

\bibitem{lrss:gsade}
M.~Lange-Hegermann, D.~Robertz, W.~Seiler, M.~Sei{\ss}, Singularities of
  algebraic differential equations, Preprint Kassel University
  (arXiv:2002.11597) (2020).

\bibitem{sau:jet}
D.~Saunders, The Geometry of Jet Bundles, London Mathematical Society Lecture
  Notes Series 142, Cambridge University Press, Cambridge, 1989.

\bibitem{wms:invol}
W.~Seiler, Involution --- {T}he Formal Theory of Differential Equations and its
  Applications in Computer Algebra, Algorithms and Computation in
  Mathematics~24, Springer-Verlag, Berlin, 2010.

\bibitem{ss:casc20}
W.~Seiler, M.~Sei\ss, Algebraic and geometric analysis of singularities of
  implicit differential equations, in: F.~Boulier, M.~England, T.~Sadykov,
  E.~Vorozhtsov (Eds.), Computer Algebra and Scientific Computing --- {CASC}
  2020, Lecture Notes in Computer Science, Springer-Verlag, Cham, 2020,
  accepted for publication.

\bibitem{via:geoode}
V.~Arnold, Geometrical Methods in the Theory of Ordinary Differential
  Equations, $2^{nd}$ Edition, Grundlehren der mathematischen Wissenschaften
  250, Springer-Verlag, New York, 1988.

\bibitem{pjr:singular}
P.~Rabier, Implicit differential equations near a singular point, J.\ Math.\
  Anal.\ Appl. 144 (1989) 425--449.

\bibitem{wms:aims}
U.~Kant, W.~Seiler, Singularities in the geometric theory of differential
  equations, in: W.~Feng, Z.~Feng, M.~Grasselli, X.~Lu, S.~Siegmund, J.~Voigt
  (Eds.), Dynamical Systems, Differential Equations and Applications (Proc. 8th
  AIMS Conference, Dresden 2010), Vol.~2, AIMS, 2012, pp. 784--793.

\bibitem{lp:deds}
L.~Perko, Differential Equations and Dynamical Systems, 3rd Edition, Texts in
  Applied Mathematics~7, Springer-Verlag, 2001.

\bibitem{js:cm}
J.~Sijbrand, Properties of center manifolds, Trans.\ AMS 289 (1985) 431--469.

\bibitem{gh:osc}
J.~Guckenheimer, P.~Holmes, Nonlinear Oscillations, Dynamical Systems, and
  Bifurcations of Vector Fields, Applied Mathematical Sciences 42,
  Springer-Verlag, New York, 1990.

\bibitem{ghr:dhgl}
M.~Guysinsky, B.~Hasselblatt, V.~Rayskin, Differentiability of the
  {H}artman-{G}robman linearization, Discr.\ Cont.\ Dyn.\ Sys. 9 (2003)
  979--984.

\bibitem{rf:hoelder}
R.~Fiorenza, H\"older and locally H\"older Continuous Functions, and Open Sets
  of Class $\mathcal{C}^{k}$, $\mathcal{C}^{k}$, Frontiers in Mathematics,
  Birkh\"auser, Cham, 2016.

\bibitem{aor:poincare}
A.~Remizov, Multidimensional {P}oincar{\'e} construction and singularities of
  lifted fields for implicit differential equations, J.\ Math.\ Sci. 151 (2008)
  3561--3602.

\bibitem{insz:unigl}
R.~Ignat, L.~Nguyen, V.~Slastikov, A.~Zarnescu, Uniqueness results for an {ODE}
  related to a generalized {G}inzburg-{L}andau model for liquid crystals, SIAM
  J. Math.\ Anal. 46 (2014) 3390--3425.

\bibitem{ph:lochom}
P.~Hartman, On local homeomorphisms of {E}uclidean spaces, Bol.\ Soc.\ Mat.\
  Mex. 5 (1960) 220--241.

\end{thebibliography}

\end{document}